\newtheorem{theorem}{Theorem}[section]
\newtheorem{proposition}[theorem]{Proposition}
\newtheorem{lemma}[theorem]{Lemma}
\newtheorem{corollary}[theorem]{Corollary}
\theoremstyle{definition}
\newtheorem{definition}[theorem]{Definition}
\newtheorem{assumption}[theorem]{Assumption}
\newtheorem{remark}[theorem]{Remark}
\newtheorem{example}[theorem]{Example}
\newcommand{\R}{\mathbb{R}}
\newcommand{\Z}{\mathbb{Z}}
\newcommand{\E}{\mathbb{E}}
\renewcommand{\S}{\mathbb{S}}
\renewcommand{\P}{\mathbb{P}}
\newcommand{\FF}{\mathbb{F}}
\newcommand{\GG}{\mathbb{G}}
\newcommand{\EP}{ {\mathbb E}^{\mathbb{P}}}
\newcommand{\Filt}{\mathcal{F}}
\newcommand{\Gfilt}{\mathcal{G}}
\newcommand{\Mset}{\mathcal{M}}
\newcommand{\Nset}{\mathcal{N}}
\newcommand{\Borel}{\mathcal{B}}
\renewcommand{\AA}{\mathcal{A}}
\newcommand{\BB}{\mathcal{B}}
\newcommand{\XX}{\mathcal{X}}
\newcommand{\YY}{\mathcal{Y}}
\newcommand{\Pset}{\mathcal{P}}
\newcommand{\VV}{\mathcal{V}}
\let\inf\relax \DeclareMathOperator*\inf{\vphantom{p}inf}
\newcommand\I{\mathds{1}}
\newcommand{\norm}[1]{\left\|#1\right\|}
\newcommand{\as}{\text{a.s.}}
\newcommand{\tr}{\operatorname{tr}}
\newcommand{\Dt}{\mathcal{D}_t}
\newcommand{\Dx}{\nabla_x}
\newcommand{\Dxx}{\nabla_x^2}
\newcommand{\red}[1]{{\color{red}#1}}
\renewcommand{\red}[1]{}
\newcommand{\greg}[1]{{\color{blue}#1}}
\renewcommand{\greg}[1]{}
\title{Path Dependent Optimal Transport and Model Calibration on Exotic Derivatives}
\author[1,2]{Ivan Guo}
\author[1,2]{Gr\'egoire Loeper}
\affil[1]{School of Mathematics, Clayton Campus, Monash University, VIC, 3800, Australia}
\affil[2]{Centre for Quantitative Finance and Investment Strategies\thanks{\textbf{Acknowledgements\quad} The Centre for Quantitative Finance and Investment Strategies has been supported by BNP Paribas.
I. Guo has been partially supported by the Australian Research Council (Grant DP170101227).
The author would also like to thank Ben Goldys and the anonymous referee for their valuable comments and suggestions.
}, Monash University, Australia}
\begin{document}
\maketitle

\begin{abstract}
In this paper, we introduce and develop the theory of semimartingale optimal transport in a path dependent setting. Instead of the classical constraints on marginal distributions, we consider a general framework of path dependent constraints. Duality results are established, representing the solution in terms of path dependent partial differential equations (PPDEs). Moreover, we provide a dimension reduction result based on the new notion of ``semifiltrations'', which identifies appropriate Markovian state variables based on the constraints and the cost function. Our technique is then applied to the exact calibration of volatility models to the prices of general path dependent derivatives.
  
\bigskip
\noindent\textbf{Mathematics Subject Classification (2010):} 60H30,  91G80,  93E20

\noindent\textbf{Keywords:} optimal transport, path dependent PDE, volatility calibration
\end{abstract}

\section{Introduction}

Inspired by the seminal work on optimal transport by Benamou and Brenier \cite{benamou2000computational}, and the duality theory developed in \cite{Br5} and later \cite{loeper2006reconstruction}, we examine the problem of optimal transport by semimartingales in continuous time settings. The {\it semimartingale optimal transport problem} with constraints on marginals at given times has been studied by Tan and Touzi in \cite{tan2013optimal}, extending the work of Mikami and Thieullen \cite{mikami2006duality}. Other related works include \cite{huesmann2019benamou,veraguas2017martingale}. The main goal of our study is to extend this work by considering a much wider range of constraints. As shown in our abstract formulation, the constraints considered can be defined by any arbitrary closed and convex set of probability measures. In particular, this encompasses the  constraints on marginals of the classical optimal transport problem. Furthermore, it can include constraints such as bounds on the distributions as well as expectations of path dependent functions.

One of the outcomes of the duality techniques developed in \cite{Br5, loeper2006reconstruction} is that it bypasses the need to establish the dynamic programming principle, as the Hamilton-Jacobi-Bellman equation arises directly from the dual formulation. Our study establishes a natural connection between this optimal transport problem and the recent theory of path dependent partial differential equations (PPDEs) as developed in \cite{buckdahn2015pathwise,cont2013functional,dupire2009functional,ekren2016viscosity1,ekren2016viscosity2,ren2017comparison}. We also provide a formalisation of the dimension reduction technique used in numerical methods for BSDEs (see e.g., \cite{gobet2005regression,zhang2004numerical}) and path-dependent stochastic control problems (see e.g., \cite{tan2014discrete}) that reduces the complexity of the PPDE into a more tractable PDE by identifying relevant state variables from the cost function and the constraints. This is achieved via the introduction of \emph{semifiltrations}, which facilitates measurability results for both the solution of the optimal transport problem as well as the optimal drift and diffusion characteristics.

Recently, optimal transport has found applications in mathematical finance, particularly in the areas of robust hedging. For example, it is used to obtain model-free bounds for exotics derivatives in \cite{LabTouz} and robust hedging strategies in \cite{dolinsky2014martingale, hou2018robust}. As shown in Remark \ref{remrobusthedging}, the so-called robust pricing hedging duality in continuous time is also a corollary of our duality results. Connections between optimal transport and stochastic portfolio theory \cite{pal2018exponentially} have also been observed. 
In this paper, we focus on applications to model calibration, which is a crucial problem in financial modelling. 
The celebrated Dupire's formula \cite{dupire1994pricing} provides a unique way to recover a local volatility model from the knowledge of vanilla options for all strikes and maturities. However, it requires some form of price interpolation as only a finite number of options are available. Moreover, calibration to more sophisticated (path dependent) products cannot be achieved through this method. Beyond this analytical result, there are few theoretical advances that address the problem of calibration.
 Practitioners therefore often rely on parametric models, which they fine-tune to match observable instruments as much as possible.

The duality theory developed in this paper is used to exactly calibrate to any number of path dependent derivatives, without the need to perform any price interpolation. The idea of using optimal transport to calibrate local volatility models to European options was explored in \cite{guo2019local}, which is an adaptation of the numerical method of \cite{benamou2000computational}. A similar numerical algorithm for discrete option prices was also studied in \cite{avellaneda1997calibrating} in the context of entropy minimisation. In this paper, we extend the approach to the calibration of path dependent derivatives, resulting in a path dependent volatility function, a notion also explored in \cite{guyon2014path}. Despite the complex path dependent nature of the problem, efficient numerical algorithms are still possible via our dimension reduction result, where the optimal volatility function is Markovian in the state variables driving the derivatives. 
This is somewhat analogous to the fact that that European option prices can always be calibrated to local volatility models.
Our method can also be used to refine stochastic volatility models to exactly match a set of given payoffs, while remaining close to a reference stochastic volatility model. This therefore is a generalisation of the calibration of so-called local-stochastic volatility (LSV) models (see \cite{AbergelTachet, guo2019calibration} and the references therein). Recently, our duality results have also been used in \cite{guo2020joint} for the joint calibration of stock and VIX options, which has been an extremely difficulty problem eluding researchers for many years.

The paper is organised as follows. Section 2 introduces the basic notations used throughout the paper as well as the abstract formulation of the problem, including some examples. Section 3 contains the main results and their proofs. A summary of the main results is found in Theorem \ref{thmsummary01}. Section 4 includes the application of our results to volatility calibration, demonstrating numerical examples that calibrate to a large number of European, barrier and lookback options.

\section{Optimal transport under path dependent constraints}\label{sec2}

\subsection{Preliminaries}\label{secprelim}

Let $\Omega:=C([0,1];\R^d)$ be the set of continuous paths, $X$ be the canonical process and $\FF=(\Filt_t)_{0\leq t\leq 1}$ be the canonical filtration generated by $X$. For each $t\in[0,1]$, let $\Omega_t:=\{\omega_{\cdot\wedge t}:\omega\in\Omega\}$ be the set of paths stopped at time $t$ and  let $\Lambda:= \{(t,\omega): t\in[0,1], \omega\in \Omega_t\}$.
Next, let $\Omega^t := \{\omega\in \Omega : \omega_s=0, s\in[0,t]\}$ and $\Lambda^t = \{(u,\omega_{\cdot\wedge u}):u\in[t,1],\ \omega\in \Omega^t\}$ be the set of paths that stay at $0$ on $[0,t]$. 
The spaces $\Omega$, $\Omega_t$ and $\Omega^t$ are equipped the with the norm $\norm{\omega}_\infty=\sup_{t\in[0,1]} |\omega_t|$, while the spaces $\Lambda$ and $\Lambda^t$ are 
equipped with the metric $d_\infty((t,\omega), (t',\omega'))=|t-t'|+\norm{\omega_{\cdot\wedge t'}-\omega'_{\cdot\wedge t}}_\infty$.
Note that we have the relation $\Omega=\Omega_t+\Omega^t$. This also induces a natural isomorphism between $\Omega$ and the product $\Omega\cong \Omega_t \times\Omega^t$, as well as their $\sigma$-algebras, $\Filt_1 \cong \Filt_t \otimes \Borel(\Omega^t)$.


Given a Polish space $\XX$ equipped with its Borel $\sigma$-algebra, let $C(\XX)$ be the set of continuous functions on $\XX$, $C_b(\XX)$ be the set of bounded continuous functions and $\Mset(\XX)$ be the set of signed finite Borel measures on $\XX$.
On $C_b(\XX)$, let $\mathcal{T}_k$ denote the topology of uniform convergence on compact sets of $\XX$. Denote by $\mathcal{T}_t$ the finest locally convex topology on $C_b(\XX)$ which agrees with $\mathcal{T}_k$ on closed balls of $C_b(\XX)$ (via the uniform norm). The topology $\mathcal{T}_t$ was introduced by Le Cam \cite{lecam1957convergence} and is also known as the ``mixed topology'' \cite{fremlin1972bounded} or the ``substrict topology'' \cite{sentilles1972bounded}. For this paper, we will make use the following key result (see, e.g., \cite{fremlin1972bounded,sentilles1972bounded}).
\begin{proposition}
The $\mathcal{T}_t$ dual of $C_b(\XX)$ can be identified with $C_b(\XX)^*=\Mset(\XX)$.
\end{proposition}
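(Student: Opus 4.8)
The plan is to identify the $\mathcal{T}_t$ dual of $C_b(\XX)$ with the space $\Mset(\XX)$ of signed finite Borel measures, and I would do this in two stages: first showing every $\mu \in \Mset(\XX)$ defines a $\mathcal{T}_t$-continuous linear functional, then showing every $\mathcal{T}_t$-continuous linear functional arises this way. For the first direction, given $\mu \in \Mset(\XX)$, the functional $f \mapsto \int_\XX f \, d\mu$ is clearly linear and bounded on $(C_b(\XX), \norm{\cdot}_\infty)$; to get $\mathcal{T}_t$-continuity it suffices, by the defining property of the mixed topology, to check that it is $\mathcal{T}_k$-continuous on each closed ball of $C_b(\XX)$. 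This is where the key regularity input enters: one must invoke that a finite Borel measure on a Polish space is (inner) regular, hence tight, so that for $\varepsilon>0$ there is a compact $K\subseteq\XX$ with $|\mu|(\XX\setminus K)<\varepsilon$; then on a ball of radius $R$, uniform convergence on $K$ together with the uniform bound controls $\int f_n\,d\mu \to \int f\,d\mu$. Thus $\Mset(\XX)$ embeds into the $\mathcal{T}_t$ dual.

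For the converse, suppose $\phi$ is a $\mathcal{T}_t$-continuous linear functional on $C_b(\XX)$. Since $\mathcal{T}_t$ is coarser than (or equal to) the uniform norm topology — indeed $\mathcal{T}_k$ restricted to a ball is coarser than uniform convergence, and $\mathcal{T}_t$ is generated by these — continuity in $\mathcal{T}_t$ implies norm-continuity, so $\phi \in C_b(\XX)^*$ in the ordinary Banach-space sense. The issue is that the full norm-dual of $C_b(\XX)$ is larger than $\Mset(\XX)$ (it contains finitely additive measures / lives on the Stone–Čech compactification). So the real content is to rule out those pathological functionals using the extra $\mathcal{T}_t$-continuity. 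Concretely, I would show $\phi$ is $\sigma$-additive in the appropriate sense: take any bounded sequence $f_n \in C_b(\XX)$ with $f_n \downarrow 0$ pointwise; by Dini-type reasoning this sequence converges to $0$ uniformly on every compact set (while staying norm-bounded), hence $f_n \to 0$ in $\mathcal{T}_t$, so $\phi(f_n)\to 0$. A positive linear functional on $C_b(\XX)$ (Polish) with this order-continuity / $\sigma$-smoothness property is represented by a Radon measure — this is a standard Daniell–Stone or Riesz-type representation argument. Writing a general $\phi$ as a difference of positive functionals (using that $C_b(\XX)$ is a lattice and $\phi$ is norm-bounded, hence has a Jordan-type decomposition) then yields $\phi(f)=\int f\,d\mu$ for some signed $\mu\in\Mset(\XX)$.

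The main obstacle is precisely this second direction: translating the topological hypothesis (continuity for the mixed topology $\mathcal{T}_t$) into the measure-theoretic smoothness condition that forces $\sigma$-additivity and tightness, and then running the representation theorem. The argument that $f_n \downarrow 0$ pointwise with a uniform bound implies $\mathcal{T}_t$-convergence to $0$ requires care — it uses that on a compact set $K$, pointwise monotone convergence of continuous functions to a continuous limit is uniform (Dini), so the sequence is eventually inside any $\mathcal{T}_k$-neighbourhood of $0$ within a fixed ball, which is exactly what a $\mathcal{T}_t$-neighbourhood looks like. Since this proposition is quoted from \cite{fremlin1972bounded,sentilles1972bounded}, I would in practice just cite those references for the full argument and present the above only as the conceptual skeleton; the one-line proof in the paper is almost certainly just a pointer to that literature rather than a self-contained derivation.
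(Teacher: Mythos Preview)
Your proposal is correct and matches the paper's approach exactly: the paper does not prove this proposition at all but simply states it as a quoted result with a citation to \cite{fremlin1972bounded,sentilles1972bounded}, precisely as you anticipate in your final paragraph. Your conceptual sketch (tightness of finite measures on a Polish space for the embedding direction; Dini plus a Daniell--Stone/Riesz representation for the converse) is a sound outline of the standard argument, but it already goes beyond anything the paper itself supplies.
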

\begin{remark}
The choice of topology $\mathcal{T}_t$ will allow the applications of our duality argument in non-locally compact settings, and to avoid the issue of $C_b(\Omega)^*$ being identified with the set of all regular, signed, finite and finitely additive Borel measures (\cite{dunford1958linear} Theorem IV.6.2) under the usual uniform norm topology.
\end{remark}

Let $\Mset_+(\XX)\subset\Mset(\XX)$ denote subset of positive measures. 
For any $\mu\in\Mset(\XX)$, let $L^1(\XX,\mu)$ be the set of $\mu$-integrable functions.
Also let $C_b(\XX; \R^m)$, $\Mset(\XX; \R^m)$, $L^1(\XX, \mu; \R^m)$ and so on be their respective vector valued versions.  In this paper, the typical choices of $\XX$ are $\Omega, \Lambda, \R^m$ as well as their various subspaces.

Let $\Pset$ be the set of Borel probability measures on $(\Omega,\Filt_1)$. For each $s\in[0,1]$, let $\Pset^0_s\subset\Pset$ be a subset of measures such that, for each $\P\in\Pset^0_s$, $X\in \Omega$ is an $(\FF,\P)$-semimartingale on $[s,1]$ given by
\[
X_t=X_s+A^\P_t+M^\P_t,\quad \langle X\rangle_t=\langle M^\P\rangle_t=B^\P_t, \quad \P\text{-}\as, \quad t\in[s,1],
\]
where $M^\P$ is an $(\FF,\P)$-martingale on $[s,1]$ and $(A^\P,B^\P)$ is $\FF$-adapted and $\P$-$\as$ absolutely continuous with respect to time. In particular, $\P$ is said to be have characteristics $(\alpha^\P,\beta^\P)$, which are defined by
\[
\alpha^\P_t=\frac{d A^\P_t}{dt},\quad \beta^\P_t=\frac{d B^\P_t}{dt}.
\]
Note that $(\alpha^\P,\beta^\P)$ is $\FF$-adapted and determined up to $d\P\times dt$, almost everywhere. 
In general, $(\alpha^\P,\beta^\P)$ takes values in the space $\R^{d}\times\S^d_+$. Note that $\S^d, \S^d_+$ and $\S^d_{++}$ denote the sets of symmetric matrices, positive semidefinite matrices and positive definite matrices, respectively. For any $a,b\in \S^d$, let us define $a:b := \tr(a^\intercal b)$.
Denote by $\Pset^1_t\subset \Pset^0_t$ the set of probability measures $\P$ whose characteristics $(\alpha^\P,\beta^\P)$ are $\P$-integrable on the interval $[t,1]$. In other words,
\[
\E^\P\bigg(\int_t^1 |\alpha^\P| +|\beta^\P|\, dt\bigg)< +\infty,
\]
where $|\cdot|$ denotes the $L^1$-norm.

\begin{remark}
One can define $\beta^\P$ for every each path as the quadratic variation of $X$ and it would be compatible to every semimartingale measure $\P\in\Pset^0_0$. However, this choice of $\beta^\P$ is highly pathological. Since, for each $\P$, the characteristic is determined up to $d\P\times dt$, it is usually more practical to work with versions of $\beta^\P$ that have more regularity.
\end{remark}

For each $t\in[0,1]$ and any probability measure $\rho_t$ on $\Omega_t$, define $\Pset_t(\rho_t)\subset\Pset$ to be the set of consistent with $\rho_t$ on $\Omega_t$.
\begin{gather*}
\Pset_t(\rho_t):=\{\P:\P\in\Pset,\ \P\circ X_{\cdot\wedge t}^{-1}=\rho_t\}.
\end{gather*}
Furthermore, let $\Pset^0_t(\rho_t)= \Pset_t(\rho_t) \cap \Pset^0_t$ and $\Pset^1_t(\rho_t)= \Pset_t(\rho_t) \cap \Pset^1_t$.

The notions of path derivatives and functional It\^o calculus were originally introduced in  Dupire \cite{dupire2009functional} and Cont and Fournie \cite{cont2013functional} for c\`adl\`ag paths. Here we choose a version of the definition that focuses on the space of continuous paths, as it is more suitable for the study of semimartingale optimal transport. Specifically, we use a slight variation of the definitions found in \cite{buckdahn2015pathwise,ekren2016viscosity1,ekren2016viscosity2}.

\begin{definition}[Path derivatives and functional It\^o formula]\label{defpathderiv}
For each $t\in[0,1]$, we say $\phi\in C^{1,2}_t(\Lambda)$ if $\phi\in C_b(\Lambda)$ and there exist functions $(\Dt \phi, \Dx \phi, \Dxx \phi)\in C_b(\Lambda;\R\times\R^d\times \S^d)$ such that, for any $\P\in\Pset^1_t$ and $u\in[t,1]$, the following \emph{functional It\^o formula} holds:
\begin{gather*}
\phi(u,X)-\phi(t,X)=\int_t^u \Dt \phi\, dt+ \Dx \phi \cdot dX_t + \frac{1}{2} \Dxx \phi : d\langle X\rangle_t, \quad \P\text{-\as}
\end{gather*}
The functions $\Dt \phi, \Dx \phi, \Dxx \phi$ are known as the time derivative, first order space derivative and second order space derivative of $\phi$, respectively. 
\end{definition}


\begin{remark}
The definition of $C_0^{1,2}(\Lambda)$ in Definition \ref{defpathderiv} is more restrictive than the corresponding version from \cite{ekren2016viscosity1,ekren2016viscosity2} in the following ways. We require the set $\Pset$ to contain all measures with integrable characteristics $(\alpha^\P,\beta^\P)$, as opposed to bounded characteristics. Moreover, we also require the function and its derivatives to be bounded. 
The path derivatives $\Dt \phi, \Dx \phi, \Dxx \phi$ are unique whenever they exist. We refer to \cite{buckdahn2015pathwise,ekren2016viscosity1,ekren2016viscosity2} for more discussions on the different definitions of path derivatives, as well as comparisons to the original definitions of \cite{dupire2009functional, cont2013functional}.
\end{remark}

\subsection{Problem formulation}
Now let us define the semimartingale optimal transport problem under path dependent constraints. 

Denote by $H:\Lambda \times \R^d\times \S^d \to \R\cup \{+\infty\}$ a cost function. Define $H^*:\Lambda \times \R^d\times \S^d \to \R\cup \{+\infty\}$ so that $H^*(t,\omega,\cdot)$ is the convex conjugate of $H(t,\omega,\cdot)$.
When there is no ambiguity, we will simply write $H(\alpha,\beta):=H(t,\omega,\alpha,\beta)$ and $H^*(p,q):=H^*(t,\omega,p,q)$.
We impose the following global assumption on $H$.

\begin{assumption}\label{asscostfunction01}
(i) For each $(t,\omega)\in\Lambda$, $H(t,\omega,\cdot,\cdot)$ is a lower semi-continuous, proper convex function and $\min_{\alpha,\beta} H$ is uniformly bounded.\\
(ii) If $\beta\in \S^d\setminus \S^d_+$ then $H(\cdot,\cdot,\cdot,\beta)=+\infty$.\\
(iii) The cost function $H$ is coercive in the sense that there exist constants $p>1$ and $C>0$ such that
\[
|\alpha|^p+|\beta|^p\leq C(1+H(t,\omega,\alpha,\beta)), \quad \forall\, (t,\omega)\in\Lambda.
\]
\end{assumption}

In order to characterise the constraints, let $\Nset\subseteq\Mset(\Omega)$ be a convex set of measures. 
Assume that $\Nset$ is closed under the weak-* topology, that is, the coarsest topology under which $\mu\to\int_\Omega \psi\,d\mu$ is continuous for all $\psi\in C_b(\Omega)$.
In our problem, we would like to restrict the probability measures to the set $\Nset$.

\begin{definition}
Given $t,\rho_t, H$ and $\Nset$, we define the \emph{semimartingale optimal transport problem under path dependent constraints} to be the following minimisation problem
\begin{gather*}
\inf_{\P\in\Pset^0_t(\rho_t)} \EP \int_t^1 H(\alpha^\P_s,\beta^\P_s) \,ds,\quad
\text{subject to}\quad \P \in \Nset.
\end{gather*}
The problem is said to be \emph{admissible} if $\Pset^0_t(\rho_t)\cap \Nset\neq \emptyset$ and the infimum above is finite.
\end{definition}
To handle the constraint, consider the convex function $F:C_b(\Omega)\to \R \cup\{+\infty\}$ defined by
\begin{align*}
F(\psi)
&:=\sup_{\mu\in \Nset} \int_{\Omega} \psi\,d\mu.
\end{align*}
Since $\Nset$ is closed, the convex conjugate of $F$ has the following representation: for any $\mu\in\Mset(\Omega)$,
\begin{align}\label{eqdeff01}
F^*(\mu)&
=\sup_{\psi\in C_b(\Omega)} \int_{\Omega} \psi\,d\mu - F(\psi)
=
\begin{cases}
0, & \mu\in\Nset,\\
+\infty, & \mu\notin\Nset.
\end{cases}
\end{align}

In many cases, it is possible to further restrict the choice of $\psi$ in the definition of $F^*$ to some convex subset $\Nset^*\subseteq C_b(\Omega)$. This occurs whenever the supremum in \eqref{eqdeff01} is always achieved by the elements of $\Nset^*$, so
\begin{align}\label{eqdeff02}
F^*(\mu)&=\sup_{\psi\in \Nset^*} \int_{\Omega} \psi\,d\mu - F(\psi)
=
\begin{cases}
0, & \mu\in\Nset,\\
+\infty, & \mu\notin\Nset.
\end{cases}
\end{align}
For example, if $\Nset$ is the subspace of $\Mset(\Omega)$ defined by $\Nset=\{\mu: \int_\Omega \psi'\, d\mu=0\}$ where $\psi'\in C_b(\Omega)$ is a fixed function, then we may choose $\Nset^*=\{\lambda\psi':\lambda\in\R\}$. More examples can be found in the next subsection.
In general, suitable choices of $\Nset^*$ cannot always be easily identified. However, when it is possible, the reduction of $C_b(\Omega)$ to $\Nset^*$ can greatly simplify the problem.

The formulation of $F^*$ in \eqref{eqdeff02} indicates that it is, in fact, a suitable function for the penalisation of measures outside $\Nset$.
Hence, the problem can be reformulated as the following saddle point problem:
\begin{align}
&\inf_{\P\in\Pset^0_t(\rho_t)} \EP \int_t^1H(\alpha^\P_s,\beta^\P_s) \,ds,\quad \text{ subject to } \P \in \Nset\nonumber\\
={}&\inf_{\P\in\Pset^0_t(\rho_t)} F^*(\P) + \EP \int_t^1 H(\alpha^\P_s,\beta^\P_s) \,ds\nonumber\\
={}&\inf_{\P\in\Pset^0_t(\rho_t)}\sup_{\psi\in \Nset^*}\E^\P \psi-F(\psi)   +\EP \int_t^1 H(\alpha^\P_s,\beta^\P_s) \,ds.\label{eqdeff11}
\end{align}

\subsection{Examples}\label{secexamples}
The constraint on measures in our formulation is very general and allows a wide range of problems, including many existing formulations of continuous time optimal transport problems in literature. Here are some examples.

\begin{example}[Deterministic optimal transport]\label{examplea05}
If the cost function satisfies
\[
H(\alpha,\beta)=
\begin{cases}
\frac12 |\alpha|^2, &\quad \beta=0,\\
+\infty,&\quad \beta\neq 0,
\end{cases}
\]
then we recover the classical deterministic optimal transport problem of Benamou-Brenier \cite{benamou2000computational}.
\end{example} 


\begin{example}[Semimartingale optimal transport]\label{examplea01} Consider the problem on $t\in[0,1]$ with the initial measure $\rho_0$.
Let $\bar\rho$ be a probability measure on $\R^d$ and consider the time $1$ constraint $\P\circ X_1^{-1}=\bar\rho$. Then by setting 
\[
\Nset=\{\mu\in\Mset(\Omega):\mu\circ X_1^{-1}=\bar\rho\},\  \Nset^*=\{\psi\in C_b(\Omega):\psi=\lambda\circ X_1,\ \lambda\in C_b(\R^d)\},
\]
we recover the semimartingale optimal transport problem studied in \cite{tan2013optimal}. In particular 
\begin{align*}
F(\psi)=\begin{cases}\displaystyle
\int_{\R^d} \lambda\, d\bar\rho, & \lambda\in C_b(\R^d),\  \psi=\lambda\circ X_1,\\
+\infty, & \text{otherwise},
\end{cases}
\end{align*}
and the saddle point problem is given by
\[
\inf_{\P\in\Pset^0_0(\rho_0)}\sup_{\lambda\in C_b(\R^d)} \E^\P (\lambda(X_1))-\int_{\R^d} \lambda \, d\bar\rho  +\EP \int_0^1 H(\alpha^\P_s,\beta^\P_s) \,ds.
\]

Furthermore, if the cost function is given by
$H(t,\omega_{\cdot\wedge t},\alpha,\beta)=H(t,X_t,\alpha)$ if $\beta=I_{d\times d}$, and $+\infty$ otherwise, 
then we recover the stochastic optimal control problem in \cite{mikami2006duality}.
Alternatively, if the cost function is of the form
$H(t,\omega_{\cdot\wedge t},\alpha,\beta)=H(t,X_t,\beta)$ if $\alpha=0$, and $+\infty$ otherwise, 
then we recover the martingale optimal transport problem from \cite{huesmann2019benamou}. Finally, if $d=1$ and $H(\cdot,\beta)=(\sqrt{\beta}-c)^2$ if $\alpha=0$ for some constant $c>0$, and $+\infty$ otherwise, then our formulation is equivalent to the one-dimensional case in \cite{veraguas2017martingale}.
\end{example}

\begin{example}\label{examplea02}
Further generalising the previous example, let $G\in C(\Omega;\R^m)$ be any continuous function and $\bar\rho\in\Mset(\R^m)$ be a target distribution. We would like to impose the constraint $\P \circ G^{-1}=\bar\rho$.
In this case the constraint is characterised by 
\[
\Nset=\{\mu\in\Mset(\Omega):\mu\circ G^{-1}=\bar\rho\},\  \Nset^*=\{\psi\in C_b(\Omega):\psi=\lambda\circ G,\ \lambda\in C_b(\R^{m})\},
\]
and
\begin{align*}
F(\psi)=\begin{cases}\displaystyle
\int_{\R^m} \lambda\, d\bar\rho, & \lambda\in C_b(\R^m),\  \psi=\lambda\circ G,\\
+\infty, & \text{otherwise}.
\end{cases}
\end{align*}
The saddle point problem is given by
\[
\inf_{\P\in\Pset^0_0(\rho_0)}\sup_{\lambda\in C_b(\R^m)} \E^\P (\lambda\circ G)-\int_{\R^d} \lambda \, d\bar\rho  +\EP \int_0^1 H(\alpha^\P_s,\beta^\P_s) \,ds.
\]
\end{example}

\begin{example}\label{examplea03}
Fix $G\in C_b(\Omega;\R^m)$ and consider the constraint $\E^\P G=c$. This corresponds to 
\[
\Nset=\{\mu\in\Mset(\Omega):\int_{\Omega} G \,d\mu = c\},\quad \Nset^*=\{\psi\in C_b(\Omega):\psi=\lambda\cdot G,\ \lambda\in \R^m\}.
\]
In this case,
\begin{align*}
F(\psi)=\begin{cases}\displaystyle
\lambda\cdot c , & \psi=\lambda\cdot G,\ \lambda\in \R^m,\\
+\infty, & \text{otherwise}.
\end{cases}
\end{align*}
Then the saddle point problem is given by
\[
\inf_{\P\in\Pset^0_t(\rho_t)}\sup_{\lambda\in \R^m}  \lambda \cdot (\E^\P G  - c)+\EP \int_t^1 H(\alpha^\P_s,\beta^\P_s) \,ds.
\]
\end{example} 

\begin{example}\label{examplea04}
Let $G:\Omega\to \R^m_+$ be a function that is lower semi-continuous in each component. Consider the constraint $\E^\P G\leq c$ for some $c\in\R^m_+$, where the inequality is taken element-wise. This corresponds to $\Nset=\{\mu\in\Mset_+(\Omega): \int_{\Omega} G \,d\mu \leq c\}$. One can check that this set is in fact closed under the weak-* topology\footnote{Let $\{\mu_i\}\subset \Nset$ be a sequence of measures converging to $\mu$. Let $\{G_j\}\subset C_b(\Omega;\R^m_+)$ be an increasing sequence of functions converging to $G$. Then by Fatou's lemma, $\int_\omega G\, d\mu\leq \sup_j \int_\omega G_j\, d\mu = \sup_j \lim_i \int_\omega G_j\, d\mu_i \leq \limsup_i \int_\omega G\, d\mu_i \leq c$.}. In this case
\begin{align*}
F(\psi)=\sup_{\mu\in \Nset} \int_{\Omega} \psi\,d\mu=
\inf\{\lambda\cdot c: \lambda\in\R^m_+, \ \psi\leq \lambda \cdot G\}.
\end{align*}
Using the density of $C_b(\Omega)$ in $L^1(\Omega,\P)$, the saddle point problem is given by
\begin{align*}
&\inf_{\P\in\Pset^0_t(\rho_t)}\sup_{\substack{ \psi\in C_b(\Omega)\\ \lambda\in\R^m_+,  \psi \leq \lambda \cdot G}}  \E^\P  \psi - \lambda \cdot c +\EP \int_t^1 H(\alpha^\P_s,\beta^\P_s) \,ds\\
={}&\inf_{\P\in\Pset^0_t(\rho_t)}\sup_{\substack{\lambda\in\R^m_+ }}  \lambda \cdot (\E^\P  G - c) +\EP \int_t^1 H(\alpha^\P_s,\beta^\P_s) \,ds.
\end{align*}
Via a suitable translation, the condition $G:\Omega\to \R^m_+$ can be relaxed so that $G$ is bounded from below.
\end{example} 

%

\section{Main results}

\subsection{Summary of main results}

\begin{theorem}\label{thmsummary01}
Let $\Nset\subseteq \Mset(\Omega)$ be a convex subset that is closed with respect to the weak-* topology. Define $F:C_b(\Omega)\to\R\cup\{+\infty\}$ and $F^*:\Mset(\Omega)\to\R\cup\{+\infty\}$ by
\begin{align}
F(\psi)&:=\sup_{\mu\in\Nset} \int_\Omega \psi\, d\mu,\nonumber\\
F^*(\mu)&:=\sup_{\psi\in C_b(\Omega)} \int_{\Omega} \psi\,d\mu - F(\psi)
=
\begin{cases}
0, & \mu\in\Nset,\\
+\infty, & \mu\notin\Nset.
\end{cases}\label{eqsummarya02}
\end{align}
Let $H:\Lambda\times \R^d\times\S^d \to \R\cup\{+\infty\}$ be a function satisfying Assumption \ref{asscostfunction01} and let $H^*(t,\omega,\cdot)$ denote the convex conjugates of $H(t,\omega,\cdot)$.

Given $t\in[0,1]$ and a probability measure $\rho_t$ on $\Omega_t$, recall that the semimartingale optimal transport problem with path dependent constraints refers to the following minimisation problem,
\begin{align}\label{eqsummary02}
V:=
\inf_{\P\in\Pset^0_t(\rho_t)}F^*(\P)+\EP \int_t^1 H(\alpha^\P_s,\beta^\P_s) \,ds.
\end{align}

\noindent (i) \textbf{Duality:} If the problem is admissible, then the infimum in \eqref{eqsummary02} is attained and it equals
\begin{align}\label{eqsummary03}
V=\VV:=\sup_{\psi\in C_b(\Omega)} -F(\psi) - J_{\psi,H}(t,\rho_t),
\end{align}
where $J$ is given by
\begin{align}
J_{\psi,H}(t,\rho_t)&:=\sup_{\P\in\Pset^0_t(\rho_t)} -\E^\P \psi-\EP \int_t^1 H(\alpha^\P_s,\beta^\P_s) \,ds\nonumber\\
&=\inf_{\phi\in C^{1,2}_t(\Lambda)} \int_{\Omega_t} \phi(t,\cdot)\, d\rho_t,\label{eqsummary042}\\
\text{subject to}\quad &
\phi(1,\cdot)\geq -\psi \quad \text{and}  \quad \Dt\phi + H^*\left(\Dx\phi, \frac{1}{2}\Dxx\phi\right)\leq 0.\nonumber
\end{align}
Let $\delta_{\omega_{\cdot\wedge t}}$ denote the Dirac measure with a singular mass at $\omega_{\cdot\wedge t}$. Then the function $J_{\psi,H}$ also satisfies 
\begin{align}
J_{\psi,H}(t,\rho_t)&=\int_{\Omega_t} J_{\psi,H}(t,\delta_{\omega_{\cdot\wedge t}})\, d\rho_t.
\end{align}
Moreover, if the set $C_b(\Omega)$ can be replaced by a convex subset $\Nset^*\subset C_b(\Omega)$ in \eqref{eqsummarya02}, then the same replacement can be made in \eqref{eqsummary03}.

\noindent (ii) \textbf{Optimal solution:}
Suppose that $\tilde \P$ is an optimal probability measure for the optimal transport problem and has characteristics $(\tilde \alpha, \tilde \beta)$ on $[t,1]$.
Let $(\psi^n, \phi^n)_{n\in\Z^+}$ be an optimising sequence of \eqref{eqsummary03} and \eqref{eqsummary042}. Then we have the following convergences in probability on $\Omega\times[t,1]$:
\begin{gather*}
\Dt\phi^n + H^*\left(\Dx\phi^n, \frac{1}{2}\Dxx\phi^n\right) \stackrel{d\tilde\P\times dt}{\to} 0,\quad
\phi^n+\psi^n \stackrel{d\tilde\P}{\to} 0.
\end{gather*}
Moreover, suppose that $H$ is strongly convex, i.e., there exists a constant $C>0$ such that for all $t, \omega, \alpha, \beta, \alpha', \beta'$ and any subderivative $\nabla H$, if $H(\alpha, \beta)$ is finite then
\[
H(\alpha', \beta')\geq H(\alpha, \beta) + \langle\nabla H(\alpha, \beta) , (\alpha'-\alpha, \beta'-\beta)\rangle + C (\norm{\alpha'-\alpha}^2+\norm{\beta'-\beta}^2),
\]
where $\norm{\cdot}$ is the $L^2$ norm on $\R^d$ and $\S^d$. Then the following holds on $\Omega\times[t,1]$,
\begin{gather*}
\nabla H^*\left(\Dx\phi^n, \frac{1}{2}\Dxx\phi^n\right) \stackrel{d\tilde\P\times dt}{\to} (\tilde \alpha, \tilde \beta).
\end{gather*}

\noindent (iii) \textbf{PPDE characterisation:}
Suppose that Assumptions \ref{asscostfunction01} and \ref{asscostfuncionbb} are satisfied. Then $J_{\psi,H}$ has the representation
\begin{align}
J_{\psi,H}(t,\rho_t)=\sup_{\P\in\Pset^0_t(\rho_t)} -\E^\P \psi-\EP \int_t^1 H(\alpha^\P_s,\beta^\P_s) \,ds=\int_{\Omega_t} \phi_{\psi,H}(t,\cdot)\, d\rho_t.
\end{align}
where $\phi_{\psi,H}$ is a viscosity solution of the following path dependent PDE:
\begin{gather}\label{eqmainresppde}
\phi(1,\cdot)= -\psi \quad \text{and}  \quad \Dt\phi + H^*\left(\Dx\phi, \frac{1}{2}\Dxx\phi\right)= 0.
\end{gather}
Moreover, if Assumption \ref{asscostfuncioncc} is also satisfied, then $\phi_{\psi,H}$ is the unique viscosity solution of the PPDE \eqref{eqmainresppde}.


\noindent (iv) \textbf{Dimension reduction:}
Fix $t\in[0,1]$ and $\psi\in C_b(\Omega)$. Suppose that Assumptions \ref{asscostfunction01} and \ref{asscostfuncionbb} are satisfied.  Let $\GG=\{\Gfilt_t,t\in[0,1]\}$ be a semifiltration (see Definition \ref{deflocfilt}) such that $(\psi,H)$ is adapted to $\GG$. Then the map $J(t,\cdot):\Omega \to \R$ defined by
\[
J(t,x):=J_{\psi,H}(t,\delta(\omega_{\cdot\wedge t}=x_{\cdot\wedge t}))
\]
is $\Gfilt_t$-measurable. 

Moreover, if $\phi\in C^{1,2}_0(\Lambda)$ is a classical solution of the PPDE \eqref{eqmainresppde} and $H$ is strictly convex in $(\alpha,\beta)$, then $(\alpha_t,\beta_t)= \nabla H^*(\Dx\phi(t,\cdot), \frac{1}{2}\Dxx\phi(t,\cdot))$ is also $\Gfilt_t$-measurable. 
%
\end{theorem}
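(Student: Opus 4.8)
The plan is to treat part (i) as the engine and obtain (ii)--(iv) as corollaries. For (i), I would start from the saddle-point form \eqref{eqdeff11}, $\inf_{\P}\sup_{\psi}\,\E^\P\psi-F(\psi)+\EP\int_t^1 H\,ds$, and justify exchanging the infimum over $\P\in\Pset^0_t(\rho_t)$ with the supremum over $\psi\in C_b(\Omega)$ (or $\Nset^*$). Three ingredients make the exchange work. First, compactness: by the coercivity Assumption~\ref{asscostfunction01}(iii) any minimizing sequence has uniformly $L^p$-bounded characteristics on $[t,1]$, which forces tightness of the laws on $\Omega$ and hence weak-* relative compactness of the relevant sublevel set of measures; this also delivers attainment of the infimum in \eqref{eqsummary02}. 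Second, lower semicontinuity of $\P\mapsto\EP\int_t^1 H(\alpha^\P_s,\beta^\P_s)\,ds$ along weakly convergent sequences, which follows from lower semicontinuity and convexity of $H$ in $(\alpha,\beta)$ together with Jensen's inequality; here the custom topology $\mathcal T_t$ is essential so that $C_b(\Omega)^*=\Mset(\Omega)$ and a Fenchel--Rockafellar/Sion minimax theorem applies in this non-locally-compact path space. This yields $V=\sup_\psi -F(\psi)-J_{\psi,H}(t,\rho_t)$ with $J_{\psi,H}(t,\rho_t)=\sup_\P[-\E^\P\psi-\EP\int_t^1 H\,ds]$. To identify $J_{\psi,H}$ with the PDE dual \eqref{eqsummary042}, the inequality $J_{\psi,H}\le\inf_\phi\int_{\Omega_t}\phi(t,\cdot)\,d\rho_t$ is a verification argument: for admissible $\phi\in C^{1,2}_t(\Lambda)$, apply the functional It\^o formula of Definition~\ref{defpathderiv} and the Fenchel inequality $\Dx\phi\cdot\alpha+\tfrac12\Dxx\phi:\beta-H(\alpha,\beta)\le H^*(\Dx\phi,\tfrac12\Dxx\phi)\le-\Dt\phi$, then use $\phi(1,\cdot)\ge-\psi$. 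The reverse inequality is the delicate half: construct a near-optimal supersolution via a dynamic-programming argument for $J_{\psi,H}$ and a mollification to recover $C^{1,2}_t$-regularity. Finally, the linearity identity $J_{\psi,H}(t,\rho_t)=\int_{\Omega_t}J_{\psi,H}(t,\delta_{\omega_{\cdot\wedge t}})\,d\rho_t$ comes from disintegrating $\P$ over $X_{\cdot\wedge t}$ and gluing conditional near-optimizers via a measurable selection.

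Part (ii) is complementary slackness. Given $V=\VV$, the optimal $\tilde\P$ and an optimizing sequence $(\psi^n,\phi^n)$ of \eqref{eqsummary03}--\eqref{eqsummary042}, I would insert $(\psi^n,\phi^n)$ into the verification chain of (i) evaluated at $\tilde\P$: each inequality there carries a nonnegative slack (the terminal slack $\phi^n(1,\cdot)+\psi^n$, the HJB slack $-\Dt\phi^n-H^*(\Dx\phi^n,\tfrac12\Dxx\phi^n)$, and the pointwise Fenchel gap), and their $\E^{\tilde\P}$- and $d\tilde\P\times dt$-integrals sum to $\VV-(-\E^{\tilde\P}\psi^n-\EP\int_t^1 H\,ds)\to 0$. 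Since each slack is nonnegative, each converges to $0$ in $L^1$ and hence in probability, which is the stated convergence. Under strong convexity of $H$, the Fenchel gap $H(\tilde\alpha,\tilde\beta)+H^*(\Dx\phi^n,\tfrac12\Dxx\phi^n)-\langle(\Dx\phi^n,\tfrac12\Dxx\phi^n),(\tilde\alpha,\tilde\beta)\rangle$ dominates $C\,\|\nabla H^*(\Dx\phi^n,\tfrac12\Dxx\phi^n)-(\tilde\alpha,\tilde\beta)\|^2$ by the standard fact that the Bregman divergence of a $C$-strongly convex function controls the squared distance; so this gap vanishing in $d\tilde\P\times dt$ gives the last convergence.

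For part (iii), set $\phi_{\psi,H}(t,\omega):=J_{\psi,H}(t,\delta_{\omega_{\cdot\wedge t}})$, so the integral representation is exactly the linearity identity of (i). To show $\phi_{\psi,H}$ solves \eqref{eqmainresppde} in the viscosity sense, I would establish the dynamic programming principle for $J_{\psi,H}$ --- again via conditioning and flow of the control problem plus measurable selection, using Assumption~\ref{asscostfuncionbb} for the continuity/growth needed to pass limits --- and then derive the viscosity sub- and supersolution inequalities in the sense of \cite{ekren2016viscosity1,ekren2016viscosity2}; the terminal condition $\phi_{\psi,H}(1,\cdot)=-\psi$ is immediate. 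Uniqueness is the comparison principle for this class of PPDEs, which holds under the additional Assumption~\ref{asscostfuncioncc}.

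Part (iv) is the novel piece and, I expect, the main obstacle. Write $J(t,x)=\sup_\P[-\E^\P\psi-\EP\int_t^1 H\,ds]$ over semimartingale measures started from the path $x_{\cdot\wedge t}$. The key structural input is Definition~\ref{deflocfilt}: a semifiltration $\GG$ should be tailored so that any $\P\in\Pset^0_t$ can be projected onto a $\P'$ whose characteristics are $\GG$-adapted and which agrees with $\P$ on the $\GG$-information, so that the supremum defining $J$ is unchanged if restricted to $\GG$-adapted controls; proving this richness (the analogue of the disintegration step in (i), now in the $\GG$-reduced filtration, and likely proved by the same conditioning plus measurable selection) is the technical heart. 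Since $(\psi,H)$ is $\GG$-adapted, the objective $-\E^\P\psi-\EP\int_t^1 H\,ds$ depends on $\P$ only through its $\GG$-law; running $\GG$-adapted dynamics from two initial paths $x,x'$ with $\Gfilt_t$-equal content produces the same $\GG$-law, hence the same value, so $J(t,x)=J(t,x')$ and $J(t,\cdot)$ is $\Gfilt_t$-measurable. For the ``moreover'', if $\phi\in C^{1,2}_0(\Lambda)$ is the classical solution then $\phi(t,\omega)=J(t,\omega)$ is $\Gfilt_t$-measurable by the previous step, its path derivatives $\Dx\phi,\Dxx\phi$ are $\GG$-adapted because differentiating a $\GG$-adapted functional only sees $\GG$-information, and since $H$, hence $H^*$ and $\nabla H^*$, are $\GG$-adapted, strict convexity makes $(\alpha_t,\beta_t)=\nabla H^*(\Dx\phi,\tfrac12\Dxx\phi)$ the unique maximizer, which is therefore $\Gfilt_t$-measurable. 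The hardest point throughout (iv) is the projection/richness claim for $\GG$-adapted semimartingale measures; the near-optimal-supersolution construction in (i) and the lower semicontinuity/minimax arguments in the non-locally-compact setting are the other places requiring care.
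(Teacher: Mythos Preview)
Your treatment of parts (ii) and (iii) matches the paper: (ii) is complementary slackness exactly as you describe, and (iii) is the standard HJB-from-DPP argument (the paper cites \cite{ekren2016viscosity1}, Proposition~4.7) followed by the comparison principle of \cite{ekren2016viscosity2}.

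For part (i), your route differs substantively from the paper's. You propose to first get $V=\sup_\psi -F(\psi)-J_{\psi,H}$ by a minimax on $(\P,\psi)$ using compactness/lsc, and then identify $J_{\psi,H}$ with the $\inf_\phi$ via verification plus a DPP--mollification construction of near-optimal $C^{1,2}_t$ supersolutions. The paper does neither step: instead it encodes the condition $\P\in\Pset^1_0(\rho_0)$ by the functional It\^o identity (Lemma~\ref{lemmeasureconstraint1}), lifts the problem to measures $(\mu,\nu,\bar\nu,\tilde\nu)$ on $\Omega$ and $\Lambda$, and applies Fenchel--Rockafellar once on the pairing between $(\psi,\phi_0,\phi_1,\Dt\phi,\Dx\phi,\tfrac12\Dxx\phi)$ and those measures. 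The PPDE constraints on $\phi$ then fall out directly as the domain of the function $a$, so the hard inequality $J_{\psi,H}\ge\inf_\phi\int\phi(t,\cdot)\,d\rho_t$ is a byproduct of duality and \emph{no} dynamic programming or smoothing is needed (the paper remarks explicitly that DPP is a corollary, not an input). Your alternative is in the spirit of \cite{tan2013optimal}, and the compactness/lsc half is fine; but the step ``mollification to recover $C^{1,2}_t$-regularity'' is a genuine gap in the path-dependent setting---there is no obvious convolution on $\Lambda$ that preserves the supersolution property for a fully nonlinear $H^*$, and you give no mechanism for producing $C^{1,2}_t$ test functions from the value function.

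For part (iv), your proposed mechanism is not the paper's and carries a real risk. You aim to project an arbitrary $\P$ onto a $\P'$ with $\GG$-adapted characteristics and argue the objective is unchanged; but nothing in Definition~\ref{deflocfilt} guarantees such a projection exists or preserves the cost, and you yourself flag this as the hardest point. The paper avoids this entirely. It first translates the starting point, writing $J(t,x)=\sup_{\P\in\Pset^0_t(\delta_{t,0})}\big[-\E^\P\psi(x_{\cdot\wedge t}+\cdot)-\EP\int_t^1 H(s,x_{\cdot\wedge t}+\cdot,v^\P_s)\,ds\big]$, so the dependence on $x$ is isolated in the \emph{data} $(\psi,H)$. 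Then a purely measure-theoretic lemma (Lemma~\ref{lemlocalfilt}) shows that the map $\omega\mapsto(\psi(\omega_{\cdot\wedge t}+\cdot),H(\cdot,\omega_{\cdot\wedge t}+\cdot,\cdot))$ is $\Gfilt_t$-measurable whenever $(\psi,H)$ is $(\Gfilt_t\vee\Filt_{[t,1]})$-measurable---which is exactly what the semifiltration axiom $\Gfilt_u\subseteq\Gfilt_t\vee\Filt_{[t,u]}$ buys. Composing with the continuous functional $(\psi,H)\mapsto\sup_\P[\cdots]$ gives $\Gfilt_t$-measurability of $J(t,\cdot)$. For the derivatives, the paper does not argue that ``differentiating a $\GG$-adapted functional only sees $\GG$-information'' (which would need justification); instead it recovers $\Dt\phi,\Dx\phi,\Dxx\phi$ at $(t,x)$ from the infinitesimal generator $\lim_{u\searrow t}(u-t)^{-1}(\E^\P\phi(u,X)-\phi(t,x))$ under constant-characteristic measures, each of which is $\Gfilt_t$-measurable by the same lemma, and then varies $(g,h)$ to separate the derivatives.
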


\begin{remark}
By dimension reduction, we mean the following. In general, the optimal $\alpha^\P, \beta^\P$ are path dependent, $\FF$-adapted processes. For practical applications, this is not very helpful as path dependent functions are difficult to compute in general. However, in many problems, we can show that the solution in fact only depends on a few state variables which can be usually identified from the constraint and the cost function. In essence, $\Gfilt_t$ is generated by these state variables at time $t$.

For instance, consider Example \ref{examplea01} (also found in \cite{tan2013optimal}), where the constraints are on the initial density of $X_0$ and the final density of $X_1$, while the cost function at time $t$ is not path dependent and only depends on $X_t$. In this case, the optimal $\alpha^\P, \beta^\P$ at time $t$ only depend on $(t,X_t)$. So it suffices to solve a finite dimensional classical PDE, as opposed to an infinite dimensional PPDE.

Consider another example, where the constraint is on the expectation of $f(X_{t_1}, X_{t_2})$ for some fixed $t_1<t_2$ (for financial applications this would correspond to a so-called cliquet or ratchet options), then the optimal $\alpha^\P, \beta^\P$ would simply depend on $(t,X_t)$ for $t\leq t_1$ and $(X_{t_1},t,X_t)$ for $t_1< t\leq t_2$.
\end{remark}

\begin{remark}
Theorem \ref{thmsummary01} is a generalisation of many existing continuous time optimal transport results (see the examples in Subsection \ref{secexamples}). Here we will further highlight some technical differences between our technique and other works.

There are some similarities between our approach and \cite{huesmann2019benamou}, since both works make use of the Fenchel-Rockafellar duality theorem \ref{thmfenchelrockafellar}. In \cite{huesmann2019benamou} the problem is posed in Markovian settings and restricted to martingales. The duality theorem is applied first to a compact domain, then extended to an unbounded domain via verification arguments. On the other hand, our formulation is in a non-Markovian, semimartingale setting with more general constraints. The lack of local compactness leads to additional technical challenges in the formulation of the underlying topological spaces and the application of the duality result. Unlike \cite{huesmann2019benamou}, we cannot encode our semimartingale condition in the form of a Fokker-Planck equation. Instead, we identify semimartingale measures via the functional It\^o formula in the form of Lemma \ref{lemmeasureconstraint1}.

Our work also generalises the results of \cite{tan2013optimal} to non-Markovian constraints. The approach of \cite{tan2013optimal} involves proving the convexity and lower-semicontinuity of the objective function and applying the Fenchel-Moreau theorem on the measure-function duality pairing. The resulting PDE representation is derived via classical measurable selection and dynamic programming arguments. Our approach uses the Fenchel-Rockafellar duality theorem on the function-measure duality pairing and directly obtains the dual problem in terms of path dependent PDEs while bypassing dynamic programming. In fact, our duality result would also implies the dynamic programming principle without needing to invoke measurable selection arguments.
\end{remark}

To simplify our notations, the proof of the main duality result will focus on the case $t=0$. In other words, this corresponds to the class of optimal transport problem starting at time 0 with initial distribution $\rho_0$. The general case can be dealt with using similar arguments.

\subsection{Characterising suitable measures}
The function $F^*$ was used to penalise measures outside of $\Nset$, a constraint of the problem. In this subsection, we aim to find a suitable function that penalises measures outside of $\Pset^1_0(\rho_0)$, the set of semimartingale measures.

A key step in our argument is to extend measures in $\Mset(\Omega)$ to measures on the stopped paths $\Mset(\Lambda)$, then to utilise the Fenchel-Rockafellar duality theorem \ref{thmfenchelrockafellar} (see, e.g., \cite{rockafellar1966extension}) on that space. The following lemma provides a condition for the identification of measures in $\Pset^1_0(\rho_0)$ as well as a suitable corresponding measure in $\Mset(\Lambda)$.
\begin{lemma}\label{lemmeasureconstraint1}
Let $\rho_0$ be a probability measure on $\Omega_0$. Suppose that $\mu\in\Mset_+(\Omega)$ and induces the measure $\hat\mu\in\Mset_+(\Lambda)$ via
\[
\hat\mu(E) = \int_\Omega \int_0^1 \I((t,\omega_{\cdot\wedge t})\in E) \, dt d\mu, \quad \forall\,E\subset\Lambda.
\]
Let $\nu\in\Mset_+(\Lambda)$ and $(\alpha,\beta)\in L^1(\Lambda,\nu;\R^d\times\S^d)$.

The equality
\begin{gather}\nonumber
\int_{\Omega} \phi(1,\cdot) \,d\mu- \int_{\Omega_0} \phi(0,\cdot) \,d\rho_0 = \int_{\Lambda} \Dt \phi + \alpha \cdot \Dx \phi + \frac{1}{2}\beta : \Dxx\phi \,d\nu
\end{gather}
holds for all $\phi\in C^{1,2}_0(\Lambda)$ if and only if $\hat\mu = \nu$ and $\mu\in\Pset^1_0(\rho_0)$ has characteristics $(\alpha,\beta)$.


\end{lemma}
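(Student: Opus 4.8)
The plan is to prove the two implications separately, with the "if" direction following directly from the functional Itô formula and the "only if" direction being the substantive part. First I would establish the easy direction: suppose $\mu\in\Pset^1_0(\rho_0)$ has characteristics $(\alpha,\beta)$ and $\hat\mu=\nu$. Fix $\phi\in C^{1,2}_0(\Lambda)$. Since $\mu$ is a semimartingale measure with integrable characteristics, $\mu\in\Pset^1_0$, so Definition \ref{defpathderiv} applies: for $\mu$-a.e.\ path, $\phi(1,X)-\phi(0,X)=\int_0^1 \Dt\phi\,dt+\int_0^1\Dx\phi\cdot dX_t+\frac12\int_0^1\Dxx\phi:d\langle X\rangle_t$. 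Taking $\E^\mu$, the stochastic integral $\int_0^1 \Dx\phi\cdot dM^\mu_t$ has zero expectation (its integrand is bounded and $M^\mu$ is a true martingale with integrable quadratic variation), while $dX_t = \alpha_t\,dt + dM^\mu_t$ and $d\langle X\rangle_t=\beta_t\,dt$. Hence $\E^\mu\phi(1,X)-\E^\mu\phi(0,X)=\E^\mu\int_0^1(\Dt\phi+\alpha\cdot\Dx\phi+\frac12\beta:\Dxx\phi)\,dt$, and since $\phi(0,\cdot)$ depends only on $X_{\cdot\wedge 0}$ which has law $\rho_0$ under $\mu$, the left side is $\int_\Omega\phi(1,\cdot)\,d\mu-\int_{\Omega_0}\phi(0,\cdot)\,d\rho_0$. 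The right side, by the definition of $\hat\mu$ and Fubini, equals $\int_\Lambda(\Dt\phi+\alpha\cdot\Dx\phi+\frac12\beta:\Dxx\phi)\,d\hat\mu=\int_\Lambda(\cdots)\,d\nu$. This gives the stated identity.

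For the converse, suppose the identity holds for all $\phi\in C^{1,2}_0(\Lambda)$. The first step is to recover $\hat\mu=\nu$ and $\mu\circ X_{\cdot\wedge 0}^{-1}=\rho_0$ by testing against suitable $\phi$. Taking $\phi$ of the form $\phi(t,\omega)=g(\omega_{\cdot\wedge 0})$ (no time or space derivatives) forces $\int_\Omega g\,d\mu=\int_{\Omega_0}g\,d\rho_0$, so $\mu$ is a probability measure with the correct initial marginal. Next, taking $\phi(t,\omega)=f(t)h(\omega)$ for appropriate $f\in C^1([0,1])$, $h\in C_b$ with controlled derivatives — more carefully, cylindrical test functions depending on finitely many time points — one separates the "$dt$ part against $\nu$" from the "$dt$ part against $\hat\mu$" and concludes $\nu=\hat\mu$; the key observation is that the left-hand side of the identity only sees $\mu$ through its values at times $0$ and $1$, while the bulk term is a $\nu$-integral, so any purely-$\Dt\phi$ test function with $\phi(1,\cdot)=\phi(0,\cdot)=0$ probes $\nu-\hat\mu$. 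Then I would fix an arbitrary $(t_0,\omega^0)$ and run the It\^o formula backwards: for a general $\phi\in C^{1,2}_0(\Lambda)$, define the process $Y^\phi_t:=\phi(t,X)-\phi(0,X)-\int_0^t(\Dt\phi+\alpha\cdot\Dx\phi+\frac12\beta:\Dxx\phi)\,ds$; using the identity applied to $\phi$ localized/stopped appropriately (here one needs the identity not just at the endpoints $0,1$ but along a family of stopping times or conditional versions), one shows $Y^\phi$ is a $\mu$-martingale with the correct bracket, which is exactly the statement that $X$ is an $(\FF,\mu)$-semimartingale with decomposition $X_t=X_0+\int_0^t\alpha\,ds+M^\mu_t$ and $\langle X\rangle_t=\int_0^t\beta\,ds$, i.e.\ $\mu\in\Pset^1_0$ with characteristics $(\alpha,\beta)$; integrability of $(\alpha,\beta)$ against $\nu=\hat\mu$ translates precisely into $\E^\mu\int_0^1(|\alpha|+|\beta|)\,ds<\infty$, giving $\mu\in\Pset^1_0(\rho_0)$.

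The main obstacle is the last step: the hypothesis only gives the integral identity at the fixed endpoints $t=0,1$, whereas to conclude the \emph{martingale} property of $Y^\phi$ (rather than just $\E^\mu Y^\phi_1=0$) one must upgrade it to a conditional/localized statement. The standard route is a monotone-class or functional-analytic argument: test against $\phi$ of the form $\phi(t,\omega)=\phi_1(t,\omega)\,g(\omega_{\cdot\wedge s})$ for $s<1$ and $g\in C_b(\Omega_s)$, exploiting that the product of a $C^{1,2}_0$ function with an $\Filt_s$-measurable bounded continuous cylinder function (constant in the relevant variables up to time $s$) stays in a class to which the identity applies, thereby extracting $\E^\mu[Y^\phi_1 - Y^\phi_s \mid \Filt_s]=0$ for a rich enough family of $\phi$; combined with a density argument in $C^{1,2}_0(\Lambda)$ and the coercivity/integrability bounds from Assumption \ref{asscostfunction01}, this pins down the semimartingale characteristics $d\mu\times dt$-a.e. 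One must be careful that the class $C^{1,2}_0(\Lambda)$ is rich enough to separate measures on $\Lambda$ and to localize in time — this is where the specific (bounded, continuous-path) formulation of Definition \ref{defpathderiv} and the isomorphism $\Filt_1\cong\Filt_t\otimes\Borel(\Omega^t)$ from the preliminaries are used.
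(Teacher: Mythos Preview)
Your overall architecture matches the paper's: the ``if'' direction is immediate from the functional It\^o formula, and for the converse you (a) recover the initial law, (b) identify $\nu=\hat\mu$ via test functions with only a time derivative, and (c) extract the semimartingale decomposition by multiplying test functions by $\Filt_u$-measurable factors $g(X_{\cdot\wedge u})$. Two points need correction.

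First, a minor slip in step (b): you say one should use ``purely-$\Dt\phi$ test functions with $\phi(1,\cdot)=\phi(0,\cdot)=0$''. That is the wrong constraint. The paper takes $\phi(t,X)=\int_0^t f(s,X)\,ds$ for an arbitrary $f\in C_b(\Lambda)$, so $\phi(0,\cdot)=0$ but $\phi(1,\cdot)=\int_0^1 f\,ds$ is \emph{not} zero; its $\mu$-integral is precisely $\int_\Lambda f\,d\hat\mu$ by Fubini and the definition of $\hat\mu$. Comparing with the right-hand side $\int_\Lambda f\,d\nu$ then yields $\nu=\hat\mu$. If you force $\phi(1,\cdot)=0$, you only learn $\int_\Lambda f\,d\nu=0$ for $f$ with vanishing time average, which is not enough.

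Second, and this is the substantive gap: in step (c) you never confront the boundedness constraint in $C^{1,2}_0(\Lambda)$. To show that $M:=X-\int_0^\cdot\alpha\,ds$ is a martingale you need test functions that are essentially \emph{linear} in $X_t-X_u$, and to identify $\langle X\rangle$ you need ones that are \emph{quadratic}; neither is bounded, so neither lies in $C^{1,2}_0(\Lambda)$. The paper's proof is largely devoted to handling this: it introduces smooth truncations $K^n(x)\to x$ and a mollified indicator $I^n(t)\to\I(t>u)$, uses $\phi^n(t,X)=P^n(X_t-X_u)I^n(t)$ with $P^n\to\sqrt{1+|x|^2}$ first to establish the integrability $\E^\mu|X_1-X_u|<\infty$ (without which the subsequent dominated-convergence arguments fail), and only then passes to the limit in $\phi^n=K^n(X_t-X_u)\cdot g(X_{\cdot\wedge u})I^n(t)$ and its quadratic analogue. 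Your sketch alludes to ``coercivity/integrability bounds from Assumption~\ref{asscostfunction01}'', but that assumption concerns the cost $H$ and plays no role in this lemma; the only integrability input is $(\alpha,\beta)\in L^1(\Lambda,\nu)$, and it does not by itself give integrability of $X$. Without this truncation-and-limit machinery, the passage from the weak identity to the actual semimartingale decomposition of $X$ is not justified.
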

\begin{proof}
See Appendix. 
\end{proof}

\subsection{Duality}

In this subsection, we will prove Theorem 3.1 (i), the main duality result of the paper. In fact, we will prove a slightly stronger statement in Theorem \ref{thmduality1}, which allows $F$ to be an arbitrary convex function, rather than one defined using the set $\Nset$.

Recall that, at time $t=0$, the saddle point problem is given by
\[
\inf_{\P\in\Pset^0_0(\rho_0)}\sup_{\psi\in \Nset^*} \E^\P \psi- F(\psi)   +\EP \int_0^1 H(s,X_{\cdot\wedge s},\alpha^\P_s,\beta^\P_s) \,ds.
\]
The key step is to encode the condition $\P\in\Pset^0_0(\rho_0)$ using Lemma \ref{lemmeasureconstraint1} and reformulate the problem with respect to the measures $\mu$ and $\nu$,
\[
\inf_{\P\in\Pset^0_0(\rho_0)}\sup_{\psi\in \Nset^*}=\inf_{\substack{\mu\in\Mset_+(\Omega), \nu\in\Mset_+(\Lambda)\\ (\alpha,\beta)\in L^1(\Lambda,\nu;\R^d\times\S^d)}}\sup_{\substack{\phi\in C^{1,2}_0(\Lambda), \psi\in\Nset^*}} .
\]
Then duality (swapping the infimum and the supremum) is established via the Fenchel-Rockafellar duality theorem \ref{thmfenchelrockafellar}. 
We will state the version of the theorem found in \cite{rockafellar1966extension}. Variants of the theorem can also be found in, e.g., the beginning of \cite{brezis1983analyse}, or Theorem 1.9 in \cite{villani2003topics}.
\begin{theorem}[Fenchel-Rockafellar]\label{thmfenchelrockafellar}
Let $E$ be a locally convex Hausdorff topological vector space over $\R$ with dual $E^*$. Let $f:E\to \R\cup\{+\infty\}$ be a proper convex function, $g:E\to \R\cup\{-\infty\}$ be a proper concave function, and $f^*$, $g_*$ be their respective conjugates. If either $f$ or $g$ is continuous at some point where both functions are finite, then
\[
\inf_{x\in E} f(x)-g(x) = \max_{y\in E^*} g_*(y)-f^*(y).
\]
\end{theorem}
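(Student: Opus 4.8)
The plan is to prove the two inequalities in the claimed identity separately. The inequality ``$\ge$'' (weak duality) is immediate, and I would dispose of it first: for every $x\in E$ and $y\in E^*$ the definitions of the conjugates give $f(x)\ge\langle y,x\rangle-f^*(y)$ and $g(x)\le\langle y,x\rangle-g_*(y)$, hence $f(x)-g(x)\ge g_*(y)-f^*(y)$; taking the infimum over $x$ and the supremum over $y$ yields $m:=\inf_{x\in E}(f(x)-g(x))\ge\sup_{y\in E^*}(g_*(y)-f^*(y))$. At a point $x_0$ where $f$ and $g$ are both finite we get $f(x_0)-g(x_0)<+\infty$, so $m<+\infty$; if $m=-\infty$ then weak duality already forces $g_*-f^*\equiv-\infty$ and the identity holds with the convention $\max\emptyset=-\infty$, so it remains to treat $m\in\R$ and to exhibit a $y^\star$ with $g_*(y^\star)-f^*(y^\star)=m$.

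To produce $y^\star$ I would run a Hahn--Banach separation in the product space $E\times\R$, whose continuous dual is $E^*\times\R$. Introduce the convex sets $C_1:=\{(x,t):t\ge f(x)\}$, the epigraph of $f$, and $C_2:=\{(x,t):t\le g(x)+m\}$, the hypograph of $g+m$; both are convex, and $C_2$ is nonempty since $g$ is proper. Suppose first that $f$ is continuous at $x_0$. Then there is a neighbourhood $U$ of $x_0$ on which $f<f(x_0)+\tfrac12$, so $U\times(f(x_0)+\tfrac12,\infty)\subseteq C_1$; hence $\mathrm{int}(C_1)$ is a nonempty open convex set, and every $(x,t)\in\mathrm{int}(C_1)$ satisfies $t>f(x)$ with $f(x)$ finite. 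The sets $\mathrm{int}(C_1)$ and $C_2$ are disjoint: a common point $(x,t)$ would give $f(x)<t\le g(x)+m$, i.e. $f(x)-g(x)<m$, contradicting the definition of $m$. (If instead $g$ is continuous at $x_0$, I would separate $C_1$ from the nonempty open set $\mathrm{int}(C_2)$ by the symmetric argument.)

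By the geometric Hahn--Banach theorem (separation of a nonempty open convex set from a disjoint convex set, valid in any topological vector space) there are a nonzero $(y,s)\in E^*\times\R$ and $\gamma\in\R$ such that, after possibly replacing $(y,s,\gamma)$ by $(-y,-s,-\gamma)$, one has $\langle y,x\rangle+st\ge\gamma$ for all $(x,t)\in\mathrm{int}(C_1)$ and $\langle y,x\rangle+st\le\gamma$ for all $(x,t)\in C_2$. Letting $t\to+\infty$ with $x=x_0$ (so that $(x_0,t)\in\mathrm{int}(C_1)$) forces $s\ge0$; and $s=0$ is impossible, since then $\langle y,\cdot\rangle\ge\gamma$ on a neighbourhood of $x_0$ while $\langle y,x_0\rangle\le\gamma$, forcing $y=0$. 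Rescaling so that $s=1$, I would let $t\downarrow f(x)$ in the first inequality to obtain $\langle y,x\rangle+f(x)\ge\gamma$ for all $x$, i.e. $f^*(-y)\le-\gamma$, and set $t=g(x)+m$ in the second to obtain $\langle y,x\rangle+g(x)\le\gamma-m$ for all $x$, i.e. $g_*(-y)\ge m-\gamma$. Adding these gives $g_*(-y)-f^*(-y)\ge m$, which together with weak duality shows that the supremum equals $m$ and is attained at $y^\star=-y$.

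The hard part is really just the one qualitative input: using the continuity hypothesis to guarantee that the epigraph (or hypograph) has nonempty interior, so that the separation theorem applies with one set open --- without some such qualification the two convex sets could both have empty interior and there is no reason a separating functional would be nontrivial. The remaining ingredients --- weak duality, the sign analysis for $s$, and translating the separating functional back into bounds on $f^*$ and $g_*$ --- are routine bookkeeping.
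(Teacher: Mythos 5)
The paper does not prove this theorem; it imports it verbatim, citing \cite{rockafellar1966extension} (see also the beginning of \cite{brezis1983analyse} and Theorem 1.9 of \cite{villani2003topics}), so there is no in-paper argument to compare against. Your proof is the standard one --- weak duality, followed by Hahn--Banach separation in $E\times\R$ of the epigraph of $f$ (which has nonempty interior by the continuity hypothesis) from the hypograph of $g+m$ --- and it is essentially correct; it is also essentially the argument given in the cited references. The disjointness of $\mathrm{int}(C_1)$ and $C_2$, the sign analysis ruling out $s=0$, the translation of the separating hyperplane into $f^*(-y)\le-\gamma$ and $g_*(-y)\ge m-\gamma$, and the degenerate case $m=-\infty$ (where properness of $f$ and $g$ makes $g_*-f^*$ everywhere $[-\infty,+\infty)$-valued) are all handled correctly.

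One step deserves tightening. The separating inequality $\langle y,x\rangle+st\ge\gamma$ is established only on $\mathrm{int}(C_1)$, whereas ``letting $t\downarrow f(x)$'' requires it on all of $C_1$: a point $(x,f(x)+\delta)$ need not be interior when $x$ lies on the boundary of $\mathrm{dom}(f)$. This is repaired by the standard observation that a convex set with nonempty interior satisfies $C_1\subseteq\overline{\mathrm{int}(C_1)}$ --- equivalently, join $(x,t)\in C_1$ to a fixed interior point by a segment, note that the half-open segment lies in $\mathrm{int}(C_1)$, and pass to the limit in the closed inequality. With that one line added, the argument is complete.
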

Roughly speaking, the duality pairings used in the current context are of the form $(\mu,\nu,\bar \nu, \tilde\nu)$ and $(\phi_1+\psi,\Dt\phi,\Dx\phi,\Dxx\phi)$, where $d\bar\nu=\alpha d\nu$ and $d\tilde\nu=\beta d\nu$.
The dual formulation will be characterised using path dependent PDEs (PPDEs).

\begin{theorem}\label{thmduality1}
Let $F:C_b(\Omega)\to\R$ be a convex function and $\Nset^*\subseteq C_b(\Omega)$ be a convex set such that $\inf_{\psi\in\Nset^*}F(\psi)<+\infty$.
Define the function $F^*:\Mset(\Omega)\to\R$ by\footnote{In this set up, unless $N^*=C_b(\Omega)$, $F^*$ is not necessarily the convex conjugate of $F$. Instead, $F^*$ would be the convex conjugate of $F\I(\cdot\in\Nset^*)$.}
\begin{align*}
F^*(\mu)&:=\sup_{\psi\in \Nset^*} \int_{\Omega} \psi\,d\mu - F(\psi).
\end{align*}
Let $H:\Lambda \times U \to \R\cup \{+\infty\}$ satisfy Assumption \ref{asscostfunction01} and $H^*(t,\omega,\cdot)$ be the convex conjugate of $H(t,\omega,\cdot)$.
Define
\begin{align}
V&:=\inf_{\P\in\Pset^0_0(\rho_0)}F^*(\P)+\EP \int_0^1 H(\alpha^\P_t, \beta^\P_t) \,dt,\label{eqthmduality01}\\
\VV&:=\sup_{\psi\in \Nset^*, \phi\in C^{1,2}_0(\Lambda)} -F(\psi) - \int_{\Omega_0} \phi(0,\cdot)\, d\rho_0, \label{eqthmduality02}\\
\text{subject to}\quad &\phi(1,\cdot)\geq -\psi \quad \text{and}  \quad \Dt\phi + H^*\left(\Dx\phi, \frac{1}{2}\Dxx\phi\right)\leq 0. \label{eqthmduality03}
\end{align}
Then $V=\VV$. Moreover, if $V$ is finite, then the infimum in \eqref{eqthmduality01} is attained. 
\end{theorem}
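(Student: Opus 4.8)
\textbf{Proof proposal for Theorem \ref{thmduality1}.}

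The plan is to set up the problem so that the Fenchel--Rockafellar duality theorem \ref{thmfenchelrockafellar} applies on the space $E = C_b(\Lambda; \R \times \R^d \times \S^d) \times C_b(\Omega)$, where the first factor hosts the tuple $(\Dt\phi, \Dx\phi, \Dxx\phi)$ associated to a test function $\phi \in C^{1,2}_0(\Lambda)$, and the second hosts the constraint multiplier $\psi \in \Nset^*$. The dual space $E^*$ is then $\Mset(\Lambda; \R \times \R^d \times \S^d) \times \Mset(\Omega)$ by the Proposition on the $\mathcal{T}_t$-dual of $C_b$, and the dual variables will be of the form $(\nu, \bar\nu, \tilde\nu, \mu)$ with $d\bar\nu = \alpha\,d\nu$, $d\tilde\nu = \beta\,d\nu$. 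First I would define a concave functional $g$ on $E$ encoding the PPDE constraint \eqref{eqthmduality03} together with the objective $-F(\psi) - \int_{\Omega_0}\phi(0,\cdot)\,d\rho_0$: concretely, $g$ is finite (and equal to the objective) precisely on the convex set of $(\phi$-derivatives$, \psi)$ for which $\phi(1,\cdot) \geq -\psi$ and $\Dt\phi + H^*(\Dx\phi, \tfrac12\Dxx\phi) \leq 0$, and $-\infty$ otherwise. The companion convex functional $f$ should be built from $H$ and the ``$0$ or $+\infty$'' indicator of $\Mset_+$, chosen so that $f^*$ on $E^*$ reproduces, via Lemma \ref{lemmeasureconstraint1}, the primal integrand $\EP\int_0^1 H \, dt$ together with the hard constraint that the dual measure actually corresponds to a semimartingale measure $\P \in \Pset^1_0(\rho_0)$ with characteristics $(\alpha,\beta)$. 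The convex conjugate computation here uses the coercivity Assumption \ref{asscostfunction01}(iii) to ensure $H^*$ is finite-valued and that the Legendre transform behaves well, and Assumption \ref{asscostfunction01}(ii) to restrict $\beta \in \S^d_+$.

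With $f$ and $g$ in hand, the identity $\inf_E (f - g) = \max_{E^*}(g_* - f^*)$ from Theorem \ref{thmfenchelrockafellar} should, after unwinding the conjugates, read exactly as $\VV = V$, with the ``$\max$'' on the dual side delivering attainment of the infimum in \eqref{eqthmduality01} (hence the last sentence of the theorem) whenever $V$ is finite. The translation of $g_* - f^*$ back into the primal problem \eqref{eqthmduality01} is where Lemma \ref{lemmeasureconstraint1} does its work: the pairing of $(\Dt\phi, \Dx\phi, \Dxx\phi)$ against $(\nu, \bar\nu, \tilde\nu)$ and of $\phi(1,\cdot) - \phi(0,\cdot)$ against $\mu$ and $\rho_0$ recreates the functional It\^o identity, so that finiteness of $f^*$ forces $\hat\mu = \nu$ and $\mu \in \Pset^1_0(\rho_0)$. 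One must be slightly careful that the duality is run on $\Pset^1_0(\rho_0)$ (integrable characteristics), and then argue separately — using the coercivity bound and the admissibility hypothesis — that enlarging to $\Pset^0_0(\rho_0)$ in \eqref{eqthmduality01} does not change the value, since any $\P$ with non-integrable characteristics and finite cost would contradict Assumption \ref{asscostfunction01}(iii).

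The main obstacle, and the step requiring the most care, is verifying the qualification hypothesis of Theorem \ref{thmfenchelrockafellar}: one of $f$, $g$ must be continuous at a common finite point. Because we are working with the mixed topology $\mathcal{T}_t$ on $C_b(\Omega)$ (and the analogue on $C_b(\Lambda)$) precisely to avoid the pathologies of the uniform-norm dual in a non-locally-compact setting, continuity must be checked with respect to $\mathcal{T}_t$ rather than the sup norm; this is the technical heart of the argument and is where the choice of topology pays off. I expect one shows $g$ is $\mathcal{T}_t$-continuous at a point constructed from a strictly feasible pair — e.g. taking $\phi$ a smooth function with $\Dt\phi$ strictly negative and dominating $H^*(\Dx\phi, \tfrac12\Dxx\phi)$ (possible since $\min H$ is uniformly bounded, so $H^*(0,0)$ is bounded), and $\psi$ chosen with $\inf_{\Nset^*} F(\psi) < +\infty$ as assumed — so that the PPDE inequalities hold with strict slack, giving an open neighbourhood on which $g$ stays finite and varies continuously. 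A secondary subtlety is the measurable-selection-free identification of the optimal drift/diffusion from the attained dual maximiser, but for this theorem only existence of the primal minimiser is claimed, which follows directly from the ``$\max$'' in Fenchel--Rockafellar once the conjugates are identified.
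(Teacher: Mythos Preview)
Your overall strategy---Fenchel--Rockafellar duality with Lemma \ref{lemmeasureconstraint1} identifying the semimartingale constraint---is indeed the paper's, but the execution has a structural gap that would block the continuity qualification.

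The issue is the choice of $E$ and the definition of $g$. You take $E = C_b(\Lambda; \R\times\R^d\times\S^d)\times C_b(\Omega)$ and let $g$ be finite only on tuples of the form $((\Dt\phi,\Dx\phi,\Dxx\phi),\psi)$ satisfying the PPDE and terminal constraints. But (a) the value $-\int_{\Omega_0}\phi(0,\cdot)\,d\rho_0$ and the constraint $\phi(1,\cdot)\ge -\psi$ are \emph{not} functions of the derivatives alone (two $\phi$'s with identical path derivatives differ by an additive constant), so $g$ is not well-defined on $E$; and (b) even after repairing (a), the effective domain of $g$ sits inside the image of the derivative map $\phi\mapsto(\Dt\phi,\Dx\phi,\Dxx\phi)$, which is a thin (empty-interior) subset of $C_b(\Lambda;\R\times\R^d\times\S^d)$. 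Hence $g$ is nowhere $\mathcal{T}_t$-continuous, regardless of any ``strict slack'' in the PPDE inequality. Your $f$ does not help either: you describe it as built from the indicator of $\Mset_+$, but that is a constraint on the \emph{dual} side $E^*$, not on $E$, so it cannot serve as the function whose continuity is checked.

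The paper avoids this by enlarging $E$ with two extra slots $\bar\varphi\in C_b(\Omega_0)$ and $\varphi\in C_b(\Omega)$, carrying $\phi(0,\cdot)$ and $-(\phi(1,\cdot)+\psi)$ explicitly, and then \emph{separating} the constraints between the two functionals. The thin constraint ``$(\bar\varphi,\varphi,p,q,r)$ are compatible with some $\phi\in C^{1,2}_0(\Lambda)$'' goes into the concave function $b$; the inequality constraints $\varphi\le\epsilon$ and $p+H^*(q,r)\le\epsilon$, together with the linear term $\int_{\Omega_0}\bar\varphi\,d\rho_0$, go into the convex function $a$. The artificial $\epsilon>0$, chosen larger than both $|\psi'|_\infty$ and $|H^*(0,0)|=|\min H|$, is what gives $a$ an open effective domain, and $a$ is then continuous at $(\psi',0,-\psi',0,0,0)\in\operatorname{dom}(b)$. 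After the duality theorem is applied, the spurious $\epsilon$ is removed by the path-dependent trick of subtracting $\int_0^{\cdot}(\Dt\phi+H^*(\Dx\phi,\tfrac12\Dxx\phi))^+\,ds$ from $\phi$ (Remark \ref{rempathdepfuncalter}), which adjusts the time derivative without touching $\Dx\phi$ or $\Dxx\phi$. These two ingredients---the enlarged space with explicit boundary-value slots, and the $\epsilon$-slack placed on the function whose continuity is verified---are precisely what your plan is missing.
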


\begin{proof} For convenience, let us use $\phi_t$ instead of $\phi(t,\cdot)$. Recall that $\hat \P \in \Mset(\Lambda)$ is induced by $\P$ via $d\hat\P=dt\times d\P$.
Under Assumption \ref{asscostfunction01}, it suffices to only consider the set of probability measures $\P\in\Pset^1_0(\rho_0)$ in \eqref{eqthmduality01}.

The direction $V\geq \VV$ can be easily shown as follows.
Applying Fubini's theorem and Lemma \ref{lemmeasureconstraint1}, we have
\begin{align*}
&V=\inf_{\P\in\Pset^1_0(\rho_0)} F^*(\P)+ \int_\Lambda H(\alpha^\P,\beta^\P) \,d\hat\P \\
&=\inf_{\substack{\mu\in\Mset_+(\Omega), \nu\in\Mset_+(\Lambda)\\ (\alpha,\beta)\in L^1(\Lambda,\nu;\R^d\times\S^d)}}\sup_{\substack{\phi\in C^{1,2}_0(\Lambda), \psi\in\Nset^*}} 
 -F(\psi) - \int_{\Omega_0} \phi_0\, d \rho_0 +\int_{\Omega} (\psi+\phi_1) \,d\mu\\
&\qquad\qquad-\int_\Lambda (\Dt\phi + \alpha \cdot \Dx\phi + \frac{1}{2}\beta : \Dxx\phi)\,d\nu+\int_\Lambda H(\alpha,\beta) \,d\nu\\
&\geq \sup_{\substack{\phi\in C^{1,2}_0(\Lambda), \psi\in\Nset^*}}\inf_{\substack{\mu\in\Mset_+(\Omega), \nu\in\Mset_+(\Lambda)\\ (\alpha,\beta)\in L^1(\Lambda,\nu;\R^d\times\S^d)}}
 -F(\psi) - \int_{\Omega_0} \phi_0\, d \rho_0 +\int_{\Omega} (\psi+\phi_1) \,d\mu\\
&\qquad\qquad-\int_\Lambda (\Dt\phi + \alpha \cdot \Dx\phi + \frac{1}{2}\beta : \Dxx\phi)\,d\nu+\int_\Lambda H(\alpha,\beta) \,d\nu\\
&\geq \sup_{\substack{\phi\in C^{1,2}_0(\Lambda), \psi\in\Nset^*}} -F(\psi) - \int_{\Omega_0} \phi_0\, d \rho_0 + \inf_{\mu\in\Mset_+(\Omega)}\int_{\Omega} (\psi+\phi_1) \,d\mu\\
&\qquad\qquad-\sup_{\nu\in\Mset_+(\Lambda)}\int_\Lambda \bigg(\sup_{(\alpha,\beta)\in\R^d\times \S^d} \Dt\phi + \alpha \cdot \Dx\phi + \frac{1}{2}\beta : \Dxx\phi- H(\alpha,\beta)\bigg)\,d\nu\\
&=\sup_{\substack{\phi\in C^{1,2}_0(\Lambda), \psi\in\Nset^*}} -F(\psi) - \int_{\Omega_0} \phi_0\, d \rho_0 + \inf_{\mu\in\Mset_+(\Omega)}\int_{\Omega} (\psi+\phi_1) \,d\mu\\
&\qquad\qquad-\sup_{\nu\in\Mset_+(\Lambda)}\int_\Lambda \bigg(\Dt\phi + H^*\bigg(\Dx\phi,\frac{1}{2}\Dxx\phi\bigg)\bigg)\,d\nu\\
&=\sup_{\substack{\phi\in C^{1,2}_0(\Lambda), \psi\in\Nset^*}} -F(\psi) - \int_{\Omega_0}\phi_0\,d\rho_0 , \\
&\qquad\qquad\qquad\qquad \text{s.t. $\phi_1\geq -\psi,\ \Dt\phi+ H^*(\Dx\phi, \frac{1}{2}\Dxx\phi)\leq 0$.}\\
&=\VV
\end{align*}

Now let us focus on the opposite direction, $V\leq \VV$, which is significantly more difficult. 
The main technical issue is that, in order to use the Fenchel-Rockafellar duality theorem \ref{thmfenchelrockafellar}, we require the effective domain of a particular convex function to have a non-empty interior. This is achieved by introducing two additional slack terms of $\epsilon$, in both $H$ and $\phi_1+\psi$. This is similar to the technique in \cite{villani2003topics}, Section 1.3, for the proof of Kantorovich duality in Proposition 1.22.

By the conditions on $F$ and $\Nset^*$, there exists some $\psi'\in \Nset^*$ such that $F(\psi')<+\infty$.
Throughout the remainder of the proof, let $\epsilon>0$ be a fixed constant such that $\epsilon > \max(|\psi|_{\infty},  |H^*(0,0)|)=\max(|\psi|_{\infty},  |\min H|)$.

Then we have
\begin{align*}
V+2\epsilon&=\inf_{\P\in\Pset^1_0(\rho_0)} F^*(\P) +  \int_\Omega \epsilon\, d\P+ \int_\Lambda (H(\alpha^\P, \beta^\P)+\epsilon) \,d\hat\P \\
&=\inf_{\P\in\Pset^1_0(\rho_0)} \sup_{\psi\in \Nset^*} - F(\psi)  + \int_\Omega (\psi+\epsilon)\, d\P + \int_\Lambda (H(\alpha^\P, \beta^\P)+\epsilon) \,d\hat\P.
\end{align*}
Applying Fubini's theorem and Lemma \ref{lemmeasureconstraint1}, we have
\begin{align*}
V+2\epsilon&=\inf_{\substack{\mu\in\Mset_+(\Omega), \nu\in\Mset_+(\Lambda)\\ (\alpha,\beta)\in L^1(\Lambda,\nu;\R^d\times\S^d)}}\sup_{\substack{\phi\in C^{1,2}_0(\Lambda), \psi\in\Nset^*}} 
 -F(\psi)  +\int_{\Omega} (\psi+\phi_1+\epsilon) \,d\mu\\
&\qquad\qquad- \int_{\Omega_0} \phi_0\, d \rho_0  -\int_\Lambda (\Dt\phi + \alpha \cdot \Dx\phi + \frac{1}{2}\beta : \Dxx\phi-H(\alpha,\beta)-\epsilon) \,d\nu \\
&=\inf_{\substack{\mu\in\Mset_+(\Omega), \nu\in\Mset_+(\Lambda)\\ (\alpha,\beta)\in L^1(\Lambda,\nu;\R^d\times\S^d)}}
\sup_{\substack{\phi\in C^{1,2}_0(\Lambda), \psi\in\Nset^* \\ \varphi\in C_b(\Omega), \varphi= -(\phi_1+\psi)}} 
-F(\psi) +\int_{\Omega} (\epsilon-\varphi) \,d\mu \\
&\qquad\qquad- \int_{\Omega_0} \phi_0\, d \rho_0-\int_\Lambda (\Dt\phi+ \alpha \cdot \Dx\phi + \frac{1}{2}\beta : \Dxx\phi-H(\alpha,\beta)-\epsilon) \,d\nu.
\end{align*}

For the next part of the proof, introduce the measures $(\bar\nu, \tilde\nu) \in \Mset(\Lambda;\R^d\times \S^d)$ via
$d\bar\nu=\alpha d\nu$ and $d\tilde\nu=\beta d\nu$,
so that we can write
\begin{align*}
V+2\epsilon
&=\inf_{(\xi,\rho,\mu,\nu,\bar\nu,\tilde\nu)\in\AA}\sup_{(\psi,\bar\varphi,\varphi,p,q,r)\in\BB}
-F(\psi) +\int_\Lambda \left(H\left(\frac{d\bar\nu}{d\nu},\frac{d\tilde\nu}{d\nu}\right)+\epsilon\right) d\nu \\
&\qquad\qquad\qquad+\int_{\Omega} \epsilon \,d\mu - \langle (\xi,\rho,\mu,\nu,\bar\nu,\tilde\nu),(\psi,\bar\varphi,\varphi,p,q,r) \rangle,
\end{align*}
where
\begin{align*}
\AA&:=\{(\xi,\rho,\mu,\nu,\bar\nu,\tilde\nu)\in \Mset({\Omega})\times \Mset({\Omega_0})\times\Mset({\Omega})\times\Mset(\Lambda;\R\times\R^d\times \S^d):\\
&\qquad\qquad \xi=0,\ \rho=\rho_0,\ \mu\geq 0,\ \nu \geq 0,\ (\bar\nu,\tilde\nu)\ll\nu\},\\
\BB&:=\{(\psi,\bar\varphi,\varphi,p,q,r)\in C_b(\Omega)\times C_b({\Omega_0})\times C_b({\Omega})\times C_b(\Lambda;\R\times\R^d\times \S^d):\\
&\qquad\qquad \psi\in\Nset^*,\ \exists\, \phi\in C^{1,2}_0(\Lambda) \ \text{ s.t. }\ \bar\varphi=\phi_0,\ \varphi= -(\phi_1+\psi),\ (p,q,r)=(\Dt,\Dx,\frac{1}{2}\Dxx)\phi\},
\end{align*}
and the inner product is defined by
\begin{align*}
\langle (\xi,\rho,\mu,\nu,\bar\nu,\tilde\nu),(\psi,\bar\varphi,\varphi,p,q,r) \rangle 
:= \int_{\Omega} (\psi\,d\xi+\varphi\,d\mu)+ \int_{\Omega_0}  \bar\varphi \,d\rho + \int_\Lambda (p\,d\nu +  q\cdot d\bar\nu +  r : d\tilde\nu).
\end{align*}
Note that both $\AA$ and $\BB$ are convex sets. 

Next, define the convex function 
$a:C_b(\Omega)\times C_b({\Omega_0})\times C_b({\Omega})\times C_b(\Lambda;\R\times\R^d\times \S^d)\to\R\cup\{+\infty\}$ and its convex conjugate 
$a^*: \Mset({\Omega})\times \Mset({\Omega_0})\times\Mset({\Omega})\times\Mset(\Lambda;\R\times\R^d\times \S^d)\to\R\cup\{+\infty\}$
according to Lemma \ref{lemcostfunciton01}. They are given by the following expressions:
\begin{align*}
a(\psi,\bar\varphi,\varphi,p,q,r) &:= \begin{cases} \displaystyle
\int_{\Omega_0} \bar\varphi\, d\rho_0, & \text{if $\varphi\leq \epsilon$ and $p+H^*(q, r)\leq \epsilon$,} \\
+\infty, & \text{otherwise,}
\end{cases}\\
a^*(\xi,\rho,\mu,\nu,\bar\nu,\tilde\nu)&:= \begin{cases} \displaystyle
\int_{\Omega} \epsilon \,d\mu+\int_\Lambda \left(H\left(\frac{d\bar\nu}{d\nu},\frac{d\tilde\nu}{d\nu}\right)+\epsilon\right) d\nu,  & \text{if $(\xi,\rho,\mu,\nu,\bar\nu,\tilde\nu)\in\AA$}, \\
+\infty,  & \text{otherwise.}
\end{cases}
\end{align*}
Furthermore define the concave function $b:C_b(\Omega)\times C_b({\Omega_0})\times C_b({\Omega})\times C_b(\Lambda;\R\times\R^d\times \S^d)\to\R\cup\{-\infty\}$ and its concave conjugate 
$b_*: \Mset({\Omega})\times \Mset({\Omega_0})\times\Mset({\Omega})\times\Mset(\Lambda;\R\times\R^d\times \S^d)\to\R\cup\{-\infty\}$ in the following way
\begin{align*}
b(\psi,\bar\varphi,\varphi,p,q,r)&:= \begin{cases} \displaystyle
-F(\psi), & \text{if $(\psi,\bar\varphi,\varphi,p,q,r)\in\BB$}, \\
-\infty, & \text{otherwise.}
\end{cases}\\
b_*(\xi,\rho,\mu,\nu,\bar\nu,\tilde\nu)&:= \inf_{(\psi,\bar\varphi,\varphi,p,q,r)\in \BB}\langle (\xi,\rho,\mu,\nu,\bar\nu,\tilde\nu),(\psi,\bar\varphi,\varphi,p,q,r) \rangle +F(\psi) .
\end{align*}
Note that we do not need to compute $b_*$ explicitly.
Hence $V$ can be written as
\begin{align*}
V+2\epsilon&= \inf_{(\xi,\rho,\mu,\nu,\bar\nu,\tilde\nu)}\sup_{(\psi,\bar\varphi,\varphi,p,q,r)}  a^*(\xi,\rho,\mu,\nu,\bar\nu,\tilde\nu) + b(\psi,\bar\varphi,\varphi,p,q,r)\\
&\qquad\qquad\qquad\qquad\qquad\qquad - \langle (\xi,\rho,\mu,\nu,\bar\nu,\tilde\nu),(\psi,\bar\varphi,\varphi,p,q,r) \rangle,\\
&=\inf_{(\xi,\rho,\mu,\nu,\bar\nu,\tilde\nu)}  a^*(\xi,\rho,\mu,\nu,\bar\nu,\tilde\nu) - b_*(\xi,\rho,\mu,\nu,\bar\nu,\tilde\nu).
\end{align*}
In order to apply the Fenchel-Rockafellar duality theorem \ref{thmfenchelrockafellar}, we require $\operatorname{cont}(a)\cap \operatorname{dom}(b) \neq \emptyset$. Recall that $F(\psi')<+\infty $, $-\psi'<\epsilon$ and $H^*(0,0)<\epsilon$. The required condition is fulfilled at $(\psi,\bar\varphi,\varphi,p,q,r)=(\psi',0,-\psi',0,0,0)$. 
The duality theorem implies that
\begin{align*}
V+2\epsilon&=\sup_{(\psi,\bar\varphi,\varphi,p,q,r)} b(\psi,\bar\varphi,\varphi,p,q,r)- a(\psi,\bar\varphi,\varphi,p,q,r)\\
&=\sup_{(\psi,\bar\varphi,\varphi,p,q,r)\in \BB}   -F(\psi) -  \int_{\Omega_0} \bar\varphi\, d\rho_0, \quad\text{s.t. $\varphi\leq \epsilon$, $p+H^*(q, r)\leq \epsilon$},\\
&=\sup_{\substack{\phi\in C^{1,2}_0(\Lambda), \psi\in\Nset^*}} -F(\psi) - \int_{\Omega_0}\phi_0\,d\rho_0 , \\
&\qquad\qquad\qquad \text{s.t. $\phi_1+\psi\geq -\epsilon$, $\Dt\phi+ H^*(\Dx\phi, \frac{1}{2}\Dxx\phi)\leq \epsilon$,}
\end{align*}
Translating $\phi$ by $2\epsilon$ yields
\begin{align}
V&= \sup_{\substack{\phi\in C^{1,2}_0(\Lambda), \psi\in\Nset^*}} -F(\psi) - \int_{\Omega_0}\phi_0\,d\rho_0 , \nonumber\\
&\qquad\qquad\qquad \text{s.t. $\phi_1+\psi\geq \epsilon$ and $\Dt\phi+ H^*(\Dx\phi, \frac{1}{2}\Dxx\phi)\leq \epsilon$.} \label{eqphicondition1}
\end{align}
In order to eliminate $\epsilon$ in \eqref{eqphicondition1}, we will use the fact that one can arbitrarily modified the time derivative of a path dependent function without altering the space derivatives (see Remark \ref{rempathdepfuncalter}).
For each $\phi\in C^{1,2}_0(\Lambda)$ satisfying \eqref{eqphicondition1}, we can construct $\phi'\in C^{1,2}_0(\Lambda)$ via
\[
\phi'(t,\cdot)=\phi(t,\cdot)-\int_0^t \left(\Dt\phi(s,\cdot)+ H^*(\Dx\phi(s,\cdot), \frac{1}{2}\Dxx\phi(s,\cdot))\right)^+ ds.
\]
It is straightforward to check that
\begin{gather*}
\phi'_0=\phi_0,\quad \phi'_1\geq\phi_1-\epsilon,\quad (\Dx,\Dxx)\phi'=(\Dx,\Dxx)\phi, \\
\Dt\phi'=\Dt\phi-\left(\Dt\phi+ H^*(\Dx\phi, \frac{1}{2}\Dxx\phi)\right)^+\leq -H^*(\Dx\phi', \frac{1}{2}\Dxx\phi').
\end{gather*}
Therefore 
\begin{align*}
V&\leq \sup_{\substack{\phi\in C^{1,2}_0(\Lambda), \psi\in\Nset^*}} -F(\psi) - \int_{\Omega_0}\phi_0\,d\rho_0 , \\
&\qquad\qquad\qquad \text{s.t. $\phi_1+\psi\geq 0$ and $\Dt\phi+ H^*(\Dx\phi, \frac{1}{2}\Dxx\phi)\leq 0$}\\
&=\VV.
\end{align*}
Thus we may conclude $V=\VV$. The fact that the infimum in \eqref{eqthmduality01} is attained if $V<+\infty$ is a direct consequence of the Fenchel-Rockafellar duality theorem \ref{thmfenchelrockafellar}. This completes the proof of Theorem \ref{thmduality1}.
\end{proof}

\begin{corollary}\label{corduality15} The analogous result for time $t>0$ is given below.
\begin{gather}
\inf_{\P\in\Pset^0_t(\rho_t)} F^*(\P)+\EP \int_t^1 H(s,X_{\cdot\wedge s},v^\P_s) \,ds
=\sup_{\psi\in\Nset^*,\phi\in C^{1,2}_t(\Lambda)} -F(\psi) - \int_{\Omega_t}\phi(t,\cdot)\,d\rho_t,\nonumber \\
\text{subject to}\quad \phi(1,\cdot)\geq -\psi \quad \text{and}  \quad \Dt\phi + H^*\left(\Dx\phi, \frac{1}{2}\Dxx\phi\right)\leq 0.\label{eqcordualityppde}
\end{gather}
\end{corollary}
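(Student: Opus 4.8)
The plan is to reproduce the proof of Theorem~\ref{thmduality1} essentially verbatim, replacing the time origin $0$ by the fixed time $t$ throughout. The single new ingredient needed is the time-$t$ analogue of Lemma~\ref{lemmeasureconstraint1}, which I would state and prove first: given $\mu\in\Mset_+(\Omega)$, define the induced measure $\hat\mu\in\Mset_+(\Lambda)$ by $\hat\mu(E)=\int_\Omega\int_t^1 \I((s,\omega_{\cdot\wedge s})\in E)\,ds\,d\mu$ (integrating over $[t,1]$ in place of $[0,1]$, and relabelling the dummy variable so it does not clash with the fixed $t$); then for $\nu\in\Mset_+(\Lambda)$ and $(\alpha,\beta)\in L^1(\Lambda,\nu;\R^d\times\S^d)$, the identity
\begin{gather*}
\int_\Omega \phi(1,\cdot)\,d\mu - \int_{\Omega_t}\phi(t,\cdot)\,d\rho_t = \int_\Lambda \Dt\phi + \alpha\cdot\Dx\phi + \tfrac12\beta:\Dxx\phi\,d\nu
\end{gather*}
holds for all $\phi\in C^{1,2}_t(\Lambda)$ if and only if $\hat\mu=\nu$ and $\mu\in\Pset^1_t(\rho_t)$ has characteristics $(\alpha,\beta)$. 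The proof is word-for-word that of Lemma~\ref{lemmeasureconstraint1}, since the functional It\^o formula in Definition~\ref{defpathderiv} is already localised to $\P\in\Pset^1_t$ and $u\in[t,1]$; the only visible changes are the interval of integration and the boundary term $\phi(t,\cdot)$, which is now a genuine function on the path space $\Omega_t$ rather than essentially a function on $\R^d$.

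With this lemma available, I would rerun the two inequalities of Theorem~\ref{thmduality1}. For $V\geq\VV$, substitute the new lemma and apply Fubini together with the $\inf$--$\sup$ inequality and the pointwise conjugacy $\sup_{(\alpha,\beta)}\{\alpha\cdot q+\tfrac12\beta:r-H(\alpha,\beta)\}=H^*(q,r)$ exactly as before; nothing changes beyond $[0,1]\rightsquigarrow[t,1]$ and $\rho_0\rightsquigarrow\rho_t$. For $V\leq\VV$, I would again introduce the two $\epsilon$-slacks (one in $H$, one in $\phi(1,\cdot)+\psi$), recast the problem as $\inf_{(\xi,\rho,\mu,\nu,\bar\nu,\tilde\nu)} a^*-b_*$ with the constraint now reading $\rho=\rho_t$, verify the Fenchel--Rockafellar hypothesis $\operatorname{cont}(a)\cap\operatorname{dom}(b)\neq\emptyset$ at the point $(\psi',0,-\psi',0,0,0)$ where $\psi'\in\Nset^*$ satisfies $F(\psi')<+\infty$ (this is exactly the hypothesis $\inf_{\psi\in\Nset^*}F(\psi)<+\infty$), apply Theorem~\ref{thmfenchelrockafellar}, and finally remove $\epsilon$ by the time-derivative modification $\phi'(s,\cdot)=\phi(s,\cdot)-\int_t^s\big(\Dt\phi(r,\cdot)+H^*(\Dx\phi(r,\cdot),\tfrac12\Dxx\phi(r,\cdot))\big)^+\,dr$, which leaves $\phi(t,\cdot)$ and the space derivatives unchanged, forces $\Dt\phi'+H^*(\Dx\phi',\tfrac12\Dxx\phi')\leq 0$, and lowers $\phi'(1,\cdot)$ by at most $\epsilon$. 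Attainment of the infimum when $V<+\infty$ is once more immediate from the Fenchel--Rockafellar theorem.

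The genuinely new point, and hence the main obstacle, is the ``if'' direction of the time-$t$ version of Lemma~\ref{lemmeasureconstraint1}: one must reconstruct an $(\FF,\mu)$-semimartingale on $[t,1]$ that is consistent with $\rho_t$ on $\Omega_t$ from the family of test-function identities. The subtlety relative to $t=0$ is that the initial data is a distribution over stopped paths, so the reconstruction must respect the $\sigma$-algebra isomorphism $\Filt_1\cong\Filt_t\otimes\Borel(\Omega^t)$ and constrain only the dynamics after $t$. However, because Definition~\ref{defpathderiv} already restricts the It\^o formula to $[t,1]$ and $C^{1,2}_t(\Lambda)$ contains a sufficiently rich class of test functionals (e.g.\ smooth cylindrical functionals of increments after $t$), the selection/approximation argument used for Lemma~\ref{lemmeasureconstraint1} transfers without essential change. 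All the downstream objects --- the cones $\AA$, $\BB$, the functions $a,a^*,b,b_*$, and the $\epsilon$-bookkeeping --- are formally identical to the $t=0$ case, so no further work is required, which is why the statement is recorded as a corollary rather than a separate theorem.
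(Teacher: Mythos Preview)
Your proposal is correct and is exactly the approach the paper has in mind: the authors explicitly state before Theorem~\ref{thmduality1} that ``the proof of the main duality result will focus on the case $t=0$ \ldots\ The general case can be dealt with using similar arguments,'' and record the $t>0$ statement as a corollary without further proof. One small terminological slip: the direction of the time-$t$ Lemma~\ref{lemmeasureconstraint1} you describe as the main obstacle (reconstructing the semimartingale measure from the test-function identities) is the \emph{only if} direction, not the \emph{if} direction --- the latter is immediate from the functional It\^o formula.
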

\begin{remark}\label{rempathdepfuncalter}
By adding $\int_0^\cdot f \, dt$ to any function in $C^{1,2}_0(\Lambda)$, it is possible to change the time derivative by an arbitrary function $f$ without altering the space derivatives.
This is a useful yet peculiar property of path dependent derivatives and it is somewhat counter-intuitive when compared to conventional differentiable functions. By applying this idea and increasing $\Dt\phi$ appropriately, we can replace the inequality in the PPDE by an equality. However, the inequality in the terminal condition remains.
\end{remark}

\begin{remark}\label{remrobusthedging}
Theorem \ref{thmduality1} provides a slightly more general set up than Theorem \ref{thmsummary01} (i), due to the additional flexibility in $F$. Beyond the examples in Subsection \ref{secexamples}, this allows for an even wider range of applications, e.g., the inclusion of a terminal objective function in the primal problem. A noteworthy example is the application to the robust hedging of path dependent options in continuous time settings (see, e.g., \cite{dolinsky2014martingale, hou2018robust}). As shown below, Theorem \ref{thmduality1} immediately implies the so-called \emph{robust pricing hedging duality}.

Consider the problem of robust hedging a European payoff $f\in C_b(\Omega)$ with respect to a set of models which are represented by the martingale measures whose diffusion characteristic is bounded lines in some compact and convex set $D\subseteq \S^d_+$. Suppose there are additional model constraints in the form of European payoffs $g\in C_b(\Omega,\R^m)$ with the known price of 0. Let us set $F=0$, $\Nset^*=\{\lambda\cdot g - f: \lambda \in\R^m\}$, and $H(\alpha,\beta)=0$ if $\alpha=0, \beta\in D$ or $+\infty$ otherwise. Then the robust model price is equal to the primal problem,
\[
\sup_{\substack{\P\in \Pset^0_0, \EP g= 0\\ \alpha^\P=0, \beta^\P\in D}} \EP f = V =
\sup_{\P\in \Pset^0_0} \inf_{\lambda \in \R^m} -\lambda \cdot \EP g + \EP f - \EP \int_0^1 H(\alpha^{\P},\beta^{\P})\, dt.
\]
The dual formulation is given by
\begin{gather} \label{eqthmduality033}
\mathcal{V} = \inf_{\substack{\lambda \in \R^m\\ \phi\in C^{1,2}_0(\Lambda)}} \phi(0,X_0),\quad \text{ s.t. }
\quad \phi(1,\cdot)\geq f-\lambda\cdot g, \ \Dt\phi + \sup_{\beta \in D} \frac{1}{2}\beta : \Dxx\phi\leq 0.
\end{gather}
By the functional It\^o formula, for each $\phi$ satisfying \eqref{eqthmduality033}, the following inequality holds $\P$-$\as$ for every candidate model $\P$,
\begin{align*}
f-\lambda\cdot g -\phi(0,X_0) &\leq \int_0^1 (\Dt \phi+ \,\frac{1}{2} \beta^\P:\Dxx \phi) dt+ \Dx \phi \cdot dX_t\\
&\leq \int_0^1 \Dx \phi \cdot dX_t.
\end{align*}
Hence the trading strategy with a starting cost of $\phi(0,X_0)$, holding a dynamic portfolio $\Dx \phi$ in $X$ and a static position $\lambda$ in $g$, is a superhedging strategy for $f$. Since this is true for all $\phi$ satisfying \eqref{eqthmduality033}, the dual value $\mathcal{V}$ must be an upper bound for the superhedging price, which is easily shown to be at least the robust model price. By Theorem \ref{thmduality1}, we must have equality between the robust model price and the superhedging price.
\end{remark}

To complete the proof of Theorem \ref{thmsummary01} (i), we need to express the solution of the path dependent optimal transport problem in terms of the function $J$, which reaffirms the link between optimal control problems and PPDEs.
\begin{proposition}\label{corduality2}
Fix $\psi\in C_b(\Omega)$.
Define $J$ by 
\begin{align}\label{eqjj01}
J_{\psi,H}(t,\rho_t)&:=\sup_{\P\in\Pset^0_t(\rho_t)} -\E^\P \psi-\EP \int_t^1 H(\alpha^\P_s,\beta^\P_s) \,ds.
\end{align}
(i) Then $J_{\psi,H}$ can be written as
\begin{gather*}
J_{\psi,H}(t,\rho_t)=\inf_{\phi\in C^{1,2}_t(\Lambda)} \int_{\Omega_t} \phi(t,\cdot)\, d\rho_t,\\
\text{subject to}\quad 
\phi(1,\cdot)\geq -\psi \quad \text{and}  \quad \Dt\phi + H^*\left(\Dx\phi, \frac{1}{2}\Dxx\phi\right)\leq 0.
\end{gather*}
(ii) The function $J_{\psi,H}$ also satisfies
\begin{gather*}
J_{\psi,H}(t,\rho_t)=\int_{\Omega_t} J_{\psi,H}(t,\delta_{\omega_{\cdot\wedge t}})\, d\rho_t.
\end{gather*}
\end{proposition}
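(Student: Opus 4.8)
The plan is to derive part (i) directly from the duality result of Theorem \ref{thmduality1} (in the form of Corollary \ref{corduality15}) by specialising the choice of constraint data, and then to deduce part (ii) from a conditioning / disintegration argument combined with the PDE characterisation in (i).

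For part (i): fix $\psi\in C_b(\Omega)$ and apply Theorem \ref{thmduality1} (or Corollary \ref{corduality15} for general $t$) with the trivial constraint $F\equiv 0$ and $\Nset^*=\{\psi\}$ a singleton — equivalently, $\Nset=\Mset(\Omega)$ so that $F^*\equiv 0$ on all of $\Mset(\Omega)$. With this choice the primal problem \eqref{eqthmduality01} becomes exactly
\[
\inf_{\P\in\Pset^0_t(\rho_t)} \EP \int_t^1 H(\alpha^\P_s,\beta^\P_s)\,ds
\]
with no measure constraint; but that is not quite $J_{\psi,H}$, which has an extra terminal term $-\EP\psi$. To capture the $-\EP\psi$ term one instead takes $F(\psi'):=+\infty$ unless $\psi'=-\psi$, in which case $F(-\psi):=0$; then $F^*(\mu)=\int_\Omega(-\psi)\,d\mu=-\int\psi\,d\mu$ over the (single-point) set $\Nset^*=\{-\psi\}$, and \eqref{eqthmduality01} reads $\inf_{\P}\, -\EP\psi + \EP\int_t^1 H\,ds$, which is precisely $-J_{\psi,H}(t,\rho_t)$ after changing the sign convention (note $J$ is a supremum, so $J_{\psi,H}(t,\rho_t) = -V$). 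The dual \eqref{eqthmduality02}–\eqref{eqthmduality03} then collapses, since $\Nset^*$ is a singleton, to a supremum over $\phi\in C^{1,2}_t(\Lambda)$ alone of $-\int_{\Omega_t}\phi(t,\cdot)\,d\rho_t$ subject to $\phi(1,\cdot)\geq -\psi$ and $\Dt\phi+H^*(\Dx\phi,\tfrac12\Dxx\phi)\leq 0$; flipping sign turns this $\sup$ of $-\int\phi(t,\cdot)\,d\rho_t$ into $\inf$ of $\int\phi(t,\cdot)\,d\rho_t$, which is the asserted formula. The admissibility hypothesis needed to invoke Theorem \ref{thmduality1} holds because $\psi$ is bounded and $\min H$ is uniformly bounded (Assumption \ref{asscostfunction01}(i)), so the constant-drift, constant-diffusion measure lies in $\Pset^0_t(\rho_t)$ with finite cost; hence $V$ is finite and the theorem applies.

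For part (ii): the key observation is that both the primal functional $\P\mapsto -\EP\psi-\EP\int_t^1 H\,ds$ and the admissible set $\Pset^0_t(\rho_t)$ disintegrate over the starting path. Concretely, any $\P\in\Pset^0_t(\rho_t)$ can be written via the isomorphism $\Omega\cong\Omega_t\times\Omega^t$ and a regular conditional probability $\omega_{\cdot\wedge t}\mapsto \P^{\omega_{\cdot\wedge t}}\in\Pset^0_t(\delta_{\omega_{\cdot\wedge t}})$, where $\rho_t$-almost every $\P^{\omega_{\cdot\wedge t}}$ is itself an admissible semimartingale measure started from the fixed path $\omega_{\cdot\wedge t}$; conversely any measurable family of such conditional measures glues back to a measure in $\Pset^0_t(\rho_t)$. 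Because the objective is an integral of local (adapted) quantities over $[t,1]$ with no coupling across different starting paths, Fubini gives
\[
-\EP\psi-\EP\int_t^1 H\,ds = \int_{\Omega_t}\Bigl(-\E^{\P^{\omega_{\cdot\wedge t}}}\psi - \E^{\P^{\omega_{\cdot\wedge t}}}\int_t^1 H\,ds\Bigr)d\rho_t(\omega_{\cdot\wedge t}),
\]
so optimising the left side over $\P$ is the same as optimising the integrand pointwise, yielding $J_{\psi,H}(t,\rho_t)=\int_{\Omega_t} J_{\psi,H}(t,\delta_{\omega_{\cdot\wedge t}})\,d\rho_t$. The measurability of $\omega_{\cdot\wedge t}\mapsto J_{\psi,H}(t,\delta_{\omega_{\cdot\wedge t}})$ needed to make the integral meaningful can be read off from the PDE representation in (i): for a fixed admissible $\phi$, the map $\omega_{\cdot\wedge t}\mapsto \phi(t,\omega_{\cdot\wedge t})$ is continuous, and $J_{\psi,H}(t,\delta_{\cdot})$ is an infimum of such continuous functions over a fixed family, hence upper semianalytic / universally measurable; alternatively one shows it directly as an infimum over a countable dense subclass of test functions.

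The main obstacle I expect is the measurable-selection / regular-conditional-probability bookkeeping in part (ii): one must verify that the disintegration of a semimartingale measure on $[t,1]$ indeed yields, for $\rho_t$-a.e.\ starting path, a genuine element of $\Pset^0_t(\delta_{\omega_{\cdot\wedge t}})$ with the right characteristics (the conditional characteristics agreeing $d\P\times dt$-a.e.\ with the original ones), and that the reverse gluing preserves both the semimartingale property and the adaptedness of $(\alpha,\beta)$. The "$\geq$" direction of (ii) (every global measure does no better than the average of conditional optima) is immediate from the Fubini identity above; the "$\leq$" direction (one can assemble a near-optimal global measure from near-optimal conditional ones) is where the selection theorem is genuinely used, and is the step that most naturally leans on the PPDE side of (i) — given an $\varepsilon$-optimal $\phi$ for $(t,\rho_t)$, the same $\phi$ is admissible, hence $\varepsilon$-competitive, for every $\delta_{\omega_{\cdot\wedge t}}$, which already forces $\int J_{\psi,H}(t,\delta_{\omega_{\cdot\wedge t}})\,d\rho_t \leq \int\phi(t,\cdot)\,d\rho_t$, closing the loop without any explicit construction of the optimal conditional measures.
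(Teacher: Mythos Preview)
Your overall strategy matches the paper's exactly: part (i) is the specialisation of Theorem \ref{thmduality1}/Corollary \ref{corduality15} to a singleton $\Nset^*$, and part (ii) is proved by combining the dual representation (for the inequality $J_{\psi,H}(t,\rho_t)\geq\int J_{\psi,H}(t,\delta_{\omega_{\cdot\wedge t}})\,d\rho_t$) with disintegration of $\P$ over $X_{\cdot\wedge t}$ (for the reverse inequality). In particular, you are right that no measurable selection or gluing is needed---the paper also avoids it precisely by using the dual side.

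However, your execution of part (i) contains a sign slip. Your \emph{first} choice, $F\equiv 0$ and $\Nset^*=\{\psi\}$, is exactly what the paper does, and it already works: by the definition in Theorem \ref{thmduality1}, $F^*(\mu)=\sup_{\psi'\in\Nset^*}\int\psi'\,d\mu-F(\psi')=\int\psi\,d\mu$ (not $F^*\equiv 0$ as you claim). Hence $V=\inf_{\P}\big(\E^\P\psi+\E^\P\!\int_t^1 H\,ds\big)=-J_{\psi,H}(t,\rho_t)$, and the dual constraint is $\phi(1,\cdot)\geq-\psi$ as required. Your ``correction'' to $\Nset^*=\{-\psi\}$ is actually wrong on both sides: with $\psi'=-\psi$ the primal becomes $V=\inf_\P(-\E^\P\psi+\E^\P\!\int H\,ds)$, whose negative is $\sup_\P(\E^\P\psi-\E^\P\!\int H\,ds)\neq J_{\psi,H}$, and the dual terminal constraint becomes $\phi(1,\cdot)\geq -\psi'=\psi$, not $\phi(1,\cdot)\geq-\psi$. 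So drop the second attempt and keep the first, with the corrected computation of $F^*$.
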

\begin{proof}
(i) This is an immediate consequence of Theorem \ref{thmduality1} after setting $F=0$ and $\Nset^*=\{\psi\}$.


\noindent (ii) Define the set
\[
\Phi:=\bigg\{\phi\in C^{1,2}_t(\Lambda): \phi(1,\cdot)\geq -\psi,\ \Dt\phi + H^*\left(\Dx\phi, \frac{1}{2}\Dxx\phi\right)\leq 0\bigg\}.
\]
From part (i), we immediately have
\[
J_{\psi,H}(t,\rho_t)\geq \int_{\Omega_t} \inf_{\phi\in \Phi}  \phi(t,\omega_{\cdot\wedge t})\, d\rho_t = \int_{\Omega_t} J_{\psi,H}(t,\delta_{\omega_{\cdot\wedge t}})\, d\rho_t.
\]
For the other direction, for any $\P\in\Pset^0_t(\rho_t)$, let us disintegrate $\P$ with respect to the map $X_{\cdot\wedge t}$. Hence there exists a $\rho_t$-\as unique map $\omega_{\cdot\wedge t} \to \P_{\omega_{\cdot\wedge t}} \in \Pset_t(\delta_{\omega_{\cdot\wedge t}})$ such that, for all $E\subset \Omega$,
\[
\P(E)=\int_{\Omega_t}  \P_{\omega_{\cdot\wedge t}}(E)\, d\rho_t.
\]
Recall that $X$ is an $(\FF,\P)$-semimartingale with characteristics $(\alpha^\P,\beta^\P)$. Then it follows that, $\rho_t$-\as, $X$ is an $(\FF,\P_{\omega_{\cdot\wedge t}})$-semimartingale also with characteristics $(\alpha^\P,\beta^\P)$. Hence
\begin{align*}
\E^\P \bigg({-\psi}-\int_t^1 & H(\alpha^\P_s,\beta^\P_s)  \,ds\bigg)= \int_{\Omega_t} \E^{\P_{\omega_{\cdot\wedge t}}} \bigg({-\psi}-\int_t^1 H(\alpha^\P_s,\beta^\P_s) \,ds\bigg) \, d\rho_t\\
&\leq \int_{\Omega_t} \sup_{\P'\in\Pset^0_t(\delta_{\omega_{\cdot\wedge t}})} \E^{\P'} \bigg({-\psi}-\int_t^1 H(\alpha^{\P'}_s,\beta^{\P'}_s) \,ds\bigg) \, d\rho_t\\
&=\int_{\Omega_t} J_{\psi,H}(t,\delta_{\omega_{\cdot\wedge t}}) \, d\rho_t.
\end{align*}
Since this holds for any $\P\in\Pset^0_t(\rho_t)$, by definition of $J_{\psi,H}$, we obtain the required result.
\end{proof}

\begin{remark}
The argument used in Proposition \ref{corduality2} can be extended to obtain the well-known dynamic programming principle for $J_t=J_{\psi,H}(t,\delta_{\omega_{\cdot\wedge t}})$. To briefly outline the argument, for any stopping time $\tau\geq t$, we have the following chain of inequalities,
\[
J_t \leq \sup_\P \E^\P \bigg(J_\tau - \int_t^\tau H(\alpha^\P,\beta^\P)\, dt\bigg) = \sup_\P \E^\P \bigg(\inf_\phi \phi_\tau - \int_t^\tau H(\alpha^\P,\beta^\P)\, dt \bigg) \leq \inf_\phi \phi_t,
\]
where the first inequality follows from the disintegration argument of Proposition \ref{corduality2} (ii), the middle equality follows from the duality result of Proposition \ref{corduality2} (i), and the final inequality follows from the functional It\^o formula and the PPDE.
Applying the duality result again implies there must be equality throughout.
\end{remark}

The combination of Corollary \ref{corduality15} and Proposition \ref{corduality2} implies our main duality result, Theorem \ref{thmsummary01} (i).


\subsection{Optimal probability measure}
If the optimum of the dual problem is attained, we can characterise the optimal probability measure using Theorem \ref{thmsummary01} (ii), which is restated as Proposition \ref{propduality3} below. 
\begin{proposition}\label{propduality3}
Let $\tilde \P$ be an optimal probability measure for the optimal transport problem, with characteristics $(\tilde \alpha, \tilde \beta)$ on $[t,1]$.
Let $(\psi^n, \phi^n)_{n\in\Z^+}$ be an optimising sequence of \eqref{eqsummary03} and \eqref{eqsummary042}. Then we have the following convergences in probability on $\Omega\times[t,1]$:
\begin{gather}
\Dt\phi^n + H^*\left(\Dx\phi^n, \frac{1}{2}\Dxx\phi^n\right) \stackrel{d\tilde\P\times dt}{\to} 0,\label{eqoptidual11}\quad
\phi^n+\psi^n \stackrel{d\tilde\P}{\to} 0.
\end{gather}
Moreover, suppose that $H$ is strongly convex, i.e., there exists a constant $C>0$ such that for all $t, \omega, \alpha, \beta, \alpha', \beta'$ and any subderivative $\nabla H$, if $H(\alpha, \beta)$ is finite then
\[
H(\alpha', \beta')\geq H(\alpha, \beta) + \langle\nabla H(\alpha, \beta) , (\alpha'-\alpha, \beta'-\beta)\rangle + C (\norm{\alpha'-\alpha}^2+\norm{\beta'-\beta}^2),
\]
where $\norm{\cdot}$ is the $L^2$ norm on $\R^d$ and $\S^d$. Then the following holds on $\Omega\times[t,1]$,
\begin{gather*}
\nabla H^*\left(\Dx\phi^n, \frac{1}{2}\Dxx\phi^n\right) \stackrel{d\tilde\P\times dt}{\to} (\tilde \alpha, \tilde \beta).
\end{gather*}
\end{proposition}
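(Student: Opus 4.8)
The plan is to bound the dual objective computed along the optimising sequence from above by the (attained) primal value minus a sum of three nonnegative error terms — the terminal slack $\phi^n(1,\cdot)+\psi^n$, the PPDE slack $-\big(\Dt\phi^n+H^*(\Dx\phi^n,\frac{1}{2}\Dxx\phi^n)\big)$, and the pointwise Fenchel--Young gap for the pair $(H,H^*)$ evaluated along $(\tilde\alpha,\tilde\beta)$ — and then to read off that all three must vanish. First I would record that, since $V<+\infty$ and $H$ is coercive (Assumption~\ref{asscostfunction01}(iii)) while $\Omega\times[t,1]$ carries a finite measure, the optimal $\tilde\P$ has integrable characteristics, i.e.\ $\tilde\P\in\Pset^1_t(\rho_t)$, so the functional It\^o formula of Definition~\ref{defpathderiv} applies to every $\phi^n$ under $\tilde\P$; moreover, because $\tilde\P$ attains the primal infimum, $V=F^*(\tilde\P)+\E^{\tilde\P}\int_t^1 H(\tilde\alpha_s,\tilde\beta_s)\,ds$.

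Next I would apply the functional It\^o formula to $\phi^n$ on $[t,1]$, substitute $dX_s=\tilde\alpha_s\,ds+dM^{\tilde\P}_s$ and $d\langle X\rangle_s=\tilde\beta_s\,ds$, and take $\E^{\tilde\P}$; the $dM^{\tilde\P}$-integral contributes zero expectation since $\Dx\phi^n$ is bounded and $\E^{\tilde\P}\langle M^{\tilde\P}\rangle_1<+\infty$. As $\tilde\P$ has time-$t$ marginal $\rho_t$ on $\Omega_t$, the left-hand side reproduces $\int_{\Omega_t}\phi^n(t,\cdot)\,d\rho_t$. Introducing the nonnegative slacks $h^n:=\phi^n(1,\cdot)+\psi^n$ and $g^n:=-\big(\Dt\phi^n+H^*(\Dx\phi^n,\frac{1}{2}\Dxx\phi^n)\big)$ (nonnegative by the dual feasibility constraints) together with the Fenchel--Young gap
\[
e^n:=H(\tilde\alpha_s,\tilde\beta_s)+H^*\!\left(\Dx\phi^n,\tfrac{1}{2}\Dxx\phi^n\right)-\left(\Dx\phi^n\cdot\tilde\alpha_s+\tfrac{1}{2}\Dxx\phi^n:\tilde\beta_s\right)\ \ge\ 0,
\]
the It\^o integrand rewrites as $H(\tilde\alpha_s,\tilde\beta_s)-g^n-e^n$; combining this with $\E^{\tilde\P}\phi^n(1,X)=-\E^{\tilde\P}\psi^n+\E^{\tilde\P}h^n$ and the elementary inequality $\E^{\tilde\P}\psi^n-F(\psi^n)\le F^*(\tilde\P)$ gives, for $V^n:=-F(\psi^n)-\int_{\Omega_t}\phi^n(t,\cdot)\,d\rho_t$,
\[
V^n\ \le\ V-\E^{\tilde\P}h^n-\E^{\tilde\P}\int_t^1(g^n+e^n)\,ds .
\]
Since $(\psi^n,\phi^n)$ is an optimising sequence, $V^n\to\VV=V$, so each of the nonnegative quantities $\E^{\tilde\P}h^n$, $\E^{\tilde\P}\int_t^1 g^n\,ds$ and $\E^{\tilde\P}\int_t^1 e^n\,ds$ tends to $0$; $L^1$-convergence of a nonnegative quantity to $0$ implies convergence in probability, which is precisely \eqref{eqoptidual11}.

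For the last claim, strong convexity of $H$ forces $H^*$ to be differentiable; writing $(\alpha^*,\beta^*):=\nabla H^*(\Dx\phi^n,\frac{1}{2}\Dxx\phi^n)$ one has $(\Dx\phi^n,\frac{1}{2}\Dxx\phi^n)\in\partial H(\alpha^*,\beta^*)$, and plugging the strong convexity inequality with $(\alpha',\beta')=(\tilde\alpha_s,\tilde\beta_s)$ and base point $(\alpha^*,\beta^*)$ into the definition of $e^n$, using Legendre duality to eliminate the term $H(\alpha^*,\beta^*)$, leaves
\[
e^n\ \ge\ C\big(\norm{\tilde\alpha_s-\alpha^*}^2+\norm{\tilde\beta_s-\beta^*}^2\big).
\]
Since $\E^{\tilde\P}\int_t^1 e^n\,ds\to 0$ from the previous step, the right side tends to $0$ in $L^1(d\tilde\P\times dt)$, whence $\nabla H^*(\Dx\phi^n,\frac{1}{2}\Dxx\phi^n)\to(\tilde\alpha,\tilde\beta)$ in $L^2$, and in particular in probability, on $\Omega\times[t,1]$.

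I expect the main obstacle to be the sign bookkeeping that makes the duality gap $V-V^n$ dominate the sum of the three slacks, together with the justification that the stochastic integral in the functional It\^o formula is a genuine martingale under $\tilde\P$ — which is exactly where the coercivity of $H$ and the finiteness of $V$ enter, via $\tilde\P\in\Pset^1_t(\rho_t)$. Everything else reduces to the Fenchel--Young inequality, the dual constraints, and the definition of an optimising sequence.
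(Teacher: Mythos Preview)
Your proposal is correct and follows essentially the same approach as the paper's proof: apply the functional It\^o formula to $\phi^n$ under $\tilde\P$, decompose the resulting identity into the terminal slack, the PPDE slack, the Fenchel--Young gap, and the $F^*$--$F$ slack (which you absorb via $\E^{\tilde\P}\psi^n-F(\psi^n)\le F^*(\tilde\P)$ rather than keeping as a separate fourth term, a purely cosmetic difference), and then use $V^n\to V$ to force all nonnegative terms to zero; the strong convexity step is identical. Your explicit justification that $\tilde\P\in\Pset^1_t(\rho_t)$ and that the stochastic integral is a true martingale is a useful addition that the paper leaves implicit.
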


\begin{proof}
For each $\epsilon>0$, we have the following inequality for all large enough $n$
\[
F^*(\tilde\P)+\E^{\tilde \P} \int_t^1 H(\tilde \alpha_s,\tilde \beta_s) \,ds \leq -F(\psi^n)-\int_{\Omega_t} \phi^n(t,\cdot)\, d\rho_t + \epsilon.
\]
Applying the functional It\^o formula and rearranging, this yields
\begin{align}
&\E^{\tilde \P} \bigg(\int_t^1 H(\tilde \alpha_s,\tilde \beta_s) + H^*\left(\Dx\phi^n_s, \frac{1}{2}\Dxx\phi^n_s\right) - \tilde\alpha_s\cdot \Dx\phi^n_s - \frac{1}{2}\tilde \beta_s :\Dxx\phi^n_s \,ds \bigg)
\label{eqoptimumcondition1} \\
&\qquad\qquad+\E^{\tilde \P} \bigg(\int_t^1 -\Dt\phi^n_s - H^*\left(\Dx\phi^n_s, \frac{1}{2}\Dxx\phi^n_s\right) ds \bigg)\nonumber \\
&\qquad\qquad+\E^{\tilde \P} (\phi^n_1+\psi^n)+(F^*(\tilde\P)+F(\psi^n)-\E^{\tilde \P} \psi^n) \leq \epsilon.\nonumber
\end{align}
By Fenchel's inequality, as well as the conditions,
\begin{gather}
\tilde\phi(1,\cdot)\geq -\tilde\psi, \quad \Dt\tilde\phi + H^*\left(\Dx\tilde\phi, \frac{1}{2}\Dxx\tilde\phi\right)\leq 0,\label{eqoptimumcondition2}
\end{gather}
each of the four terms in \eqref{eqoptimumcondition1} (in particular, terms inside the expectations and integrals) are non-negative. Hence they must all be bounded by $\epsilon$, which implies the required convergences \eqref{eqoptidual11}, as well as
\begin{gather}\label{eqoptimumcondition3}
0\leq\E^{\tilde \P} \bigg(\int_t^1 H(\tilde \alpha_s,\tilde \beta_s) + H^*\left(\Dx\phi^n_s, \frac{1}{2}\Dxx\phi^n_s\right)- \tilde\alpha_s\cdot \Dx\phi^n_s - \frac{1}{2}\tilde \beta_s :\Dxx\phi^n_s \,ds \bigg)\leq\epsilon. 
\end{gather}

If $H$ is strongly convex\footnote{Note that our definition of strongly convex does not require $H$ to be differentiable, since only subderivatives are used. Nevertheless, it implies that $H$ is strictly convex and thus $H^*$ is differentiable}, then $H^*$ is differentiable and we can define $(\alpha^n,\beta^n)$ such that
\[
(\alpha^n,\beta^n)=\nabla H^*\left(\Dx\phi^n, \frac{1}{2}\Dxx\phi^n\right), \quad \left(\Dx\phi^n, \frac{1}{2}\Dxx\phi^n\right)=\nabla H(\alpha^n,\beta^n).
\]
Hence, by the definition of convex conjugate and the strong convexity of $H$,
\begin{align}
&H(\tilde \alpha_s,\tilde \beta_s) + H^*\left(\Dx\phi^n_s, \frac{1}{2}\Dxx\phi^n_s\right) - \tilde\alpha_s\cdot \Dx\phi^n_s - \frac{1}{2}\tilde \beta_s :\Dxx\phi^n_s \nonumber\\
={}&H(\tilde \alpha_s,\tilde \beta_s) - H( \alpha_s^n, \beta_s^n) - (\tilde\alpha_s-\alpha_s^n)\cdot \Dx\phi^n_s - \frac{1}{2}(\tilde \beta_s-\beta_s^n) :\Dxx\phi^n_s \nonumber\\
\geq{}&C\left(\norm{\tilde\alpha_s-\alpha_s^n}^2+\norm{\tilde\beta_s-\beta_s^n}^2\right). \label{eqoptimumcondition4}
\end{align}
Combining \eqref{eqoptimumcondition3} and \eqref{eqoptimumcondition4} implies that $(\alpha^n,\beta^n)\to (\tilde \alpha, \tilde \beta)$ in $d\tilde\P\times dt$, completing the proof.
\end{proof}

\begin{remark}
If the optimum of the dual problem is obtained by a pair $(\tilde\psi,\tilde\phi)$, then all convergence results in Proposition \ref{propduality3} can be replaced by equalities.
\end{remark}

\subsection{Path dependent PDE}\label{secppde}

For this subsection as well as the next, we impose additional assumptions on $\psi$ and $H$, which is required for the well-posedness of the PPDE.
\begin{assumption}\label{asscostfuncionbb}
(i) For each $(\alpha,\beta)$, $H(\cdot,\cdot,\alpha,\beta)$ is $\FF$-progressively measurable.\\
(ii) The effective domain of $H$ is given by
$\operatorname{dom}H=\Lambda\times U$, where $U\subseteq \R^d\times \S^d_{++}$ is compact. In other words, for all $(t,\omega)\in\Lambda$, $H(t,\omega,\alpha,\beta)<+\infty$ if and only if $(\alpha,\beta)\in U$.\\
(iii) Furthermore, $H$ is bounded and uniformly continuous within its effective domain.\\
(iv) The function $\psi$ is bounded and uniformly continuous and has a common modulus of continuity with $H$.
\end{assumption}

\begin{remark}
Assumptions \ref{asscostfunction01} and \ref{asscostfuncionbb} have the following implications on $H^*(t,\omega,p,q)$:\\
(a) For fixed $(p,q)$, $H^*(\cdot,p,q)$ is $\FF$-progressively measurable and $|H^*(\cdot,p,q)|$ is bounded;\\
(b) $H^*$ is uniformly elliptic, i.e., there exists a constant $c>0$ such that $H^*(\cdot,q_1)-H^*(\cdot,q_2)\geq c\tr(q_1-q_2)$ for any $q_1\geq q_2$;\\
(c) $H^*$ is uniformly Lipschitz continuous in $(p,q)$;\\
(d) $H^*$ is uniformly continuous in $(t,\omega)$.

Condition (a) is satisfied since $H$ is progressively measurable and $H^*$ can be written as the supremum of a countable family of progressively measurable functions. Also note that $H^*(\cdot,0,0)$ is bounded. For (b), $H^*$ is indeed uniformly elliptic since $H(\cdot,\beta)$ is finite only if the eigenvalues of $\beta$ are uniformly bounded below by a positive constant. For (c), since the effective domain of $H(t,\omega,\cdot)$ is a bounded set $U$, $\nabla H^*(t,\omega,\cdot)$ must also be bounded. Finally, (d) holds because $H(\cdot,\alpha,\beta)$ is uniformly continuous on $\Lambda$.
\end{remark}

In order to obtain the uniqueness of viscosity solutions, we require an additional technical assumption  (see \cite{ekren2016viscosity2}, Assumption 3.5).
For all $\epsilon>0$, $n\geq 0$ and $0\leq T< T'\leq 1$, denote 
\[
\Pi^\epsilon_n(T,T'):=\{\pi_n=(t_i,x_i)_{1\leq i\leq n}: T<t_1<\cdots<t_n<T', |x_i|\leq \epsilon, 1\leq i\leq n \}.
\]
For all $\pi_n\in \Pi^\epsilon_n(T,T')$, let $\omega^{\pi_n}\in \Omega^T \cap \Omega_{T'}$ denote the linear interpolation of 
\[
(0,0),\ (T,0),\  (t_i,\sum_{j=1}^i x_j)_{1\leq i\leq n}\ \text{ and }\ (1,\sum_{j=n}^i x_j).
\]
\begin{assumption}\label{asscostfuncioncc}
There exists $0=T_0<\cdots<T_N=1$ such that the following holds for each $i=0,\ldots,N-1$. For any $\epsilon, n, p, q$, and $\omega\in\Omega_{T_i}+\Omega^{T_{i+1}}$, the functions $\pi_n \to \psi(\omega+\omega^{\pi_n})$ and  $\pi_n \to H^*(t, (\omega+\omega^{\pi_n})_{\cdot\wedge t},p,q)$ are uniformly continuous in $\Pi^\epsilon_n$.
\end{assumption}

Now we will present Proposition \ref{thmsummary01} (iii), which is restated here.
\begin{proposition}\label{thmppdesoln01}
Suppose that Assumptions \ref{asscostfunction01} and \ref{asscostfuncionbb} are satisfied. Then $J_{\psi,H}$ has the representation
\begin{align}\label{eqppde101}
J_{\psi,H}(t,\rho_t)=\sup_{\P\in\Pset^0_t(\rho_t)} -\E^\P \psi-\EP \int_t^1 H(\alpha^\P_s,\beta^\P_s) \,ds=\int_{\Omega_t} \phi_{\psi,H}(t,\cdot)\, d\rho_t.
\end{align}
where $\phi_{\psi,H}$ is a viscosity solution of the following path dependent PDE:
\begin{gather}\label{eqppde10}
\phi(1,\cdot)= -\psi \quad \text{and}  \quad \Dt\phi + H^*\left(\Dx\phi, \frac{1}{2}\Dxx\phi\right)= 0.
\end{gather}
Moreover, if Assumption \ref{asscostfuncioncc} is also satisfied, then $\phi_{\psi,H}$ is the unique viscosity solution of the PPDE \eqref{eqppde10}.
\end{proposition}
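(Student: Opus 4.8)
The plan is to set $\phi_{\psi,H}(t,\omega):=J_{\psi,H}(t,\delta_{\omega_{\cdot\wedge t}})$, after which Proposition~\ref{corduality2}(ii) already delivers the representation $J_{\psi,H}(t,\rho_t)=\int_{\Omega_t}\phi_{\psi,H}(t,\cdot)\,d\rho_t$ in \eqref{eqppde101}. It therefore remains to establish three facts about $\phi_{\psi,H}$: (A) it is bounded and uniformly continuous on $\Lambda$; (B) it is a viscosity solution of the PPDE \eqref{eqppde10}; and (C), under Assumption~\ref{asscostfuncioncc}, it is the only one.

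For (A), I would use Assumption~\ref{asscostfuncionbb}: since $\operatorname{dom}H=\Lambda\times U$ with $U\subset\R^d\times\S^d_{++}$ compact, every $\P\in\Pset^0_t(\delta_{\omega_{\cdot\wedge t}})$ with finite cost has characteristics taking values in $U$, hence uniformly bounded drift and diffusion. Standard semimartingale moment estimates together with the boundedness of $\psi$ and $H$ give $\norm{\phi_{\psi,H}}_\infty<\infty$, while a coupling of two such semimartingales started from nearby stopped paths, combined with the common modulus of continuity of $\psi$ and $H$ from Assumption~\ref{asscostfuncionbb}(iii)--(iv), yields a modulus of continuity for $\phi_{\psi,H}$ in the metric $d_\infty$ on $\Lambda$; progressive measurability follows from Assumption~\ref{asscostfuncionbb}(i) and the fact that the supremum may be restricted to a countable family.

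For (B), the terminal condition $\phi_{\psi,H}(1,\cdot)=-\psi$ is immediate. The key intermediate step is the dynamic programming principle: for every stopping time $t\le\tau\le 1$,
\[
\phi_{\psi,H}(t,\omega)=\sup_{\P\in\Pset^0_t(\delta_{\omega_{\cdot\wedge t}})}\E^\P\Big(\phi_{\psi,H}(\tau,X_{\cdot\wedge\tau})-\int_t^\tau H(\alpha^\P_s,\beta^\P_s)\,ds\Big).
\]
This is exactly the argument outlined in the remark following Proposition~\ref{corduality2}: the inequality ``$\le$'' comes from disintegrating an arbitrary competitor along $X_{\cdot\wedge\tau}$ as in the proof of Proposition~\ref{corduality2}(ii); for ``$\ge$'' one writes $\phi_{\psi,H}(\tau,\cdot)=\inf_\phi\phi(\tau,\cdot)$ by the duality of Corollary~\ref{corduality15}, applies the functional It\^o formula to any admissible smooth supersolution $\phi$ to obtain $\E^\P(\phi(\tau,X)-\int_t^\tau H\,ds)\le\phi(t,\omega)$, and then invokes duality once more to close the loop, with no measurable selection needed. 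Given this and the regularity of (A), together with the properties of $H^*$ recorded in the remark after Assumption~\ref{asscostfuncionbb} (boundedness, uniform ellipticity, uniform Lipschitz continuity in $(p,q)$, uniform continuity in $(t,\omega)$), $\phi_{\psi,H}$ falls within the PPDE viscosity framework of \cite{ekren2016viscosity1,ekren2016viscosity2}. The sub/supersolution inequalities then follow in the usual way: if $\phi\in C^{1,2}_t(\Lambda)$ touches $\phi_{\psi,H}$ from above at $(t,\omega)$, localizing $\tau$ and testing the DPP against a near-optimal $\P$ together with $\alpha\cdot\Dx\phi+\tfrac12\beta:\Dxx\phi-H(\alpha,\beta)\le H^*(\Dx\phi,\tfrac12\Dxx\phi)$ gives $\Dt\phi+H^*(\Dx\phi,\tfrac12\Dxx\phi)\ge0$ at $(t,\omega)$; if $\phi$ touches from below, testing against the constant control in $U$ attaining $H^*(\Dx\phi(t,\omega),\tfrac12\Dxx\phi(t,\omega))$ (a maximiser exists by compactness of $U$ and upper semicontinuity of the objective) and using the uniform continuity of $H^*$ gives the reverse inequality.

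For (C), Assumption~\ref{asscostfuncioncc} is precisely Assumption~3.5 of \cite{ekren2016viscosity2} transcribed for $\psi$ and $H^*$, so the comparison principle there applies to \eqref{eqppde10}; comparing a viscosity subsolution with a viscosity supersolution that share the terminal datum $-\psi$ shows \eqref{eqppde10} has at most one viscosity solution, which is therefore $\phi_{\psi,H}$. I expect the main difficulties to lie in step (B): making the path-dependent dynamic programming principle rigorous --- including the measurability and regularity needed to make sense of $\E^\P\phi_{\psi,H}(\tau,X_{\cdot\wedge\tau})$ --- and carefully aligning our class $C^{1,2}_t(\Lambda)$ of test functions and the functional It\^o formula with the definition of viscosity solution used in \cite{ekren2016viscosity1,ekren2016viscosity2}, so that their comparison theorem can legitimately be cited; the a priori estimates of (A) and the compactness of $U$ make everything else routine.
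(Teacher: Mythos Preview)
Your proposal is correct and follows essentially the same route as the paper. The paper's own proof is extremely terse: it invokes Proposition~\ref{corduality2} for the integral representation, defers the viscosity-solution property to the argument of \cite{ekren2016viscosity1}, Proposition~4.7, and cites \cite{ekren2016viscosity2}, Theorem~4.1, for uniqueness under Assumption~\ref{asscostfuncioncc}; your sketch simply unpacks these citations, supplying the regularity check (A) and the duality-based dynamic programming argument that the paper itself outlines in the remark following Proposition~\ref{corduality2}.
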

\begin{proof}
Following Proposition \ref{corduality2}, it suffices to show that the solution the control problem
\[
J_{\psi,H}(t,\delta(x_{\cdot\wedge t}))=\sup_{\P\in\Pset^0_t(\delta(x_{\cdot\wedge t}))} -\E^\P \psi-\EP \int_t^1 H(\alpha^\P_s,\beta^\P_s) \,ds
\]
is a viscosity solution of the PPDE \eqref{eqppde10}. This can be proven by following the same argument of \cite{ekren2016viscosity1}, Proposition 4.7, which is an adaptation of the standard argument for deriving HJB equations in the PPDE settings.
%
If Assumption \ref{asscostfuncioncc} also holds, then we have the full comparison principle for the PPDE \eqref{eqppde10} and $\phi_{\psi,H}$ must be the unique viscosity solution (see \cite{ekren2016viscosity2}, Theorem 4.1).
\end{proof}

\begin{remark}
As mentioned in \cite{ekren2016viscosity2} Section 8.3, Assumption \ref{asscostfuncioncc} is a technical assumption only used for the construction of a family of viscosity solutions to an approximating family of path-frozen PDEs. It is likely that in control problems such as the one in Proposition \ref{thmppdesoln01}, one can directly construct the required family of viscosity solutions via a sequence of approximating control problems and their HJB equations, without the need for Assumption \ref{asscostfuncioncc}. 

Another approach would be to utilise the comparison principle result of \cite{ren2017comparison}, which requires a different metric on $\Lambda$ in the definition of uniform continuity. It also does not require Assumption \ref{asscostfuncioncc}. We will defer these potential approaches to future research, and refer the interested readers to  \cite{ekren2016viscosity1, ekren2016viscosity2, ren2017comparison} for detailed discussions of viscosity solutions to fully nonlinear PPDEs and various versions of regularity assumptions.
%
\end{remark}

\subsection{Dimension reduction}\label{seclocalisation}
One particular interesting case is when $\rho_t$ is a Dirac measure, i.e., $\rho_t=\delta_{t,x}=\delta(X_{\cdot\wedge t}=x_{\cdot\wedge t})$. Then
\begin{gather}\label{eqjj11}
J_{\psi,H}(t,\delta_{t,x})=\sup_{\P\in\Pset^0_t(\delta_{t,x})}-\E^\P \psi-\EP \int_t^1 H(\alpha^\P_s,\beta^\P_s) \,ds
=\inf_{\phi\in C^{1,2}_t(\Lambda)}  \phi(t,x_{\cdot\wedge t}),\\
\text{subject to}\quad 
\phi(1,\cdot)\geq -\psi \quad \text{and}  \quad \Dt\phi + H^*\left(\Dx\phi, \frac{1}{2}\Dxx\phi\right)\leq 0.\label{eqjj12}
\end{gather}
For $x\in\Omega$, let us use the shorthand $J(t,x):=J_{\psi,H}(t,\delta_{t,x})$.

In general, $J(t,\cdot)$ only depends on the path up to time $t$ and can be shown to be $\Filt_t$-measurable. But in all practical examples, both the payoff function $\psi$ and the cost function $H$ only depend on certain features of the path rather than the entire path, and can often be parametrised by a finite number of state variables. Intuitively, the solution $J(t,\cdot)$ should be Markovian with respect to in terms of those state variables. This is useful in practice since it allows us to identify the state variables driving the relevant features of the path, and reduces an infinite dimensional PPDE to a finite dimensional PDE.

This idea of dimension reduction is well-known in numerical methods for BSDEs (see e.g., \cite{gobet2005regression,zhang2004numerical}) and path-dependent stochastic control problems (see e.g., \cite{tan2014discrete}). The usual approach is to identify an ``updating function'' \cite{brunick2013mimicking} or equivalent which updates the state variables according the evolution of the paths. In this paper, we will formalise this notion in a slightly different approach. Here, we introduce a so-called ``semifiltration'', which is an appropriate class of $\sigma$-algebra smaller than $\FF$ that captures the information required for the solution. Typically, this can simply be the $\sigma$-algebras generated by the state variables, but our construction also allows for more general cases. Moreover, as shown in Remark \ref{remminimalsemifiltration}, our approach enables the explicit construction of the ``minimal'' semifiltration for an arbitrary problem.

\begin{definition}[Semifiltration]\label{deflocfilt}
(i) A collection of $\sigma$-algebras $\GG=\{\Gfilt_t: t\in[0,1]\}$ is called a \emph{semifiltration} if the following properties hold:
\begin{itemize}
\item for every $t$, $\Gfilt_t \subseteq \Filt_t$;
\item for all $t<u$, $\Gfilt_u \subseteq \Gfilt_t \vee \Filt_{[t,u]} $, where $\Filt_{[t,u]}$ is a $\sigma$-algebra defined by
\[
\Filt_{[t,u]}= \sigma(\{X_s-X_t: s\in[t,u]\}).
\]
\end{itemize}
(ii) A pair of functions $(\psi, H) \in C_b(\Omega)\times C_b(\Lambda\times U)$ is said to be \emph{adapted} to a semifiltration $\GG$ if $\psi$ is $\Gfilt_1$-measurable and for every $t\in[0,1]$, $H(t,\cdot,\cdot)$ is $\Gfilt_t \otimes \Borel(U)$-measurable.

\end{definition}

In general, a semifiltration $\GG$ is not a filtration. 
It has the following interpretation. As we move forward in time, $\GG$ collects more information about the canonical process $X$, in the same way that $\FF$ does. At the same time, $\GG$ is also allowed to ``forget'' information, which occurs whenever the inclusion $\Gfilt_u \subseteq \Gfilt_t \vee \Filt_{[t,u]} $ is strict. Moreover, $\GG$ is not allowed to ``recall'' information once it is ``forgotten''.

For the numerical implementation of many practical problems, $\GG$ has the advantage of being much smaller than $\FF$. For example, if the problem is known to be Markovian, then it suffices to keep track of the current value of the state variable $\Gfilt_t=\sigma(X_t)$, as opposed to the history of the entire path $\Filt_t$. Ideally, we would choose $\GG$ to be as small as possible, while still retaining all dependent variables required for the solution. 

The condition $\Gfilt_u \subseteq \Gfilt_t \vee \Filt_{[t,u]} $ can be interpreted as a time consistency condition on semifiltrations. Given $\Gfilt_t$ and all possible information on the time period $[t,u]$, we have enough to construct $\Gfilt_u$. This notion is formalised in the following crucial lemma.
\begin{lemma}\label{lemlocalfilt}
Let $\GG$ be a semifiltration and $\XX, \YY$ be Polish spaces. Fix $t\in[0,1]$ and consider a function $f\in C_b(\Omega\times \XX, \YY)$ that is $((\Gfilt_t\vee\Filt_{[t,1]})\otimes \Borel(\XX), \Borel(\YY))$-measurable. Define the map $\Gamma:\Omega\to C_b(\Omega^t\times \XX; \YY)$ via
$$\omega \stackrel{\Gamma}{\rightarrow} f(\omega_{\cdot\wedge t}+ \cdot, \cdot ).$$
Then $\Gamma$ is $\Gfilt_t$-measurable.
\end{lemma}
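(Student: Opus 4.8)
The plan is to reduce the assertion to the measurability, for each fixed value of the ``frozen'' variables $(\eta,x)\in\Omega^t\times\XX$, of the scalar evaluation $\omega\mapsto f(\omega_{\cdot\wedge t}+\eta,x)$, and then to prove this reduced claim by a functional monotone class argument built on the splitting $\Omega=\Omega_t\oplus\Omega^t$.

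The starting point is elementary: for $\omega\in\Omega$ and $\eta\in\Omega^t$ the path $\omega':=\omega_{\cdot\wedge t}+\eta$ satisfies $\omega'_s=\omega_s$ for $s\le t$ (since $\eta$ vanishes on $[0,t]$) and $\omega'_s-\omega'_t=\eta_s$ for $s\in[t,1]$ (using $\eta_t=0$). Consequently $\I_A(\omega')=\I_A(\omega)$ for every $A\in\Filt_t$, and $\I_B(\omega')$ is a function of $\eta$ alone for every $B\in\Filt_{[t,1]}=\sigma(X_s-X_t:s\in[t,1])$; as $\Gfilt_t\subseteq\Filt_t$, the first statement in particular holds for $A\in\Gfilt_t$.

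Next I would fix $h\in C_b(\YY)$ and apply the functional monotone class theorem to $g:=h\circ f$, which is bounded and $((\Gfilt_t\vee\Filt_{[t,1]})\otimes\Borel(\XX))$-measurable. This product $\sigma$-algebra is generated by the $\pi$-system of sets $(A\cap B)\times C$ with $A\in\Gfilt_t$, $B\in\Filt_{[t,1]}$, $C\in\Borel(\XX)$, and for $g=\I_{(A\cap B)\times C}$ the observation above gives $g(\omega_{\cdot\wedge t}+\eta,x)=\I_A(\omega)\,\kappa(\eta,x)$ with $\kappa$ independent of $\omega$; hence $\omega\mapsto g(\omega_{\cdot\wedge t}+\eta,x)$ is $\Gfilt_t$-measurable for every $(\eta,x)$. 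The class of bounded product-measurable $g$ enjoying this property is a vector space, contains the constants, is stable under bounded monotone limits, and contains the multiplicative generating class, so the theorem yields the property for all bounded product-measurable $g$, in particular for $h\circ f$. Since $\YY$ is Polish, its Borel $\sigma$-algebra is generated by a countable subfamily of $C_b(\YY)$ (e.g.\ $y\mapsto d_\YY(y,y_j)\wedge 1$ over a countable dense set), whence $\omega\mapsto f(\omega_{\cdot\wedge t}+\eta,x)$ is $\Gfilt_t/\Borel(\YY)$-measurable for each $(\eta,x)$.

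Finally I would assemble these pointwise statements into measurability of $\Gamma$. First, $\Gamma(\omega)=f(\omega_{\cdot\wedge t}+\cdot,\cdot)$ does lie in $C_b(\Omega^t\times\XX;\YY)$, being the composition of the continuous map $(\eta,x)\mapsto(\omega_{\cdot\wedge t}+\eta,x)$ with the bounded continuous function $f$. Since $\Omega^t\times\XX$ is separable, a bounded continuous $\YY$-valued function on it is determined by its restriction to a fixed countable dense set, so $C_b(\Omega^t\times\XX;\YY)$ is naturally equipped with the $\sigma$-algebra generated by the evaluation maps $F\mapsto F(\eta,x)$; measurability of $\Gamma$ with respect to $\Gfilt_t$ is then precisely the family of measurability statements just proved. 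I expect the only genuine work to be the monotone class step — isolating the correct generating $\pi$-system and verifying that the factorisation $g(\omega_{\cdot\wedge t}+\eta,x)=\I_A(\omega)\kappa(\eta,x)$ survives the class operations — together with the routine but mildly delicate passage from evaluationwise measurability to function-space-valued measurability, where separability of $\Omega^t\times\XX$ is invoked.
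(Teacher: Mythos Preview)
Your proposal is correct and follows the same overall architecture as the paper: reduce measurability of $\Gamma$ to the $\Gfilt_t$-measurability of each evaluation $\omega\mapsto f(\omega_{\cdot\wedge t}+\eta,x)$, then assemble these via separability of $\Omega^t\times\XX$. The difference lies in how the pointwise step is argued. The paper proceeds by direct sectioning: starting from $\{f(\omega,x)\in D\}\in\Gfilt_t\vee\Filt_{[t,1]}$, it intersects with the $\Filt_{[t,1]}$-measurable slice $\{\omega-\omega_{\cdot\wedge t}=\omega'\}$, observes that on this slice $f(\omega,x)=f(\omega_{\cdot\wedge t}+\omega',x)$ depends only on $\omega_{\cdot\wedge t}$, and then invokes the product isomorphism $\Gfilt_t\vee\Filt_{[t,1]}\cong\bar\Gfilt_t\otimes\Borel(\Omega^t)$ to project out the $\Omega^t$-section. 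Your monotone class route is a valid alternative --- slightly longer but arguably more elementary, since it avoids having to justify that isomorphism explicitly and instead builds the conclusion up from generators. One minor imprecision worth tightening: you say $C_b(\Omega^t\times\XX;\YY)$ is ``naturally equipped with the $\sigma$-algebra generated by the evaluation maps'', whereas the paper uses the Borel $\sigma$-algebra of the sup norm; since this $C_b$ space need not be separable, the two can differ. This does no damage to your argument: the paper's assembly step (and yours, once made precise) only requires that preimages of sup-norm balls are countable intersections of evaluation preimages, which follows from separability of the \emph{domain} $\Omega^t\times\XX$ and continuity of the elements of $C_b$, not from separability of $C_b$ itself.
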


\begin{proof}
Note that the space $C_b(\Omega^t\times \XX; \YY)$ is endowed with the supremum norm $\norm{\cdot}_\infty$ and the associated Borel $\sigma$-algebra.

For any open ball $D\subset C_b(\Omega^t\times \XX; \YY)$ with centre $g$ and radius $r$, we have to check that $\Gamma^{-1}(D)\in \Gfilt_t$. Due to the separability of $\Omega^t\times \XX$, we can write
\begin{gather}\label{eqlocalfiltlem1}
\Gamma^{-1}(D) = \bigcap_{(\omega',x)\in \mathcal S} \Gamma_{(\omega',x)}^{-1}(D_{(\omega',x)}),
\end{gather}
where $\mathcal S$ is a countable dense subset of $\Omega^t\times \XX$, the map $\Gamma_{(\omega',x)}:\Omega \to \YY$ is defined by
\[
\omega \to f(\omega_{\cdot\wedge t}+\omega',x)
\]
and $D_{(\omega',x)}\subset\YY$ is an open ball with centre $g(\omega',x)$ and radius $r$. By \eqref{eqlocalfiltlem1}, it suffices to check that $\Gamma_{(\omega',x)}^{-1}(D_{(\omega',x)})\in \Gfilt_t$ for all $(\omega',x)\in\Omega^t\times \XX$.

Now since $f^{-1}(D_{(\omega',x)})\in (\Gfilt_t\vee\Filt_{[t,1]})\otimes \Borel(\XX)$, by taking the $x$-section of this set, we have $f^{-1}(D_{(\omega',x)},x)\in \Gfilt_t\vee\Filt_{[t,1]}$. Then we have the following implications
\begin{align}
&\{\omega:f(\omega,x)\in D_{(\omega',x)}\}\in \Gfilt_t\vee\Filt_{[t,1]} \nonumber\\
\implies\quad &\{\omega:f(\omega,x)\in D_{(\omega',x)}, \omega-\omega_{\cdot\wedge t}=\omega' \}\in \Gfilt_t\vee\Filt_{[t,1]}\nonumber\\
\iff\quad &\{\omega:f(\omega_{\cdot\wedge t}+\omega',x)\in D_{(\omega',x)}, \omega-\omega_{\cdot\wedge t}=\omega' \}\in \Gfilt_t\vee\Filt_{[t,1]} \label{eqlocalfiltlem2}\\ 
\implies\quad &\{\omega:f(\omega_{\cdot\wedge t}+\omega',x)\in D_{(\omega',x)} \}\in \Gfilt_t \label{eqlocalfiltlem3}.
\end{align}
The last implication relies on the fact that there is a natural isomorphism
$$\Gfilt_t \vee \Filt_{[t,1]}\cong\bar\Gfilt_t \otimes \Borel(\Omega^t),$$
where $\bar\Gfilt_t$ is a $\sigma$-algebra on $\Omega_t$. Since $f(\omega_{\cdot\wedge t}+\omega',x)\in D_{(\omega',x)}$ does not depend on the path after time $t$, \eqref{eqlocalfiltlem3} is obtained by sectioning \eqref{eqlocalfiltlem2} by $\omega-\omega_{\cdot\wedge t}=\omega'\in\Omega^t$.
Since \eqref{eqlocalfiltlem3} is equivalent to $\Gamma_{(\omega',x)}^{-1}(D_{(\omega',x)})\in \Gfilt_t$, this completes the proof.
\end{proof}

Now we are in a position to prove Theorem \ref{thmsummary01} (iv), which is restated below as Proposition \ref{thmlocalise}.
\begin{proposition}\label{thmlocalise}
Suppose that Assumptions \ref{asscostfunction01} and \ref{asscostfuncionbb} are satisfied. Let $(\psi,H)$ be adapted to a semifiltration $\GG=\{\Gfilt_t: t\in[0,1]\}$.
The we have the following results:
\\
(i) The map $J(t,\cdot)$ defined by \eqref{eqjj11} is $\Gfilt_t$-measurable;\\
(ii) If $\phi\in C^{1,2}_t(\Lambda)$ is a classical solution to the PPDE \eqref{eqppde10} on $[t,1]$, and $H$ is strictly convex in $(\alpha,\beta)$, then the pair $(\alpha,\beta)= \nabla H^*\left(\Dx\phi(t,\cdot), \frac{1}{2}\Dxx\phi(t,\cdot)\right)$ is $\Gfilt_t$-measurable.
\end{proposition}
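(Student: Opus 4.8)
\emph{Part (i).} The idea is to exhibit $J(t,\cdot)$ as a fixed Borel functional of the $\Gfilt_t$-measurable maps supplied by Lemma~\ref{lemlocalfilt}; everything else is bookkeeping around a shift-invariance. Fix a stopped path $x_{\cdot\wedge t}$ and write $\omega\in\Omega$ as $\omega_{\cdot\wedge t}+\omega'$, $\omega'\in\Omega^t$. The translation $\theta:\omega\mapsto\omega-\omega_{\cdot\wedge t}$ is a bijection from $\Pset^0_t(\delta_{t,x})$ onto a set $\Qset^t$ of semimartingale measures on $\Omega^t$ over $[t,1]$ with characteristics valued in $U$, and $\Qset^t$ does not depend on $x$; moreover $\theta$ leaves the characteristics unchanged, so for $\P\leftrightarrow\Q$ (with canonical process $Y$ on $\Omega^t$ having characteristics $(\hat\alpha^\Q,\hat\beta^\Q)$ under $\Q$),
\[
-\E^\P\psi-\E^\P\!\int_t^1\! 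H(\alpha^\P_s,\beta^\P_s)\,ds
=-\E^\Q\big[(\Gamma^t_\psi x)(Y)\big]-\E^\Q\!\int_t^1\!(\Gamma^t_H x)\big(Y_{\cdot\wedge s},s,\hat\alpha^\Q_s,\hat\beta^\Q_s\big)\,ds ,
\]
where $\Gamma^t_\psi x:=\psi(x_{\cdot\wedge t}+\cdot)\in C_b(\Omega^t)$ and $\Gamma^t_H x:(Z,u,\alpha,\beta)\mapsto H(u,x_{\cdot\wedge t}+Z_{\cdot\wedge u},\alpha,\beta)$ in $C_b(\Omega^t\times[t,1]\times U)$ are precisely the images of $x$ under the map of Lemma~\ref{lemlocalfilt} applied to $\psi$ and to $\tilde H(\omega,u,\alpha,\beta):=H(u,\omega_{\cdot\wedge u},\alpha,\beta)$. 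Taking the supremum over $\Q\in\Qset^t$ gives $J(t,x)=\Psi(\Gamma^t_\psi x,\Gamma^t_H x)$ for a fixed functional $\Psi$; for each $\Q$ the right-hand side above is Lipschitz in $(\Gamma^t_\psi x,\Gamma^t_H x)$ for the supremum norms, so $\Psi$ is lower semicontinuous, hence Borel. Since $\psi$ is $\Gfilt_1$-measurable with $\Gfilt_1\subseteq\Gfilt_t\vee\Filt_{[t,1]}$, and $\tilde H(\cdot,u,\alpha,\beta)$ is $\Gfilt_u$-measurable, hence $(\Gfilt_t\vee\Filt_{[t,1]})$-measurable, for every $(u,\alpha,\beta)$, Lemma~\ref{lemlocalfilt} gives that $x\mapsto(\Gamma^t_\psi x,\Gamma^t_H x)$ is $\Gfilt_t$-measurable. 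Composing with $\Psi$ proves (i).

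\emph{Part (ii): verification and reduction to a fibre.} Because $\phi\in C^{1,2}_t(\Lambda)$, the functional It\^o formula and Fenchel's inequality give $\phi(u,x_{\cdot\wedge u})\ge -\E^\P\psi-\E^\P\int_u^1 H(\alpha^\P_s,\beta^\P_s)\,ds$ for all $\P\in\Pset^0_u(\delta_{u,x})$ and $u\in[t,1]$; strict convexity of $H$ makes $H^*$ differentiable, so the pointwise Hamiltonian at $(\Dx\phi,\tfrac12\Dxx\phi)$ has the unique maximiser $(\alpha_s,\beta_s):=\nabla H^*(\Dx\phi(s,\cdot),\tfrac12\Dxx\phi(s,\cdot))$, and — using compactness of $U$ and the uniform ellipticity of Assumption~\ref{asscostfuncionbb} — the path-dependent martingale problem with these coefficients has a solution $\tilde\P$ attaining equality. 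Hence $\phi(u,\cdot)=J(u,\cdot)$ on $\Omega_u$ for every $u\in[t,1]$. Now fix $\omega^0\in\Omega_t$ and set $\phi^{\omega^0}(u,Z):=\phi(u,\omega^0+Z)$ for $Z\in\Omega^t$; composing the functional It\^o formula with the translation $Z\mapsto\omega^0+Z$, which adds a path constant on $[t,1]$, and invoking the uniqueness of path derivatives, yields $\Dx\phi^{\omega^0}(u,Z)=\Dx\phi(u,\omega^0+Z)$ and $\Dxx\phi^{\omega^0}(u,Z)=\Dxx\phi(u,\omega^0+Z)$; in particular $(\Dx\phi,\Dxx\phi)(t,\omega^0)=(\Dx\phi^{\omega^0},\Dxx\phi^{\omega^0})(t,0)$. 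Applying (i) at each $u\ge t$ and the identity $\phi(u,\cdot)=\Psi_u(\Gamma^u_\psi,\Gamma^u_H)$ shows, exactly as in the first paragraph, that $\phi^{\omega^0}$ depends on $\omega^0$ only through $(\Gamma^t_\psi\omega^0,\Gamma^t_H\omega^0)$. Hence $\Dx\phi(t,\cdot)$ and $\Dxx\phi(t,\cdot)$ factor through the map $\tilde G:\Omega_t\to\mathbb{R}^{\mathbb{N}}$ recording the values of $\Gamma^t_\psi\omega^0$ and $\Gamma^t_H\omega^0$ on a fixed countable dense set; $\tilde G$ is continuous, and $\Gfilt_t$-measurable by Lemma~\ref{lemlocalfilt}.

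\emph{Part (ii): measurability upgrade, and the main obstacle.} It remains to promote ``$\Dx\phi(t,\cdot)$ is constant on the fibres of $\tilde G$'' to ``$\Dx\phi(t,\cdot)$ is $\sigma(\tilde G)$-measurable'' (and likewise for $\Dxx\phi$). This is the only genuinely non-routine step: differentiation is not continuous for the supremum norm, so the factoring functional cannot be realised as a continuous map and one must argue at the level of $\sigma$-algebras. Write $f:=(\Dx\phi(t,\cdot),\Dxx\phi(t,\cdot)):\Omega_t\to\R^d\times\S^d$, continuous and constant on the fibres of the continuous map $\tilde G$ between the Polish spaces $\Omega_t$ and $\mathbb{R}^{\mathbb{N}}$. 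For closed $C$, the set $f^{-1}(C)$ is Borel and $\tilde G$-saturated, so $f^{-1}(C)=\tilde G^{-1}\big(\tilde G(f^{-1}(C))\big)$, while the analytic sets $\tilde G(f^{-1}(C))$ and $\tilde G(f^{-1}(C^{c}))$ are disjoint; by Luzin's separation theorem they are separated by a Borel set $B$, so $f^{-1}(C)=\tilde G^{-1}(B)\in\sigma(\tilde G)\subseteq\Gfilt_t$. Thus $f$ is $\Gfilt_t$-measurable, and composing with the continuous map $\nabla H^*$ shows $(\alpha,\beta)=\nabla H^*(\Dx\phi(t,\cdot),\tfrac12\Dxx\phi(t,\cdot))$ is $\Gfilt_t$-measurable. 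The two places that require care are the verification step — chiefly the weak existence of a path-dependent diffusion with the optimal characteristics $\nabla H^*(\Dx\phi,\tfrac12\Dxx\phi)$, which rests on compactness of $U$ and uniform ellipticity — and this final separation argument, for which Luzin's theorem is the right tool precisely because the solution's derivative is not continuous in the PPDE data.
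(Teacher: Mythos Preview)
Your Part~(i) is essentially the paper's argument: both freeze the past via Lemma~\ref{lemlocalfilt} to obtain a $\Gfilt_t$-measurable map $x\mapsto(\Gamma^t_\psi x,\Gamma^t_H x)$, then compose with a functional on $C_b(\Omega^t)\times C_b(\Lambda^t\times U)$ built from the translated control problem. The paper records this functional as continuous (each $\E^\P$ and $\E^\P\int_t^1\cdot\,ds$ is Lipschitz), you record it as lower semicontinuous; either suffices.

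Your Part~(ii) is correct but follows a genuinely different route. The paper avoids both the verification step and the descriptive-set-theory upgrade by a direct probing argument: once $\phi=J$ (via Proposition~\ref{thmppdesoln01}) so that $\phi(u,\cdot)$ is $\Gfilt_u$-measurable for every $u\ge t$, it fixes a measure $\P\in\Pset^1_t(\delta_{t,x})$ with \emph{constant} characteristics $(g,h)$ and computes the infinitesimal generator
\[
\lim_{u\searrow t}\frac{\E^\P\phi(u,X)-\phi(t,x)}{u-t}
=\Dt\phi(t,x)+g\cdot\Dx\phi(t,x)+\tfrac12 h:\Dxx\phi(t,x).
\]
By Lemma~\ref{lemlocalfilt} and Part~(i), the left side is $\Gfilt_t$-measurable in $x$ for each fixed $(g,h)$; since the right side is affine in $(g,h)$, varying $(g,h)$ over $\R^d\times\S^d_+$ extracts $\Dt\phi(t,\cdot)$, $\Dx\phi(t,\cdot)$, $\Dxx\phi(t,\cdot)$ individually as $\Gfilt_t$-measurable maps. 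Composition with the continuous $\nabla H^*$ finishes. This is shorter and more elementary than your fibre-constancy plus Luzin-separation argument, and it sidesteps the weak-existence issue for the path-dependent martingale problem with coefficients $\nabla H^*(\Dx\phi,\tfrac12\Dxx\phi)$ that your verification step incurs. Your approach, on the other hand, makes the factorisation through $(\Gamma^t_\psi,\Gamma^t_H)$ explicit and would transfer to settings where a generator identity like the one above is unavailable.
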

\begin{proof}
(i) First note that, since $(\psi,H)$ is bounded, continuous and adapted to $\GG$, the map $(\omega,u,v)\to (\psi(\omega),H(u,\omega_{\cdot\wedge s},v))$ where $u\geq t$ is bounded, continuous and $(\Gfilt_t \vee \Filt_{[t,1]})\otimes \Borel(\R\times U)$ measurable. Hence by Lemma \ref{lemlocalfilt},
the map $\Gamma:\Omega\to C_b(\Omega^t)\times C_b(\Lambda^t\times U)$ defined by
$$\omega\stackrel{\Gamma}{\to} (\psi(\omega_{\cdot\wedge t}+ \cdot ),H(\cdot, \omega_{\cdot\wedge t}+ \cdot, \cdot)).$$
is $\Gfilt_t$-measurable.

After a suitable translation $J(t,x)$ can be written as
\begin{align*}
J(t,x)&=\sup_{\Pset^0_t(\delta_{t,x})}-\E^\P \psi-\EP \int_t^1 H(s,\cdot,v^\P_s) \,ds\\
&=\sup_{\Pset^0_t(\delta_{t,0})}-\E^\P \psi(x+\cdot)-\EP \int_t^1 H(s,x+\cdot,v^\P_s) \,ds.
\end{align*}
Hence $J(t,x)$ can be written as the composition
\[
x \stackrel{\Gamma}{\to} (\psi, H) \to \sup_{\Pset^0_t(\delta_{t,0})}-\E^\P \psi-\EP \int_t^1 H(\cdot,v^\P_s) \,ds.
\]
We have already established that $\Gamma$ is $\Gfilt$-measurable. The second map is continuous, since both $\E^\P$ and $\E^\P\int_t^1 \cdot \, ds$ are Lipschitz continuous functionals. Therefore $J(t,\cdot)$ is $\Gfilt_t$-measurable.

(ii) First, the $\Gfilt_t$-measurability of $\phi$ follows immediately from Proposition \ref{thmppdesoln01}. 
By the functional It\^o formula, for $u> t$
\begin{gather}\label{eqderivmeasito}
\phi(u,X)-\phi(t,x)=\int_t^u \Dt \phi\, ds+ \Dx \phi \cdot dX_s + \frac{1}{2} \Dxx \phi : d\langle X\rangle_s, \quad \P\text{-\as}
\end{gather}
holds for all $\P\in\Pset^1_t$. Consider the probability $\P\in\Pset^1_t(\delta_{t,x})$ whose characteristics $(g,h)$ is constant on $[t,1]$. Then the infinitesimal generator of $\phi(t,X)$ is given by
\begin{gather}\label{eqderivmeas}
\lim_{u \searrow t} \frac{\E^\P(\phi(u,X))-\phi(t,x)}{u-t}=\Dt \phi(t,x) + \Dx \phi(t,x) \cdot g + \frac{1}{2} \Dxx \phi(t,x) : h.
\end{gather}
By Lemma \ref{lemlocalfilt} and Proposition \ref{thmlocalise} (i), the left hand side of \eqref{eqderivmeas}, as a function of $x$, is $\Gfilt_t$-measurable. Since \eqref{eqderivmeas} holds for all $(g,h)\in \R^d\times \S^d_+$, $(\Dt\phi(t,x), \Dx \phi(t,x),  \Dxx \phi(t,x))$ must also be $\Gfilt_t$-measurable. Finally, $H$ being strictly convex on $U$ implies that $\nabla H^*$ is continuous. Thus $(\alpha_t,\beta_t)= \nabla H^*\left(\Dx\phi_t(t,\cdot), \frac{1}{2}\Dxx\phi_t(t,\cdot)\right)$ is also $\Gfilt_t$-measurable.
\end{proof}

\begin{remark}\label{remminimalsemifiltration}
In fact, there exists a ``minimal'' semifiltration $\tilde\GG$ to which $(\psi,H)$ is adapted. For each $t$, we can define a $\sigma$-algebra $\tilde \Gfilt_t=\sigma(\Gamma)$, where $\Gamma_t$ is the map defined by,
$$\omega\stackrel{\Gamma_t}{\to} (\psi(\omega_{\cdot\wedge t}+ \cdot ),H(\cdot, \omega_{\cdot\wedge t}+ \cdot, \cdot)).$$
From the proof of Proposition \ref{thmlocalise}, we have seen that $\tilde\Gfilt_t\subseteq\Gfilt_t$ for every semifiltration $\GG=\{\Gfilt_t,t\in[0,1]\}$ to which $(\psi,H)$ is adapted. So it suffices to show that $\tilde\GG=\{\tilde\Gfilt_t,t\in[0,1]\}$ is indeed a semifiltration. This reduces to checking that $\Gamma_u$ is $\tilde\Gfilt_t\vee\Filt_{[t,u]}$-measurable for $t<u$, which follows from the continuity of the map $(\Gamma_t, \omega_{[t,u]})\to \Gamma_u$.
\end{remark}

\section{Volatility calibration}

The  results of this paper can be applied to the problem of calibrating a volatility model to complex path dependent derivatives. Without loss of generality, let us assume that the interest rate is 0.  Suppose that some derivative prices are known, the goal is to find a martingale diffusion for the underlying asset which attains those prices. As far as the authors are aware of, exact calibration techniques are only available on vanilla products (European options). The path dependent nature of the results presented allows us to include path dependent products such as Asian options, barrier options and lookback options. 

Let the canonical process $X$ be the logarithm of the underlying stock price. Note that the logarithm transform is purely chosen for notational and numerical convenience, and is not at all necessary. We are interested in finding a probability measure $\P\in\Pset^0$ with characteristics $(-\frac{1}{2}\sigma^2, \sigma^2)$ where $\sigma$ is some $\FF$-adapted process. In other words, we want $X$ to be an $(\FF,\P)$-semimartingale in the form of
\[
dX_t=-\frac{1}{2}\sigma^2 dt + \sigma dW^\P_t.
\]
Next let $G : \Omega \to R^m$ denote a vector of $m$ (path dependent) discounted option payoff functions. We further restrict $\P$ so that the options have known prices $\E^\P G=c$ for some $c\in\R^m$. It is immediate that this problem is a special case of the general problem we introduced in Section \ref{sec2}, specifically in Example \ref{examplea03}. In particular, we want to solve
\begin{gather*}
\inf_{\P\in\Pset^0_t(\delta_{X_0})} \EP \int_0^1 H(\alpha^\P_s,\beta^\P_s) \,ds,\quad
\text{subject to}\quad \E^\P G=c,
\end{gather*}
where $H$ is any suitable convex cost function whose effective domain is within the set $\{-2\alpha=\beta\}$. In the examples of this Section, we consider a cost function of the form
\[
H(\alpha,\beta)=
\begin{cases}
a(\beta/ \bar\sigma^2)^p+b(\beta/ \bar\sigma^2)^{-q}+c, & -2\alpha= \beta,\\
\infty,& -2\alpha\neq \beta,
\end{cases}
\]
where $\bar\sigma$ is some reference volatility level, $p, q$ are constants greater than 1, and $a, b, c$ are constants chosen so that the function reaches its minimum at $\beta=\bar\sigma^2$ with $\min H=0$. The basic idea is to keep $\beta$ positive and penalise any large deviations from $\bar\sigma^2$.\footnote{The results from Sections \ref{seclocalisation} and \ref{secppde} required the cost function $H$ to have a compact effective domain, this can be achieved by truncating $H$ at extreme values of $\beta$ and setting $H$ to infinity outside of these values.}

As mentioned in Example \ref{examplea03}, the corresponding saddle point problem is 
\begin{gather*}
V=\inf_{\P\in\Pset^0_t(\delta_{X_0})}\sup_{\lambda \in\R^m} \lambda\cdot( \EP G-c)+ \EP\int_0^1 H(\alpha^\P_s,\beta^\P_s) \,ds.
\end{gather*}
Applying Theorem \ref{thmsummary01} and assuming sufficient regularity on the payoff functions, the dual formulation of the problem is
\begin{align}\label{eqnumerdual}
V=\VV=\sup_{\lambda \in\R^m} -\lambda\cdot c - \phi(0,X_0),
\end{align}
where $\phi$ is a solution to the PPDE
\begin{align}\label{eqnumerppde}
\phi(1,\cdot)= -\lambda\cdot G \quad \text{and}  \quad \Dt\phi + H^*\left(\Dx\phi, \frac{1}{2}\Dxx\phi\right)= 0.
\end{align}

Numerically, the difficult part is to solve the PPDE \eqref{eqnumerppde} to find $\phi(0,X_0)$. The key idea is to use Theorem \ref{thmsummary01} (iv) to effectively reduce the dimensionality of $\phi$ to a manageable size. For many examples, instead of being dependent on the whole path of $X$, the solution $\phi$ is in fact Markovian with respect to a few state variables. 
The general abstract result is described in Definition \ref{thmlocalise}. But for most practical cases, it is straightforward to identify the relevant state variables by inspection and they are the familiar variable from standard option pricing techniques. Here are some examples of relevant state variables:
\begin{itemize}
\item European options: the spot price $X_t$;
\item Asian options: the spot price $X_t$ and the running average $\frac{1}{t}\int_0^t X_t\, dt$;
\item Continuous barrier options: the spot price $X_t$ and the indicator variable $\I(X_s>B, s\in[0,t])$ for lower barriers or $\I(X_s<B, s\in[0,t])$ for upper barriers;
\item Lookback options: the spot price $X_t$ and either the running minimum $\min_{s\in[0,t]} X_s$ or running maximum $\max_{s\in[0,t]} X_s$.
\end{itemize}
In each case, the relevant semifiltration at time $t$ is the $\sigma$-algebra generated by each set of state variables at time $t$.
Then the PPDE reduces to a PDE which depends on the spot price as well as an additional path dependent variable and can be solved via conventional methods.

The supremum in \eqref{eqnumerdual} over $\lambda\in\R^m$ is computed by a standard optimisation routine. This process can be further aided by numerically computing the gradient of the objective with respect to $\lambda$ in the following way. From \eqref{eqnumerppde}, $\phi$ is also a function a $\lambda$ via the terminal condition. Writing $\phi'=\partial_\lambda \phi$ and differentiating the PPDE with respect to $\lambda$, we obtain
\begin{align}\label{eqnumerppde20}
\Dt\phi' + \alpha\Dx\phi' + \frac{1}{2}\beta\Dxx\phi' = 0.
\end{align}
where $(\alpha,\beta)=\nabla H^*\left(\Dx\phi, \frac{1}{2}\Dxx\phi\right)$.
Hence the gradient of the objective function is given by $\nabla_\lambda (-\lambda\cdot c - \phi(0,X_0)) = - c-\phi'(0,X_0)$ where $\phi'$ satisfies $\phi'(1,\cdot)=-G$ and \eqref{eqnumerppde20}. 
Once the optimal $\lambda$ and $\phi$ have been found, the optimal volatility is given by $\sigma^2=\beta=-2\alpha$ where $(\alpha,\beta)=\nabla H^*\left(\Dx\phi, \frac{1}{2}\Dxx\phi\right)$.

\begin{remark}
(i) The gradient of the objective function with respect to $\lambda$ has a natural financial interpretation. Since $\phi'(0,X_0) = \EP(-G)$ where $\P$ has characteristics $(\alpha,\beta)$, the gradient is in fact $\EP G - c$, or the difference between the option prices given by the current optimisation iteration and the target option prices. The optimum is reached when that difference is zero, in other words, the target option prices are attained exactly. 

(ii)
Recall that in our original formulation, $G$ is required to be a bounded and uniformly continuous function. In practice, many options do not have bounded payoffs (e.g., call options). This can be fixed by either converting them into options with bounded payoffs via arbitrage arguments (e.g., put options via put-call parity), or by truncating the domain at some extremely large value. Option that do not have continuous payoffs (e.g., digital options, barrier options) can be approximated by uniformly continuous functions.

(iii)
By increasing the dimension of the canonical process to include other features such as the variance process, the same technique can be used to calibrate local stochastic volatility (LSV) models. See \cite{guo2019calibration} for more details.

(iv) Based on the result of this paper, similar methods have been developed for the calibration of LSV models \cite{guo2019calibration} and the joint calibration of stock and VIX options \cite{guo2020joint}.

(v) Our results here are limited to non-callable products, hence excluding the calibration of Bermudan and American options. Callable products involve incorporating stopping times into the duality spaces, which requires additional techniques beyond the scope of the current work. This problem is under current research and will be addressed in a forthcoming paper.
\end{remark}

In the following subsections, we will demonstrate a few numerical examples that involve calibrating volatility functions to European, Barrier and Lookback options. Since each of the payoff functions considered satisfies Assumption \ref{asscostfuncioncc} (see also, \cite{ekren2016viscosity2} Lemma 3.6), the corresponding PPDE \eqref{eqnumerppde} has a unique viscosity solution. For the the convergence of the numerical schemes for viscosity solutions to PPDEs, we refer to \cite{ren2017convergence, zhang2014monotone}.

\subsection{European options}
European options have payoff functions of the form $G(X_T)$ where each option depends on the value of the underlying at a fixed maturity $T$.
In this case, the function $\phi$ and the optimal volatility $\sigma$ only depend on the state variable $t$ and $X_t$. In other words, we recover a local volatility model. This is consistent with classical approaches such as Dupire's formula \cite{dupire1994pricing}. In some sense, local volatility models are the ``simplest'' models that can calibrate to all European products. Unlike Dupire's formula, our approach does not require the interpolation of option prices between discrete strikes and maturities.
The functional derivatives in the PPDE simply reduces to the usual partial derivatives,
\begin{align}\label{eqeurpde}
\partial_t\phi + H^*\left(\partial_x\phi, \frac{1}{2}\partial^2_x\phi\right)= 0.
\end{align}
Figure \ref{figeur} shows an example of a volatility calibrated to European options at all strikes and four different maturities.
\begin{figure}[!h]
\includegraphics[width=\textwidth,trim={3cm 2cm 3cm 2cm},clip]{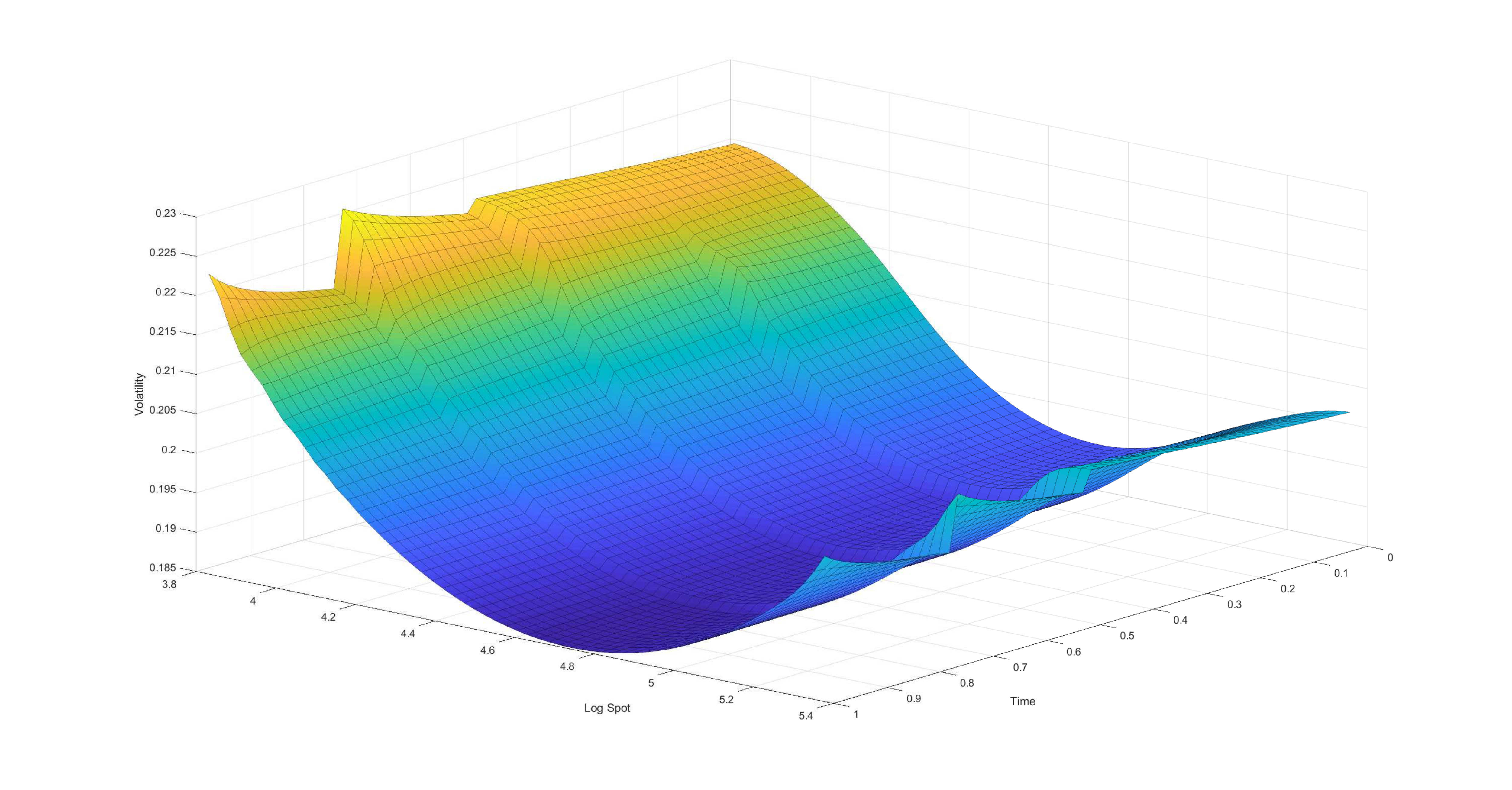}
\caption{Volatility surface calibrated to European put options at all strikes and four different maturities}
\label{figeur}
\end{figure}

\subsection{Barrier options}

\begin{figure}[!ht]
\includegraphics[width=\textwidth,trim={3cm 2cm 3cm 2cm},clip]{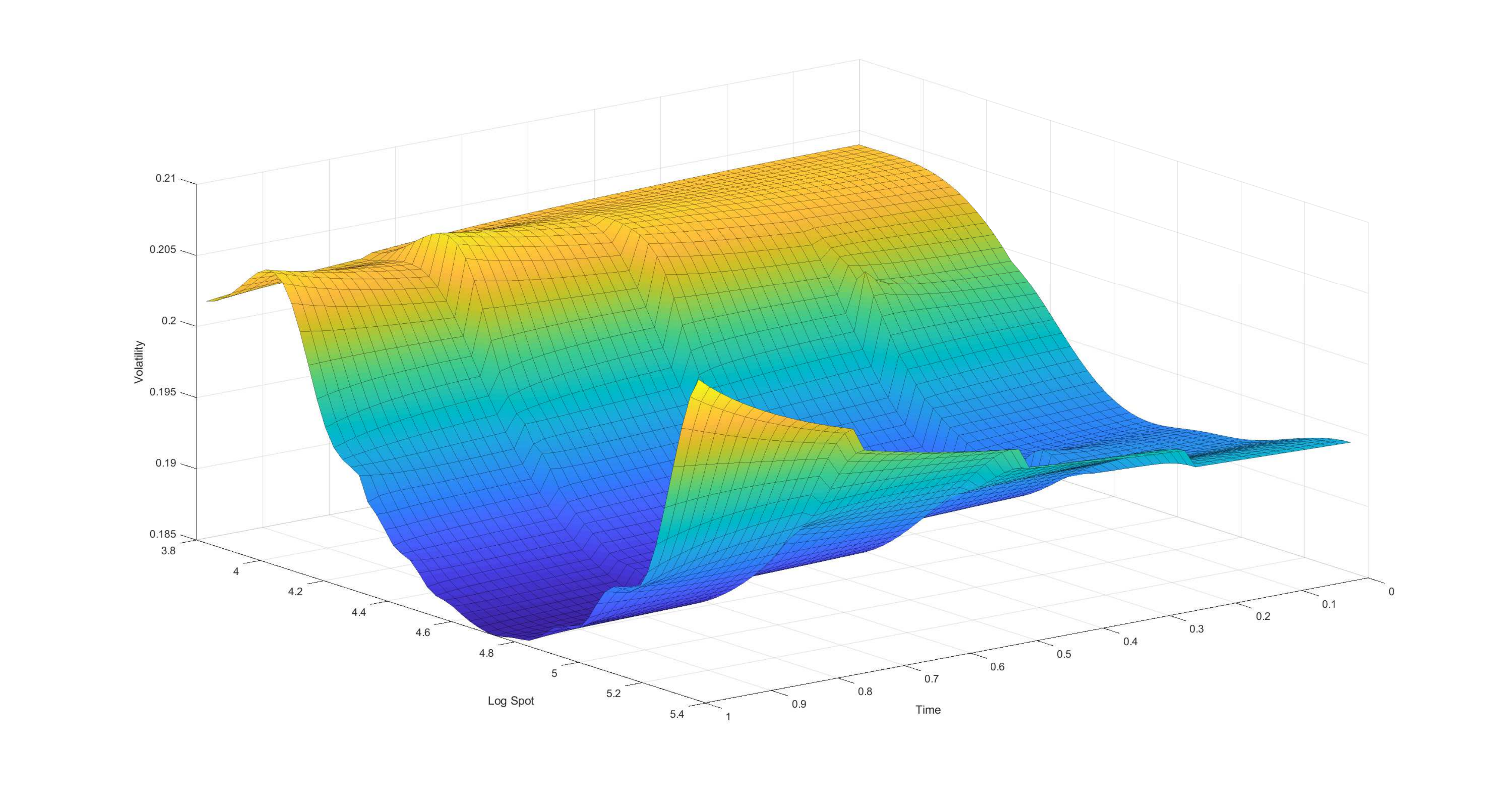}
\includegraphics[width=\textwidth,trim={3cm 2cm 3cm 2cm},clip]{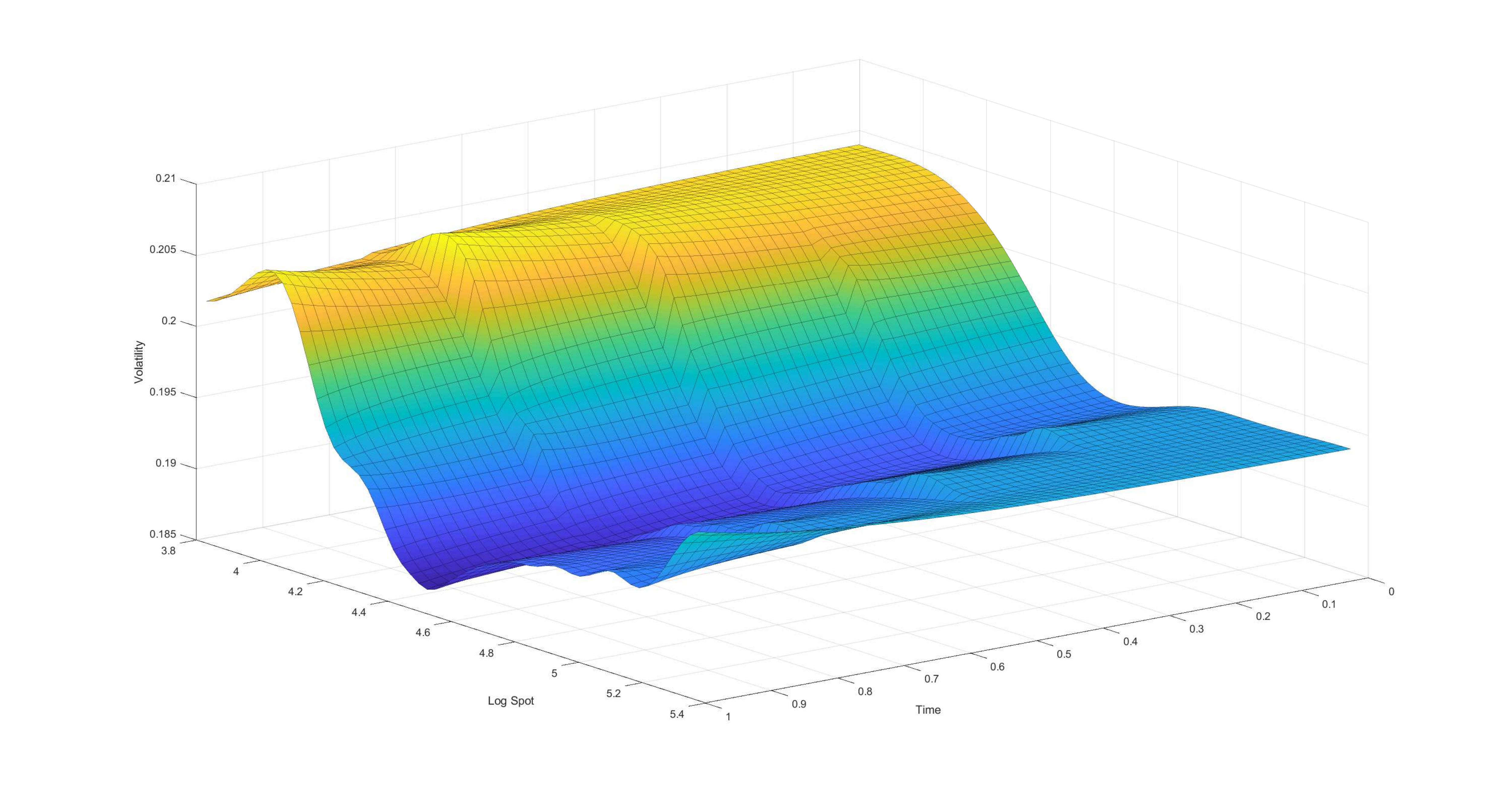}
\caption{Volatility calibrated to all down-and-in and down-and-out puts at all strikes and four different maturities. The top half is showing $\sigma_0$ (before hitting the barrier) and the bottom half is showing $\sigma_1$ (after hitting the barrier).}
\label{figbar}
\end{figure}

Formally speaking, a barrier is a closed subset $\mathbb B \subset [0,1]\times \R$ whose complement is a connected region containing $(0,X_0)$. 
The payoff of a barrier product expiring at time $T$ is a function of $X_T$ and the indicator variable $\I_T:=\I(X_s\in \mathbb B, \text{for some } s\in[0,t])$, checking whether the path of the underlying has hit the barrier. 
When calibrating to a collection of barrier products with a single fixed barrier, the required state variables are $t, X_t$ and $\I_t$. Then the function $\phi$ can be effectively split into two functions, $\phi^0(t,x)$ and $\phi^1(t,x)$, corresponding to the cases $\I_t=0$ and $\I_t=1$, respectively. The PDE is then given by
\begin{align*}
\partial_t\phi_1 + H^*\left(\partial_x\phi_1, \frac{1}{2}\partial^2_x\phi_1\right)= 0,& \quad (t,x)\in [0,1]\times \R,\\
\partial_t\phi_0 + H^*\left(\partial_x\phi_0, \frac{1}{2}\partial^2_x\phi_0\right)= 0,& \quad (t,x)\notin \mathbb B,\\
\phi_0 = \phi_1,& \quad (t,x)\in \partial\mathbb B.
\end{align*}
Similarly, the optimal volatility will be switching between two local volatilities $\sigma_0(t,x)$ and $\sigma_1(t,x)$, conditional to whether the underlying has hit $\mathbb B$ or not. The PDE for $\phi_0$ will be used to compute the volatility function prior to the stock hitting the barrier, while the PDE for $\phi_1$ will be used to compute the volatility function after the barrier has been hit.

If we calibrate to options with $l$ distinct barriers, then a similar approach applies but with $l$ indicator variables. In this case $\phi$ and $\sigma$ will be split into $2^l$ functions, conditioning on the subset of the barriers that has been reached. The number $2^l$ can be reduced in many cases by eliminating combinations of barrier events are not reachable. For example, if the barriers are nested, then only $l+1$ functions are needed.

As an example, let us consider barrier products with respect to a continuous lower barrier $\{x\leq b\}$ where $b<X_0$ is a constant. In particular, we will be calibrating to all down-and-in and down-and-out puts at all strikes and four different maturities. The top half of Figure \ref{figbar} shows the calibrated volatility function $\sigma_0$ (before hitting the barrier) and the bottom half shows $\sigma_1$ (after hitting the barrier). Even though $\sigma_0$ is only defined for $x\geq b$, for the purpose of visualisation, we set $\sigma_0 = \sigma_1$ for $x<b$.

\subsection{Lookback options}
\begin{figure}[!htp]
\includegraphics[width=\textwidth]{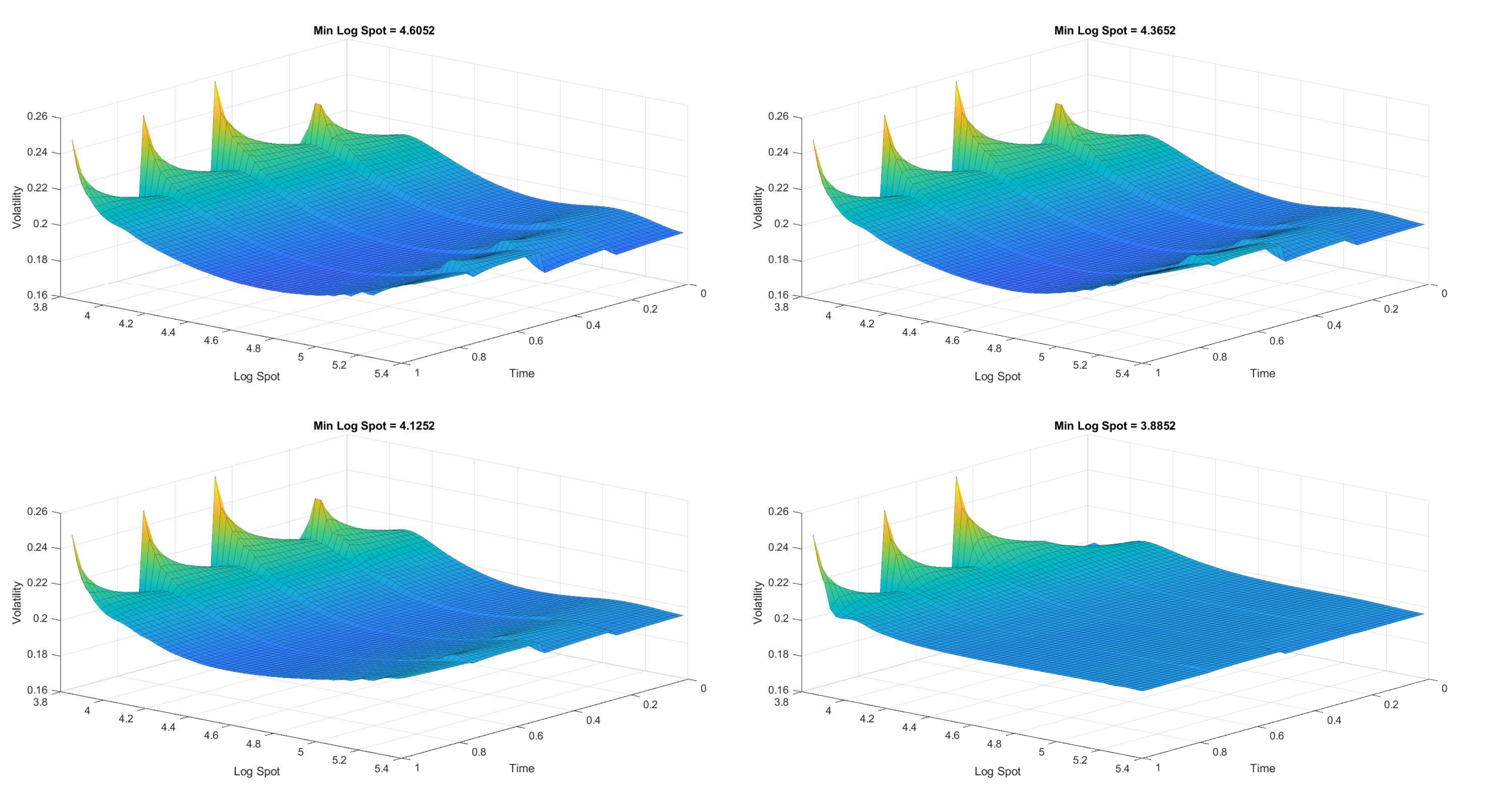}
\includegraphics[width=\textwidth]{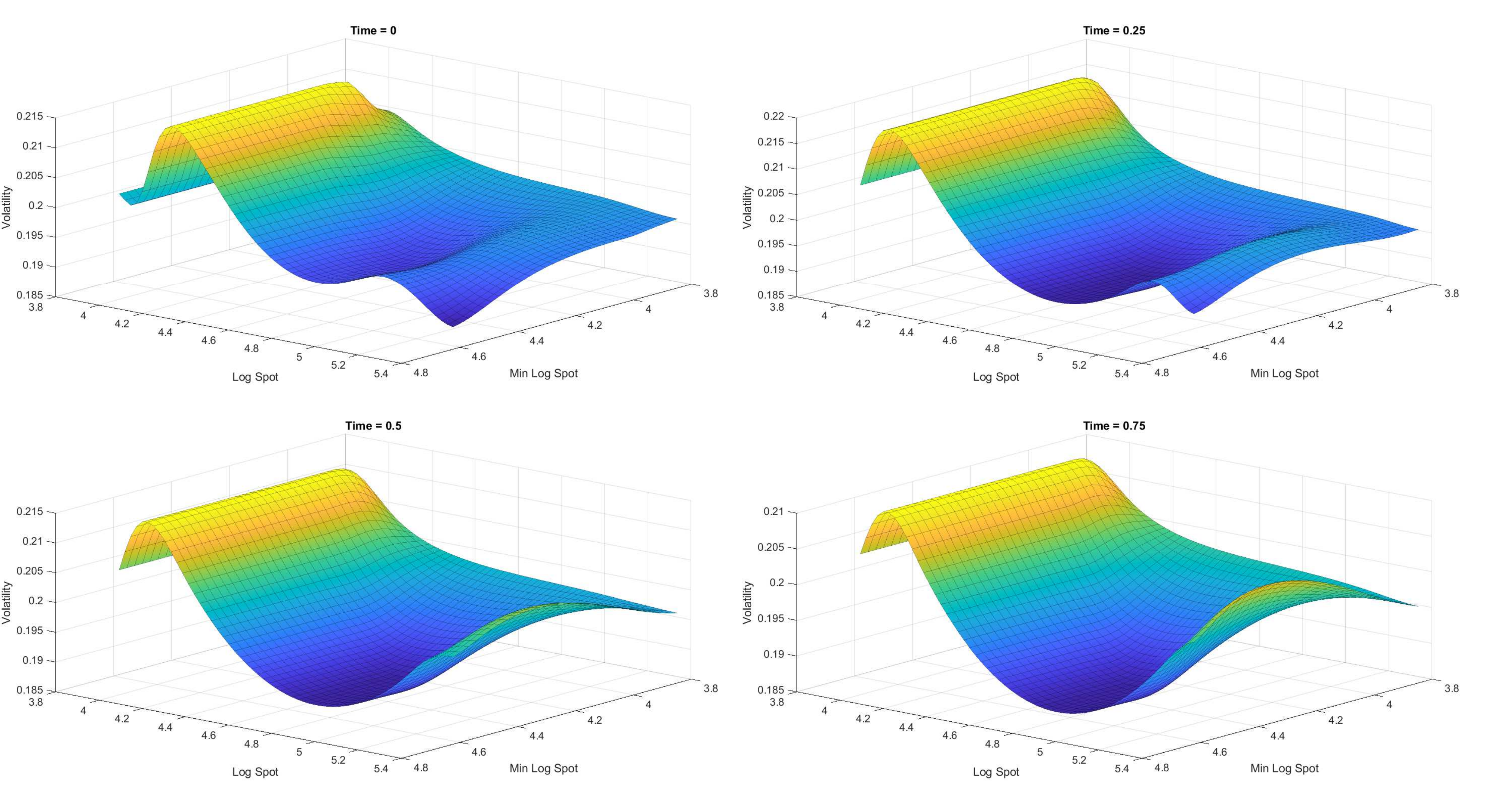}
\caption{Volatility function $\sigma(t,x,y)$ calibrated to European puts, all lower barrier down-and-out puts and fixed strike lookback puts at all strikes and four different maturities. The top half of the figure shows cross sections at different values of $t$ while the bottom half shows cross sections at different values of $y$.}
\label{figlookback}
\end{figure}
The payoff of lookback products expiring at time $T$ depends on $X_T$ as well as either $\max_{s\in[0,T]} X_s$ or $\min_{s\in[0,T]} X_s$. Here we will focus on cases involving the minimum $Y_T:=\min_{s\in[0,T]} X_s$.   For example, the payoff of a fixed strike lookback put with strike $K$ is given by $(K-Y_T)^+$.
In this case, the state variables for $\phi$ and $\sigma$ will be $t, X_t$ and $Y_t$. Note that European options and barrier options with lower barriers are also special cases of lookback products.
The PDE is then given by
\begin{align}\label{eqlookbackpde}
\partial_t\phi + H^*\left(\partial_x\phi, \frac{1}{2}\partial^2_x\phi\right)= 0,& \quad x\geq y,\\
\partial_y\phi=0,& \quad x=y.\label{eqlookbackpde2}
\end{align}
The boundary condition \eqref{eqlookbackpde2} can be obtained in the following way. First, via standard arguments using the dynamic programming principle, and the fact that $Y$ has finite variation, we derive the following equation
\[
\partial_t\phi\, dt + H^*\left(\partial_x\phi, \frac{1}{2}\partial^2_x\phi\right)dt+\partial_y\phi \, dY_t=0.
\]
Then, by using the argument from \cite{privault2013stochastic} Proposition 8.5., the required boundary condition \eqref{eqlookbackpde2} follows from the fact that $dt$ and $dY_t$ are mutually singular.

In Figure \ref{figlookback}, we show the resulting volatility function $\sigma(t,x,y)$ calibrated to European puts, all lower barrier down-and-out puts and fixed strike lookback puts at all strikes and four different maturities. The top half of the figure shows cross sections at different values of $t$ while the bottom half shows cross sections at different values of $y$. Even though $\sigma$ is only defined for $x\geq y$, for the purpose of visualisation, we set $\sigma(t,x,y)=\sigma(t,x,x)$ for $x<y$.

\begin{figure}[!htp]
\includegraphics[width=\textwidth]{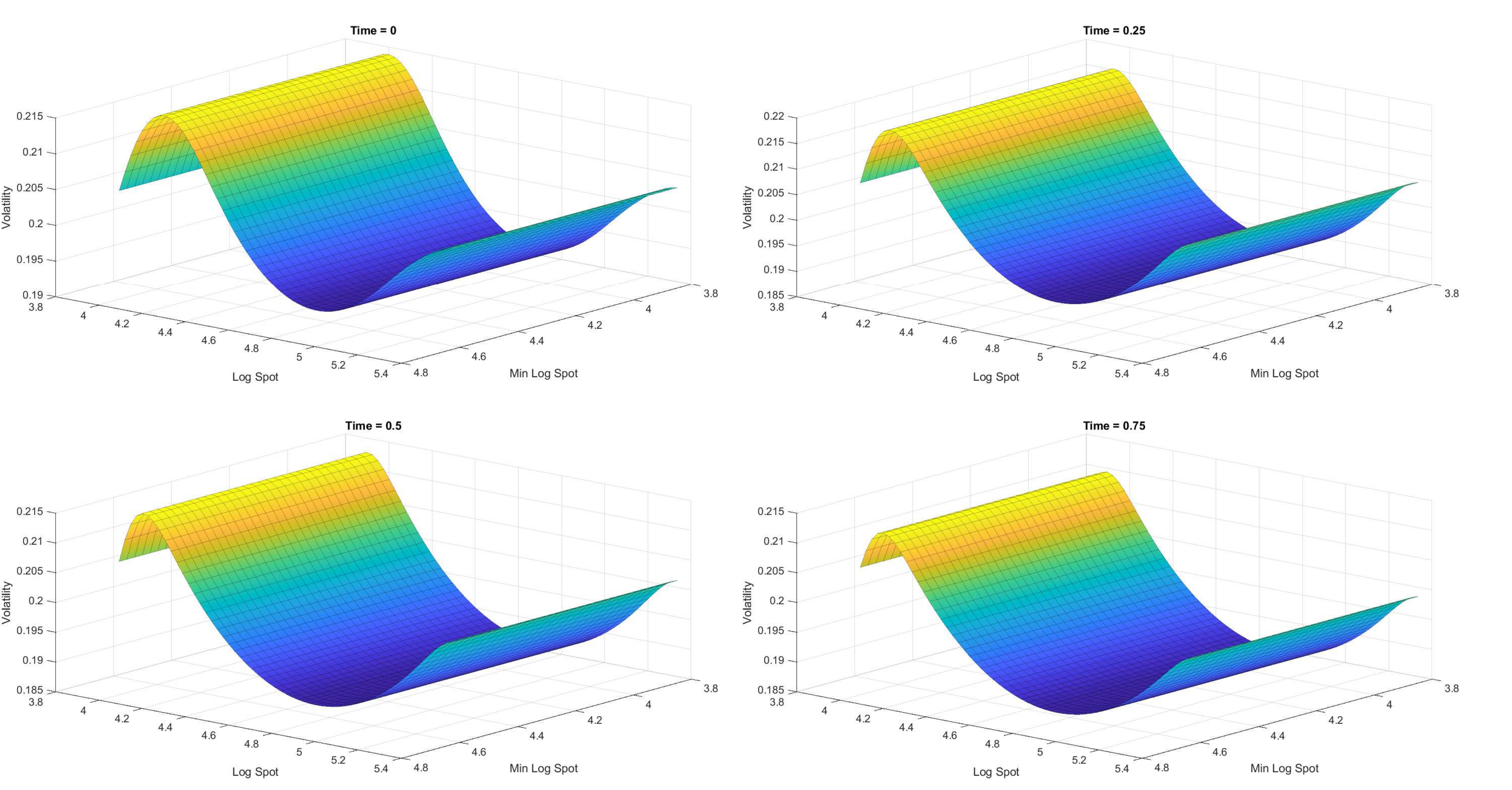}
\includegraphics[width=\textwidth]{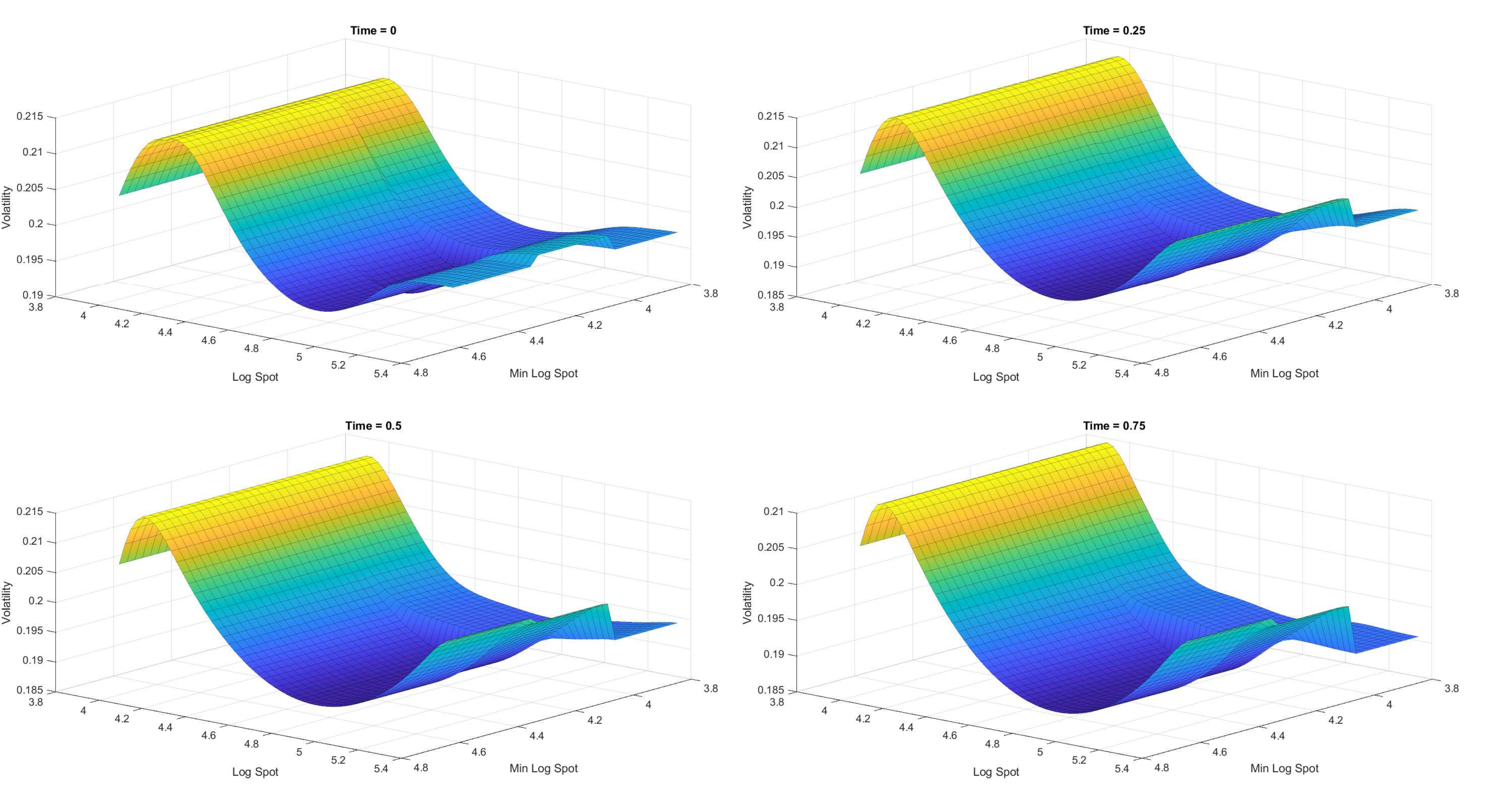}
\caption{The top half shows a volatility function $\sigma(t,x,y)$ calibrated to European options only. The bottom half is calibrated to European options and barrier options at two different barriers.}
\label{figlocalise}
\end{figure}

To further demonstrate the effect of dimension reduction, we repeat the same computation but removing some of the options. In the first test, we only calibrate to European options, while in the second test we calibrate to European options and barrier options at two different barriers $b_1<b_2<X_0$. The results are shown in Figure \ref{figlocalise}. When only European options are used, $\sigma$ only depends on $(t,x)$ but not $y$. In the cases where some barrier options are added, the dependence of $\sigma$ on $y$ can be divided into three regions, $y>b_2$, $b_2\geq y>b_1$ and $b_1\geq y$, corresponding to the number of barriers the underlying has hit. This behaviour is consistent with our dimension reduction results.

\appendix

\section{Appendix}

\subsection{Proof of Lemma \ref{lemmeasureconstraint1}}\label{appendix1}

\begin{proof}[Proof of Lemma \ref{lemmeasureconstraint1}]
The ``if'' direction follows immediately from the functional It\^o formula, so we will focus only on the ``only if'' direction. First note that we can translate $\phi$ by any $\phi(0,\cdot)\in C_b(\Omega_0)$ without altering the right hand side. This yields
\[
\int_{\Omega_0} \phi(0,\cdot) \, d(\mu \circ X^{-1}_0-\rho_0)=0
\]
for all $\phi(0,\cdot)\in C_b(\Omega_0)$. Therefore $\mu$ is a probability measure with $\mu\circ X_0^{-1}=\rho_0$.

Let $f\in C_b(\Lambda)$ be any function. Consider
$
\phi(t,X)=\int_0^t f(s,X)\, ds.
$
Then we have $\Dx \phi = \Dxx \phi = 0$ and $\Dt \phi = f$. Applying Fubini's theorem, we obtain
$
\int_{\Lambda} f \,(d\nu-d\hat\mu)  = 0,
$
Since $f$ is an arbitrary continuous function on $\Lambda$, this implies that $\nu=\hat\mu$ and we can rewrite our condition as
\begin{align}\label{eqidmeasureb01}
\E^\mu (\phi(1,X) - \phi(0,X)) = \E^\mu \int_0^1 \Dt \phi + \alpha_t \cdot \Dx \phi + \frac{1}{2}\beta_t : \Dxx\phi \, dt.
\end{align}

Fix $u\in[0,1]$ and let $I^n\in C^1_b([0,1])$ be a sequence of increasing functions with $I^n(t)=0$ for $t\in[0,u]$ and $I^n(t)=1$ for $t\in[u+1/n,1]$.
Define the function $P:\R^d\to \R$ by $P(x)=\sqrt{1+x\cdot x}\geq |x|_\infty$ and let $P^n\in C^2_b(\R^d)$ be a sequence of positive, bounded functions with uniformly bounded derivatives such that $P^n(x)=P(x)$ for $|x|_\infty\leq n$.
Set 
\[
\phi^n(t,X)=P^n(X_t-X_u)I^n(t).
\]
It is clear that $\phi^n\in C^{1,2}_0 (\Lambda)$. 
Then the integral of $\Dt\phi^n$ can be bounded by 
\begin{align*}
\left| \int_0^1 \Dt\phi^n\, dt\right| &= \left|\int_u^{u+\frac{1}{n}} P^n(X_t-X_{u})\partial_t I^n(t)\, dt \right|\\
&\leq \left(\max_{t\in[u,u+\frac{1}{n}]}| P^n(X_t-X_u)|\right) \int_u^{u+\frac{1}{n}}\partial_t I^n(t)\, dt\\
&= \max_{t\in[u,u+\frac{1}{n}]}|P^n(X_t-X_u)|,
\end{align*}
which converges to 1 as $n\to+\infty$.
The space derivatives of $\phi^n$ are uniformly bounded.
Using Fatou's lemma and the integrability of $(\alpha,\beta)$, we have
\[
 \E^\mu |X_1-X_u|_\infty\leq \E^\mu P(X_1-X_u)\leq \lim_{n\to+\infty}\E^\mu P^n(X_1-X_0) < +\infty,
\]
hence $X_1-X_u$ is $\mu$-integrable for all $u\in[0,1]$.

Next, fix $u\in[0,1]$ and let $K^n\in C^2_b(\R^d;\R^d)$ be a sequence of bounded functions with uniformly bounded derivatives satisfying 
$K^n(x)=x$ for $|x|_\infty\leq n$.
Let $g\in C_b(\Omega_{u};\R^d)$ be an arbitrary function and consider
\begin{align*}
\phi^n(t,X)&=K^n(X_t-X_{u})\cdot g(X_{\cdot\wedge u})I^n(t). 
\end{align*}
Using arguments similar to before, $\phi^n\in C^{1,2}_0 (\Lambda)$. Furthermore, the space derivatives of $\phi^n$ are uniformly bounded and
\begin{gather*}
\lim_{n\to+\infty}\int_0^1 \Dt\phi^n\, dt=0,\ \ \lim_{n\to+\infty}\Dx\phi^n=g(X_{\cdot\wedge u})\I(t>u),\ \  \lim_{n\to+\infty}\Dxx\phi^n=0.
\end{gather*}
By the integrability of $\alpha$ and $X_1-X_u$ as well as the dominated convergence theorem, as $n\to+\infty$,
\[
\E^\mu ((X_1-X_{u})\cdot g(X_{\cdot\wedge u})) = \E^\mu \bigg(\int_u^1 \alpha_t \, dt \cdot g(X_{\cdot\wedge u})\bigg).
\]
Recall that $g$ is arbitrary, which implies
\[
\E^\mu\bigg(X_1-X_{u}-\int_u^1 \alpha_t \, dt\,\bigg|\,\Filt_u\bigg)=0.
\]
Since this holds for all $u\in[0,1]$, and $\alpha, X_1-X_u$ are integrable, $M:=X-\int_0^\cdot \alpha_t \, dt$ must be a continuous $(\FF,\mu)$-martingale.

Applying the functional Ito's formula, our condition reduces to
\[
\E^\mu \bigg(\int_0^1 \Dxx\phi: (\beta\, dt-d\langle X\rangle_t)\bigg)  = 0.
\]
Fix $u\in[0,1]$, let $h\in C_b(\Omega_{u};\S^d)$ and consider
\[
\phi^n(t,X)=K^n(X_t-X_{u})^\intercal h(X_{\cdot\wedge u})K^n(X_t-X_{u}) I^n(t), 
\]
where $K^n$ and $I^n$ are defined as before. Once again, $\phi^n\in C^{1,2}_0 (\Lambda)$.
In particular, $\Dxx \phi^n$ takes value in $\S^d_+$, is uniformly bounded and satisfies
\[
\lim_{n\to+\infty}\Dxx \phi^n(t,X)= h(X_{\cdot\wedge u})\I(t>u).
\]
By the integrability of $\beta$, Fatou's lemma and the dominated convergence theorem, we have
\begin{gather*}
\E^\mu \int_u^1 h(X_{\cdot\wedge u}): d\langle X\rangle_t \leq \lim_{n\to+\infty} \E^\mu \int_0^1 \Dxx \phi^n: d\langle X\rangle_t\\
=\lim_{n\to+\infty} \E^\mu \int_0^1 \Dxx \phi^n: \beta\, dt=\E^\mu \int_u^1 h(X_{\cdot\wedge u}): \beta\, dt<+\infty,
\end{gather*}
Thus $\E^\mu \int_u^1 h(X_{\cdot\wedge u}): d\langle X\rangle_t <+\infty$ and we may apply the dominated convergence theorem again to change the first inequality to an equality, which yields
\[
\E^\mu \bigg(\int_u^1 h(X_{\cdot\wedge u}): (\beta\, dt-d\langle X\rangle_t) \bigg) = 0,
\]
for all $h\in C_0(\Omega_{u};\S^d)$.
Since $M=X-\int_0^\cdot \alpha_t \, dt$ is a martingale with $\langle M\rangle=\langle X\rangle$, we must have
\[
\E^\mu ((M_1-M_{u}) (M_1-M_{u})^\intercal \,|\, \Filt_u)=\E^\mu \bigg(\int_u^1 d\langle X\rangle_t \,\bigg|\, \Filt_u\bigg)=\E^\mu \bigg(\int_u^1 \beta_t \, dt \,\bigg|\, \Filt_u\bigg),
\]
and so $MM^\intercal-\int_0^\cdot \beta_t \, dt$ is an $(\FF,\mu)$-martingale. Therefore $\mu$ must be a semimartingale measure with characteristics $(\alpha,\beta)$, completing the proof.
\end{proof}

\subsection{Lemma \ref{lemcostfunciton01}}\label{appendix2}

\begin{lemma}\label{lemcostfunciton01}
Define $a:C_b(\Omega)\times C_b({\Omega_0})\times C_b({\Omega})\times C_b(\Lambda;\R\times\R^d\times \S^d)\to\R$ by
\begin{align*}
a(\psi,\bar\varphi,\varphi,p,q,r) &:= \begin{cases} \displaystyle
\int_{\Omega_0} \bar\varphi\, d\rho_0, & \text{if $\varphi\leq \epsilon$ and $p+H^*(q, r)\leq \epsilon$,} \\
+\infty, & \text{otherwise.}
\end{cases}
\end{align*}
Its convex conjugate $a^*: \Mset({\Omega})\times \Mset({\Omega_0})\times\Mset({\Omega})\times\Mset(\Lambda;\R\times\R^d\times \S^d)\to\R$ is given by
\begin{align*}
a^*(\xi,\rho,\mu,\nu,\bar\nu,\tilde\nu)&:= \begin{cases} \displaystyle
\int_{\Omega} \epsilon \,d\mu+\int_\Lambda \left(H\left(\frac{d\bar\nu}{d\nu},\frac{d\tilde\nu}{d\nu}\right)+\epsilon\right) d\nu, & \text{if $(\xi,\rho,\mu,\nu,\bar\nu,\tilde\nu)\in\AA$}, \\
+\infty, & \text{otherwise.}
\end{cases}
\end{align*}
where
\begin{align*}
\AA&:=\{(\xi,\rho,\mu,\nu,\bar\nu,\tilde\nu)\in \Mset({\Omega})\times \Mset({\Omega_0})\times\Mset({\Omega})\times\Mset(\Lambda;\R\times\R^d\times \S^d):\\
&\qquad\qquad \xi=0,\ \rho=\rho_0,\ \mu\geq 0,\ \nu \geq 0,\ (\bar\nu,\tilde\nu)\ll\nu\}.
\end{align*}
\end{lemma}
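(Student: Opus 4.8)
The plan is to compute the conjugate $a^*$ directly from its definition together with the explicit bilinear pairing $\langle\cdot,\cdot\rangle$. The structural point is that $a$ is \emph{separable}: it does not involve $\psi$ at all, involves $\bar\varphi$ only through the linear term $\int_{\Omega_0}\bar\varphi\,d\rho_0$, and is otherwise the sum of the convex indicators of $\{\varphi\le\epsilon\}$ and $\{p+H^*(q,r)\le\epsilon\}$. Hence $a^*$ decomposes into a sum of four one-block conjugates, and I would evaluate each block in turn.

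Three of the four blocks are routine. The $\psi$-block is $\sup_{\psi\in C_b(\Omega)}\int_\Omega\psi\,d\xi$, equal to $0$ if $\xi=0$ and $+\infty$ otherwise; the $\bar\varphi$-block is $\sup_{\bar\varphi\in C_b(\Omega_0)}\int_{\Omega_0}\bar\varphi\,d(\rho-\rho_0)$, equal to $0$ if $\rho=\rho_0$ and $+\infty$ otherwise; the $\varphi$-block is $\sup_{\varphi\le\epsilon}\int_\Omega\varphi\,d\mu$, which equals $\int_\Omega\epsilon\,d\mu$ when $\mu\ge0$ (attained at $\varphi\equiv\epsilon$) and $+\infty$ otherwise — in the non-positive case, inner/outer regularity of the finite Borel measure $|\mu|$ on the Polish space $\Omega$ and Urysohn's lemma produce $g\in C_b(\Omega)$ with $0\le g\le1$ and $\int g\,d\mu<0$, so that $\varphi=\epsilon-ng$ drives the supremum to $+\infty$. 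This reduces the lemma to the fourth block,
\[
a_3^*(\nu,\bar\nu,\tilde\nu):=\sup_{\substack{(p,q,r)\in C_b(\Lambda;\R\times\R^d\times\S^d)\\ p+H^*(q,r)\le\epsilon}}\Big(\int_\Lambda p\,d\nu+\int_\Lambda q\cdot d\bar\nu+\int_\Lambda r:d\tilde\nu\Big),
\]
which should equal $\int_\Lambda\big(H(d\bar\nu/d\nu,d\tilde\nu/d\nu)+\epsilon\big)\,d\nu$ when $\nu\ge0$ and $(\bar\nu,\tilde\nu)\ll\nu$, and $+\infty$ otherwise.

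For this block I would first record that Assumption~\ref{asscostfunction01}(iii) makes $H^*(t,\omega,\cdot)$ finite on all of $\R^d\times\S^d$, hence continuous in $(q,r)$, and in fact yields a bound $H^*(t,\omega,q,r)\le L(|q|+|r|)$ with $L$ independent of $(t,\omega)$; in particular the constraint set is nonempty. The inequality ``$\le$'' is Fenchel--Young: if $\nu\ge0$ and $(\bar\nu,\tilde\nu)\ll\nu$, put $(\alpha,\beta):=(d\bar\nu/d\nu,d\tilde\nu/d\nu)\in L^1(\Lambda,\nu)$; then $p\le\epsilon-H^*(q,r)$ and $q\cdot\alpha+r:\beta-H^*(q,r)\le H(\alpha,\beta)$ pointwise, so $\int(p\,d\nu+q\cdot d\bar\nu+r:d\tilde\nu)=\int(p+q\cdot\alpha+r:\beta)\,d\nu\le\int(\epsilon+H(\alpha,\beta))\,d\nu$. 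For the degenerate cases: if $\nu\not\ge0$, repeat the Urysohn construction with $q=r=0$ and $p$ a slack constant minus a bump near the negative part of $\nu$; if $\bar\nu\not\ll\nu$ (or $\tilde\nu\not\ll\nu$), choose a $\nu$-null Borel set carrying nonzero mass of some coordinate of $\bar\nu$, shrink a neighbourhood of it until its $\nu$-measure is negligible, and let $q$ spike there while $p$ stays bounded in the spike height via the uniform bound on $H^*$; and if $(\alpha,\beta)\notin\operatorname{dom}H$ on a set of positive $\nu$-measure — equivalently $\int H(\alpha,\beta)\,d\nu=+\infty$ — a parallel argument using Assumption~\ref{asscostfunction01}(ii) again forces $a_3^*=+\infty$.

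The remaining inequality ``$\ge$'', when $\nu\ge0$, $(\bar\nu,\tilde\nu)\ll\nu$ and $\int H(\alpha,\beta)\,d\nu<+\infty$, is the heart of the matter: for each $\eta>0$ one must exhibit a \emph{continuous} triple $(p,q,r)$ with $p+H^*(q,r)\le\epsilon$ holding \emph{everywhere} and pairing at least $\int(\epsilon+H(\alpha,\beta))\,d\nu-\eta$. The natural candidate is a measurable choice $(q_0,r_0)(t,\omega)$ bringing $q_0\cdot\alpha+r_0:\beta-H^*(q_0,r_0)$ within $\eta$ of $H(\alpha,\beta)$, together with $p_0:=\epsilon-H^*(\cdot,q_0,r_0)$; these are only measurable, and $H^*$ need not be continuous in $(t,\omega)$. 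I would regularise by truncating to $\{|\alpha|+|\beta|+|q_0|+|r_0|+|H(\alpha,\beta)|\le M\}$ (of $\nu$-measure $>\nu(\Lambda)-\eta$ for large $M$, by coercivity), applying Lusin's theorem to get a closed set $C$ with $|\nu|(\Lambda\setminus C)<\eta$ on which all these functions are continuous, extending $q_0,r_0$ by Tietze with preserved sup-bounds, multiplying by a continuous cutoff equal to $1$ on $C$ supported in a neighbourhood $W$ with $|\nu|(W\setminus C)<\eta$, and taking $p$ continuous, equal to $p_0$ on $C$ and bounded above by $\epsilon-L(|q|+|r|)$ off $C$, so that $p+H^*(q,r)\le\epsilon$ throughout. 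The pairing then differs from $\int(\epsilon+H(\alpha,\beta))\,d\nu$ by at most a multiple of $\eta$, and summing the four blocks gives the stated formula. The main obstacle is exactly this patching — keeping the approximation continuous while respecting the \emph{pointwise} constraint $p+H^*(q,r)\le\epsilon$ even though $H^*$ may be discontinuous in $(t,\omega)$ — which is what the coercivity-induced uniform bound on $H^*$ is there to handle; alternatively one could route this step through Rockafellar's duality for integral functionals of normal integrands.
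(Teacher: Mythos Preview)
Your approach---decompose $a^*$ into four separable blocks, dispatch the $\psi,\bar\varphi,\varphi$ blocks by elementary arguments, and reduce everything to the $(p,q,r)$ block---is exactly the paper's. The paper also derives the necessary conditions $\nu\ge0$ and $(\bar\nu,\tilde\nu)\ll\nu$ by the same bump-function constructions, and gets the upper bound by pointwise Fenchel--Young.

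The only substantive difference is in the attainment step for the $(p,q,r)$ block. The paper first eliminates $p$: since $\nu\ge0$, the supremum is approached as $p\uparrow\epsilon-H^*(q,r)$, which the paper substitutes directly to reduce to $\sup_{(q,r)\in C_b}\int(q\cdot\alpha+r:\beta-H^*(q,r))\,d\nu$; it then chooses continuous $(q_n,r_n)\to\nabla H(\alpha,\beta)$ via density of $C_b$ in $L^1$ and invokes dominated convergence together with continuity of $H^*$ on its domain. Your Lusin/Tietze patching is more elaborate but more honest about the obstacle you correctly flag: without continuity of $H^*$ in $(t,\omega)$, the candidate $p=\epsilon-H^*(q,r)$ need not lie in $C_b(\Lambda)$, and the paper's one-line substitution glosses over exactly this. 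Your construction---or the Rockafellar integral-functional route you mention---is a sound way to close that gap.

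One small correction: coercivity of order $p>1$ yields $H^*(t,\omega,q,r)\le C'(1+|q|^{p'}+|r|^{p'})$ with $p'=p/(p-1)$, not the linear bound $L(|q|+|r|)$ you claim. This is harmless for your argument, which only needs $H^*$ uniformly bounded on bounded $(q,r)$, and the polynomial bound delivers that just as well.
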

\begin{proof}
Throughout the proof, we will use the fact that $C_b$ is dense in $L^1$ with respect to the $L^1$ topology.
Let us identify the cases where $a^*<+\infty$.
Using the definition of convex conjugates, we have
\begin{align*}
a^*(\xi,\rho,\mu,\nu,\bar\nu,\tilde\nu)&=  \sup_{\substack{(\psi,\bar\varphi,\varphi,p,q,r)\\ \varphi\leq \epsilon, p+H^*(q, r)\leq \epsilon}} \int_{\Omega} (\psi\, d\xi+\varphi\, d\mu) + \int_{\Omega_0} \bar\varphi\, d(\rho-\rho_0)\\
&\qquad\qquad\qquad\qquad+\int_\Lambda (p\,d\nu +  q\cdot d\bar\nu +  r : d\tilde\nu).
\end{align*}
If $a^*<+\infty$, then $\xi=0$, $\rho=\rho_0$, $\mu\geq 0$ and $\nu \geq 0$. To see why one can restrict to $\mu\geq 0$, suppose $\mu(E)<0$ for some measurable set $E\subset\Omega$. Then there exists a sequence of non-positive functions $\varphi_n\in C_b(\Omega)$ that converge to $-\I(E)$ in $L^1(\Omega,\mu)$. By scaling $\varphi_n$ arbitrarily and adding them to $\varphi$, the function $a^*$ becomes unbounded. A similar argument shows that $\nu \geq 0$. So our function reduces to
\begin{align*}
a^*(\xi,\rho,\mu,\nu,\bar\nu,\tilde\nu)&=\int_\Omega \epsilon\, d\mu+ \sup_{p+H^*(q, r)\leq \epsilon} \int_\Lambda p\,d\nu +  q\cdot d\bar\nu +  r : d\tilde\nu.
\end{align*}

Next, since the function is linear in $(p,q,r)$, if $a^*$ is finite, then the supremum must occur at the boundary
\begin{align*}
a^*(\xi,\rho,\mu,\nu,\bar\nu,\tilde\nu)&= \int_\Omega \epsilon\, d\mu+ \sup_{p+H^*(q, r)= \epsilon} \int_\Lambda p\,d\nu +  q\cdot d\bar\nu +  r : d\tilde\nu\\
&= \int_\Omega \epsilon\, d\mu+\int_\Lambda \epsilon\, d\nu+ \sup_{\substack{(q, r)}} \int_\Lambda  q\cdot d\bar\nu +  r : d\tilde\nu-H^*(q, r)\,d\nu.
\end{align*}
We claim that it is necessary to have $(\bar\nu,\tilde\nu)\ll\nu$. Suppose that there exists a measurable set $E$ such that $(\bar\nu,\tilde\nu)(E)\neq 0$ but $\nu(E)=0$. Once again let us construct a sequence of continuous function in $C_b(\Lambda)$ converging to $\I(E)$ in $L^1(\Lambda)$ and add multiples of it (depending on the sign of $\nu(E)$) to $(q, r)$, which would allow $a^*$ to grow arbitrarily. Thus, we may write and bound $a^*$ in the following way,
\begin{align*}
a^*(\xi,\rho,\mu,\nu,\bar\nu,\tilde\nu)
&= \int_\Omega \epsilon\, d\mu+\int_\Lambda \epsilon\, d\nu+\sup_{(q, r)} \int_\Lambda \bigg(q\cdot \frac{d\bar\nu}{d\nu} +  r : \frac{d\tilde\nu}{d\nu}-H^*(q, r)\bigg)d\nu\\
&\leq \int_\Omega \epsilon\, d\mu+\int_\Lambda \epsilon\, d\nu+\int_\Lambda \sup_{(q, r)} \bigg(q\cdot \frac{d\bar\nu}{d\nu} +  r : \frac{d\tilde\nu}{d\nu}-H^*(q, r)\bigg)d\nu\\
&=\int_\Omega \epsilon\, d\mu+\int_\Lambda \epsilon\, d\nu+\int_\Lambda  H\bigg(\frac{d\bar\nu}{d\nu},\frac{d\tilde\nu}{d\nu}\bigg) d\nu.
\end{align*}
Note that we have used the lower-semicontinuity of $H$.
Equality can be shown by choosing $(q,r)_n$ to be a sequence of continuous functions converging to $\nabla H(\frac{d\bar\nu}{d\nu},\frac{d\tilde\nu}{d\nu})$, then applying the dominated convergence theorem and the fact that $H^*$ is continuous in $\operatorname{dom}(H^*)$.

Finally, we see that the conditions $\xi=0$, $\rho=\rho_0$, $\mu\geq 0$, $\nu \geq 0$ and $(\bar\nu,\tilde\nu)\ll\nu$ are necessary for $a^*<+\infty$. Therefore, the claim is proven.
\end{proof}

\bibliographystyle{acm}
\bibliography{bibfile}

\end{document}